\newcommand{\erase}[1]{}
\theoremstyle{remark}
\newtheorem{remark}{Remark} 
\newtheorem{theorem}{Theorem}[section]
\newtheorem{definition}[theorem]{Definition}
\newtheorem{proposition}[theorem]{Proposition}
\newtheorem{corollary}[theorem]{Corollary}
\newtheorem{lemma}[theorem]{Lemma}
\numberwithin{equation}{section}
\newcommand{\bp}{\begin{pmatrix}}
\newcommand{\ep}{\end{pmatrix}}
\newcommand{\bps}{\begin{smallmatrix}}
\newcommand{\eps}{\end{smallmatrix}}
\def\C{{\mathbb C}}
\def\R{{\mathbb R}}
\def\Z{{\mathbb Z}}
\def \0{{\bf 0}}
\def \1{{\bf 1}}
\def \mf#1#2#3#4{
\xymatrix{{#1}\  \ar@<0.4ex>[r]^{{#2}} & \ {#4}
\ar@<0.4ex>[l]^{{#3}}
}
}
\def \mfs#1#2#3#4{\!
\xymatrix@C=1.5em{{#1} \! \ar@<0.2ex>[r]^{{#2}} & \! {#4}
\ar@<0.2ex>[l]^{{#3}}
}
\!}
\def \mfl#1#2#3#4{
\xymatrix@C=2.6em{{#1}\  \ar@<0.4ex>[r]^{{#2}} &\  {#4}
\ar@<0.2ex>[l]^{{#3}}
}
}
\def \mfss#1#2#3#4{\!
\xymatrix@C=1.5em{{#1} \ar@<0.3ex>[r]^{{#2}} & {#4}
\ar@<0.3ex>[l]^{{#3}}
}
\!}
\newcommand{\deeq}{\mathbin{\hbox{$=$ \lower 1.7pt\rlap{\hskip -8.5pt .}}}} %Editor
\begin{document}
\title{The skew growth functions $N_{M, \mathrm{deg}}(t)$ for the monoid of type $\mathrm{B_{ii}}$ and others.} 

\author{Tadashi Ishibe}
\begin{abstract}
Let $M$ be a positive homogeneously finitely presented monoid ${\langle L \mid R\,\rangle}_{mo}$ that satisfies the cancellation condition and is equipped with the degree map $\deg\!:\!M \to\!\Z_{\ge0}$ defined by assigning to each equivalence class of words the length of the words, and let $P_{M, \deg}(t)\!:=\!\!\sum_{u\!\in\! M}\!t^{\deg(u)}$ be its generating series, called the (\emph{spherical}) \emph{growth function}. If $M$ satisfies the condition that any subset $J$ of $I_0$ ($:=$ the image of the set $L$ in $M$) admits either the least right common multiple $\Delta_{J}$ or no common multiple in $M$, then the inversion function $P_{M, \deg}(t)^{-1}$ is given by the polynomial $\sum_{J \subset I_{0}}(-1)^{\#J} t^{\deg(\Delta_{J})}$, where the summation index $J$ runs over all subsets of $I_0$ whose least right common multiple exists. Since a monoid $M$, in general, may not admit the least right common multiple $\Delta_{J}$ for a given subset $J$ of it, if we attempt to generalize the formula, the consideration to obtain the above formula is invalid. In order to resolve this obstruction, we will examine the set $\mathrm{mcm}(J)$ of minimal common right multiples of $J$. Then, we need to introduce a concept of a \emph{tower of minimal common multiples} of elements of $M$ and denote the set of all the towers in $M$ by $\mathrm{Tmcm}(M)$. By considering the structure of the set $\mathrm{Tmcm}(M)$ of all the towers, K. Saito has proved the \emph{inversion formula}
\[
P_{M,\deg}(t)\cdot N_{M,\deg}(t)=1,
\]
where the second factor in LHS is a suitably signed generating series
\[
N_{M,\deg}(t):= 1 + \sum_{T\in \mathrm{Tmcm}(M)}(-1)^{\#J_1+\cdots+\#J_{n}-n+1}\sum_{\Delta\in \mathrm{mcm}(J_n)} t^{\deg(\Delta)},
\]
called the \emph{skew growth function}.\par
In this article, we will present several explicit calculations of examples of the skew growth functions for the monoid of type $\mathrm{B_{ii}}$ and others whose towers do not stop on the first stage $J_1$. For monoids of this kind, on one hand, we generally find difficulty in calculating the skew growth functions, since, for any tower $T=(I_0, J_1, J_2, \cdots, J_n)$, we need to calculate the set $\mathrm{mcm}(J_{i})$ explicitly for each $J_{i}$. On the other hand, we find difficulty in showing the cancellativity of them, since the pre-existing technique is far from perfect. By improving the technique, we will show the cancellativity of the above examples successfully.
\end{abstract}

%\hmjlogo{}{}{}{}
%\address{Department of Mathematical Sciences, \\
%University of Tokyo, \\
%3-8-1 Komaba Meguro-ku Tokyo, 153-8914 Japan
%}
  
%\date{}

%\vspace{ -0.6cm} Editor

% {\renewcommand{\baselinestretch}{0.9} Editor

%\email{tishibe@ms.u-tokyo.ac.jp}
%  \vspace{0.3cm}

%\subjclass{20F05}

%\keywords{monoid, fundamental group, the word problem, the conjugacy problem}

%2000 Mathematics Subject Classification Numbers:  20F05 \\
%key words and phrases:  monoid, fundamental group, the word problem, the conjugacy problem

\maketitle

\section{Introduction} 
Let $M$ be a positive homogeneously finitely presented monoid ${\langle L \mid R\,\rangle}_{mo}$ that satisfies the cancellation condition (i.e. $axb = ayb$ implies $x = y$ ). Due to the homogeneity, we naturally define a map $\deg\!:\!M \to\!\Z_{\ge0}$ defined by assigning to each equivalence class of words the length of the words. By analogy to the (spherical) growth function for a group, the (\emph{spherical}) \emph{growth function} for a monoid $M$ is defined as $P_{M, \deg}(t)\!:=\!\!\sum_{u\!\in\! M}\!t^{\deg(u)}$ and has been discussed and calculated by several authors (\cite{[A-N]}\cite{[B]}\cite{[Bro]}\cite{[De]}\cite{[I1]}\cite{[S2]}\cite{[S3]}\cite{[S4]}\cite{[S5]}\cite{[Xu]}). In these studies except the \cite{[I1]}, the monoid $M$ satisfies the {\bf condition $\mathcal{L}$} that any subset $J$ of $I_0$ ($:=$ the image of the set $L$ in $M$) admits either the least right common multiple $\Delta_{J}$ or no common multiple in $M$. Then, the inversion function $P_{M, \deg}(t)^{-1}$ is given by the following function, called the \emph{skew growth function}, 
\[
N_{M, \deg}(t):= \sum_{J \subset I_{0}}(-1)^{\#J} t^{\deg(\Delta_{J})},
\]
where the summation index $J$ runs over all subsets of $I_0$ whose least right common multiple exists. Thus, the growth function $P_{M, \deg}(t)$ can be calculated through the skew growth function $N_{M, \deg}(t)$. We remark that, in this case, $N_{M, \deg}(t)$ is a polynomial.\par
In general, a monoid $M$ may not admit the least right common multiple $\Delta_{J}$ for a given subset $J$ of it. Indeed, the monoid, called the type $\mathrm{B_{ii}}$, that was discussed in \cite{[I1]}, does not satisfy the {\bf condition $\mathcal{L}$}. Nevertheless, the author has made a success in calculating the spherical growth function for it. The spherical growth function can be expressed as a rational function $\frac{1-t+t^2}{(1-t)^4}$. Since the form of the numerator polynomial $1-t+t^2$ is suggestive, a generalization of the inversion formula to a wider class of monoids which do not always satisfy the {\bf condition $\mathcal{L}$} is naturally expected. Instead of considering the least common multiple $\Delta_{J}$, we study the set $\mathrm{mcm}(J)$ of \emph{minimal common right multiples} of $J$. However, the datum $\{ \mathrm{mcm}(J) \}_{J\subset I_{0}}$ is still not sufficient to recover the inversion formula, since a subset $J'$ of $\mathrm{mcm}(J)$ in general may have common right multiples. Thus, we need to consider the set $\mathrm{mcm}(J')$ for a subset $J'$ of $\mathrm{mcm}(J)$. Then, we may again need to consider $\mathrm{mcm}(J'')$ for a subset $J''$ of $\mathrm{mcm}(J')$, and so on. Repeating this process, we are naturally led to consider a notion of \emph{tower}: a finite sequence $I_{0} \supset J_{1},$$J_{2}, \ldots, J_{n}$ of subsets of $M$ such that $J_{2}\subset \mathrm{mcm}(J_{1})$, \ldots, $J_{n}\subset \mathrm{mcm}(J_{n-1})$.\par
 In \cite{[S1]}, K. Saito has made a success in generalizing the inversion formula for a rather wider class of monoids. Namely, for a cancellative monoid $M$ equipped with a discrete degree map $\deg\!:\!M \to\!\R_{\ge0}$ (see \cite{[S1]}\S4), he defined the (spherical) growth function $P_{M,\deg}(t)$ for $M$ with respect to $\deg$, and, by considering the set $\mathrm{Tmcm}(M)$ of all towers $T=(I_0, J_1, J_2, \cdots, J_n)$ in $M$ (we do not explain here what the set $I_0$ is about, cf. \cite{[S1]}), he defined the skew growth function 
\[
N_{M,\deg}(t):= 1 + \sum_{T\in \mathrm{Tmcm}(M)}(-1)^{\#J_1+\cdots+\#J_{n}-n+1}\sum_{\Delta\in \mathrm{mcm}(J_{n})} t^{\deg(\Delta)}.
\]
Then, he has shown the inversion formula for $M$ with respect to $\deg$
\[
P_{M,\deg}(t)\cdot N_{M,\deg}(t)=1.
\]
Thus, when we put $h(M, \mathrm{deg}):=\max\{n \mid T=(I_0, J_1, J_2, \cdots, J_n) \in \mathrm{Tmcm}(M)\}$, the inversion formula covers all the cases $\infty \ge h(M, \mathrm{deg})\ge0$.
\par
In this article, for a positive homogeneously finitely presented cancellative monoid $M = {\langle L \mid R\,\rangle}_{mo}$ that do not satisfy the {\bf condition $\mathcal{L}$}, we will present several explicit calculations of examples of the skew growth functions. For a non-abelian monoid $M = {\langle L \mid R\,\rangle}_{mo}$ whose $h(M, \mathrm{deg})$ is equal to $\infty$, one may think that calculations of the skew growth functions are not practicable. However, in \S5, we succeed in the non-trivial calculation for the monoid of type $\mathrm{B_{ii}}$, partially because, for any tower $T=(I_0, J_1, J_2, \cdots, J_n)$, we can calculate the set $\mathrm{mcm}(J_{i})$ explicitly for each $J_{i}$ due to the Lemma 5.5. For the same reason, for the monoids $G^{+}_n$ and $H^{+}_n$ whose $h(M, \mathrm{deg})$ is equal to $2$ and the abelian monoid $M_{\mathrm{abel}}$ whose $h(M, \mathrm{deg})$ is equal to $\infty$, we can calculate the skew growth functions in \S5. As far as we know, for non-abelian monoids that do not satisfy the {\bf condition $\mathcal{L}$}, there are few examples for which the cancellativity of them has been shown, since the pre-existing technique to show the cancellativity is far from perfect (\cite{[G]}\cite{[B-S]}\cite{[Deh1]}\cite{[Deh2]}). For calculations of the skew growth functions, improvement of the technique to show the cancellativity is expected. In \cite{[Deh1]}, \cite{[Deh2]}, if presentation of a positive homogeneously presented monoid satisfies some condition, called \emph{completeness}, the cancellativity of it can be trivially checked. However, in general, the presentation of a monoid is not complete. When the presentation is not complete, in order to obtain a complete presentaion, some procedure, called \emph{completion}, is carried out. From our experience, for most of non-abelian monoids that do not satisfy the {\bf condition $\mathcal{L}$}, these procedures do not finish in finite steps. For monoids of this kind, nothing is discussed in \cite{[Deh1]}, \cite{[Deh2]}. Thus, we attempt improving the technique for this class of monoids. On the other hand, the presentations of the examples $G^{+}_{\mathrm{B_{ii}}}$ (\cite{[I1]}), $G^{+}_{m, n}$ (\cite{[I2]}), $G^{+}_n$ and $H^{+}_n$ (\S3) are not complete and the procedures do not finish in finite steps. Nevertheless, we show the cancellativity of them successfully (\cite{[I1]}, \cite{[I2]}), by improving the technique. In \S4, by the improved technique, we also show the cancellativity. In \cite{[S2]}\cite{[S3]}\cite{[S4]}\cite{[S5]}\cite{[K-T-Y]}, the distribution of the zeroes of the denominator polynomials of the growth functions are investigated from the viewpoint of limit functions (\cite{[S4]}). In \S6, we will explore another viewpoint from an analysis of the three examples. Namely, we will present three examples that suggest the ralationship between the form of the spherical growth function for a positive homogeneously finitely presented cancellative monoid ${\langle L \mid R\,\rangle}_{mo}$ and properties of the corresponding group ${\langle L \mid R\,\rangle}$.
\section{Positive homogeneous presentation}

In this section, we first recall from \cite{[S-I]}, \cite{[B-S]} some basic definitions and notations. Secondly, for a positive homogeneously finitely presented group 
\[
G = \langle L \!\mid \!R\,\rangle, %Editor
\]
we associate a monoid defined by it. We will prepare basic definitions in a positive homogeneously presented monoid. Lastly, we define two operations on the set of subsets of a monoid.\par
First, we recall from \cite{[S-I]} basic definitions on a monoid $M$.\par
\begin{definition}
%Editor
{\it 
1. A monoid $M$ is called \emph{cancellative}, if a relation $AXB = \! AYB$
 for $A,B,X,Y\!\in M$ implies $X = Y$.\par

2. For two elements $u, v$ in $M$, we denote
\[
u|_{l}v
\]
if there exists an element $x$ in $M$ such that $v = ux$. We say that $u$ divides $v$ from the left, or, $v$ is a right-multiple of $u$.\par

3. We say that $G^+$ is \emph{conical}, if $1$ is the only invertible element in $G^+$.}
\end{definition}
 Next, we recall from \cite{[B-S]} some terminologies and concepts. Let $L$ be a finite set. Let $F(L)$ be the free group generated by $L$, and let $L^*$ be the free monoid generated by $L$ inside $F(L)$. We call the elements of $F(L)$ \emph{words} and the elements of $L^*$ \emph{positive words}. The empty word $\varepsilon$ is the identity element of $L^*$.
If two words $A$, $B$ are identical letter by letter, we write $A \equiv B$.
 Let $G = \langle L \!\mid \!R\,\rangle$ be a positive homogeneously presented group (i.e. the set $R$ of relations consists of those of the form $R_i\!=\!S_i$ 
where 
$R_i$ and $S_i$ are positive words of the same length
 ), where $R$ is the set of relations. We often denote the images of the letters and words under the quotient homomorphism %: \\ Editor
%\centerline{ Editor
$\ F(L)\ \longrightarrow\ G$
%}\\ Editor
by the same symbols and the equivalence relation on elements $A$ and $B$ in $G$ is denoted by $A = B$.

Next, we recall from \cite{[S-I]}, \cite{[I1]} some basic concepts 
on positive homogeneously presented monoid.
\ \ \\
\begin{definition}
%Editor
{\it Let 
$G = \langle L \!\mid \!R\,\rangle$
be a positive homogeneously finitely presented group, where $L$ is the set of generators 
(called alphabet) and 
$R$ is the set of relations. 
Then we associate a monoid $G^+ = {\langle L \mid R\,\rangle}_{mo}$ defined as the quotient 
of the free monoid $L^*$ generated by $L$ by the equivalence relation defined as follows:  \par %\\ Editor
$\mathrm{i})$ two words $U$ and $V$ in $L^*$ are called \emph{elementarily 
equivalent} if either $U \equiv V$ or $V$ is obtained from $U$ by substituting 
a substring $R_i$ of $U$ by $S_i$ where $R_i\!=\!S_i$ is a relation of $R$ 
($S_i=R_i$ is also a relation if $R_i=S_i$ is a relation), \par %\\ Editor 
$\mathrm{ii})$ two words $U$ and $V$ in $L^*$ are called \emph{equivalent}, denoted by $U \deeq V$, if there exists a sequence $U\! \equiv \!W_0, W_1,\ldots, W_n\! \equiv \!V $ of words in $L^*$ for $n\!\in\!\Z_{\ge0}$
such that $W_i$ is elementarily equivalent to $W_{i-1}$ for $i=1,\ldots,n$.\par
Due to the homogeneity of the relations, we define a homomorphism:
$$\mathrm{deg}\ : \ G^+\ \longrightarrow\ \Z_{\ge0}$$
by assigning to each equivalence class of words the length of the words.}\par

\end{definition}

\begin{remark}{\it For a positive homogeneously presented group $G = \langle L \!\mid \!R\,\rangle$, the associated monoid $G^+ = {\langle L \mid R\,\rangle}_{mo}$ is conical.}
\end{remark}

\begin{remark}{\it In \cite{[S1]}, for a monoid $M$, the quotient set $M/\!\sim$ is considered, where the equivalence relation $\sim$ on $M$ is defined by putting $u \sim v$\,\,\,$\Leftrightarrow_{def.}$\,\, $u |_{l} v\, \&\, v |_{l} u$. Due to the conicity, if $M = {\langle L \mid R\,\rangle}_{mo}$, then we say that $M/\!\sim \,\,= M$.}
\end{remark}

Lastly, we consider two operations on the set of subsets of a monoid $M$. For a subset $J$ of $M$, we put
\[
\mathrm{cm}_{r}(J) := \{ u \in M \mid j \,|_l \,u ,\, \forall j \in J \},\,\,\,\,\,\,\,\,\,\,\,\,\,\,\,\,\,\,\,\,\,\,\,\,\,\,\,\,
\]
\[
\mathrm{min}_{r}(J) := \{ u \in J \mid \exists v \in J \,\,\mathrm{s.t.}\,\, v \,|_l \,u \Rightarrow v = u \},
\]
and their composition: the set of \emph{minimal common multiples} of the set $J$ by
\[
\mathrm{mcm}(J) := \mathrm{min}_{r}(\mathrm{cm}_{r}(J)).
\]
\section{Generating functions $P_{M, \mathrm{deg}}$ and $N_{M, \mathrm{deg}}$}

In this section, for a positive homogeneous presented cancellative monoid 
\[
M = {\langle L \mid R\,\rangle}_{mo},
\]
 we define a growth function $P_{M, \mathrm{deg}}$ and a skew growth function $N_{M, \mathrm{deg}}$. Next, we recall from \cite{[S1]} the inversion formula for the growth function of $M$.\par
First, we introduce a concept of towers of minimal common multiples in $M$.\par
\begin{definition}
%Editor
{\it A \emph{tower}  of $M$ of height $n \in\Z_{\ge0}$ is a sequence 
\[
T:=(I_0, J_1, J_2, \cdots, J_n) 
\]
of subsets of $M$ satisfying the followings.\\
$\mathrm{i})$  $I_0 :=$ \text{the image of the set} $L$ \text{in} $M$.\\
$\mathrm{ii})$  $\mathrm{mcm}(J_k)\not=\emptyset$   \text{ and we put} $I_k:=\mathrm{mcm}(J_k)$ \text{for}  $k=1, \ldots, n$.\\
$\mathrm{iii})$  $J_k \subset I_{k-1}$  \text{such that}  $1 < \sharp J_k < \infty$  \text{for} $k=1, \ldots, n$. 

Here, we call $I_0$,  $J_k$ and $I_k$, the {\it ground}, the  $k$th {\it stage}  and  the {\it set of minimal common multiples on the $k$th stage} of the tower $T$, respectively. In particular, the set of minimal common multiples on the top stage is denoted by $|T|:= I_{n}$.

The set of all towers of $M$  shall be denote by $\mathrm{Tmcm}(M)$. We put $h(M, \mathrm{deg}):=\max\{\mathrm{height}\,\, \mathrm{of}\,\, T \in \mathrm{Tmcm}(M)\}$.   
}
\end{definition}
\begin{remark} $\mathrm{i})$ It is clear that $M$ is a free monoid if and only if $h(M, \mathrm{deg}) = 0$.\\
$\mathrm{ii})$ All of the monoids discussed in \cite{[A-N]}, \cite{[B-S]}, \cite{[S2]}, \cite{[S3]} have $h(M, \mathrm{deg}) \le 1$.\\
$\mathrm{iii})$ For the following cancellative monoid
\[
\begin{array}{lll}
G^+_{\mathrm{B_{ii}}}:=
\biggl{\langle}
a,b,c\,
\biggl{|}
\begin{array}{lll}cbb=bba,\\
 ab=bc,\\
 ac=ca
  \end{array}
\biggl{\rangle}_{mo} ,
\end{array}
\]

we have $h(G^+_{\mathrm{B_{ii}}}, \mathrm{deg}) = \infty$ (\S5).\\
$\mathrm{iv})$ For the following cancellative monoids
\[
\begin{array}{lll}
G^{+}_{n}:=
\biggl{\langle}
a,b,c\,
\biggl{|}
\begin{array}{cc}cb^{n}=b^{n}a,\\
 ab=bc,\\
 ac=ca
  \end{array}
\biggl{\rangle}_{mo} (n=3, 4, \ldots),
\end{array}
\]

\[
\begin{array}{lll}
H^{+}_{n}:=
\biggl{\langle}
a,b,c\,
\biggl{|}
\begin{array}{cc}b(ab)^{n}ba=cb(ab)^{n}b,\\
 ab=bc,\\
 ac=ca
  \end{array}
\biggl{\rangle}_{mo} (n=1, 2, \ldots),
\end{array}
\]

we have $h(G^{+}_{n}, \mathrm{deg}) = 2$ and $h(H^{+}_{n}, \mathrm{deg}) = 2$ (\S5).\\
$\mathrm{v})$ For the following abelian cancellative monoid
\[
\begin{array}{lll}
M_{\mathrm{abel}, m}:=
\biggl{\langle}
a,b\,
\biggl{|}
\begin{array}{cc}a^{m}=b^{m},\\
 ab=ba\\
  \end{array}
\biggl{\rangle}_{mo} (m = 2, 3, \ldots),
\end{array}
\]

we have $h(M_{\mathrm{abel}, m}, \mathrm{deg}) = \infty$ (\S5).\\
\end{remark}
Secondly, we define a growth function $P_{M, \mathrm{deg}}$ and a skew growth function $N_{M, \mathrm{deg}}$. In the previous section, we have fixed a degree map $\mathrm{deg}$ on $M$. Then, we define the \emph{growth function of the monoid $(M, \mathrm{deg})$} by
\[
P_{M, \mathrm{deg}} := \sum_{u \in M} t^{\deg(u)} = \sum_{d \in \Z_{\ge0}} \#(M_{d})\  t^{d},
\]
where we put $M_{d}:=\{u \in M \mid \deg(u)=d \}$. And we define the \emph{skew growth function of the monoid $(M, \mathrm{deg})$} by
\begin{equation}
N_{M,\deg}(t):= 1 + \sum_{T\in \mathrm{Tmcm}(M)}(-1)^{\#J_1+\cdots+\#J_{n}-n+1}\sum_{\Delta\in |T|} t^{\deg(\Delta)}. 
\end{equation}
\begin{remark}
{\it In the definition $(3.1)$, we can write down the coefficient of the term $t$ directly. Namely, we write 
\[
N_{M,\deg}(t)= 1- \#(I_{0}) t + \sum_{\mathrm{height}\,\, \mathrm{of}\,\, T\,\, \ge1}(-1)^{\#J_1+\cdots+\#J_{n}-n+1}\sum_{\Delta\in |T|} t^{\deg(\Delta)}. 
\]
} 
\end{remark}
\begin{remark}
Therefore, if $M$ is a free monoid of rank $n$, then we have $N_{M,\deg}(t)= 1- nt$.
\end{remark}
\par
Lastly, we recall from  \cite{[S1]} the inversion formula for the growth function of $(M,\deg)$.
\begin{theorem}
{\it We have the inversion formula
\[
P_{M,\deg}(t)\cdot N_{M,\deg}(t)=1. 
\]
}
\end{theorem}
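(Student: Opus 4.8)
The plan is to prove the identity not for the two series directly but through an auxiliary family of generating series attached to common right-multiples, arranging matters so that a single elementary inclusion--exclusion, applied recursively, does all the work (in particular, no normal-form theory for $M$ is needed). For a finite subset $J \subset M$ put
\[
Q(J)\ :=\ \sum_{u\in \mathrm{cm}_{r}(J)}t^{\deg(u)}\ \in\ \Z[[t]],
\]
so that $Q(\emptyset)=P_{M,\deg}(t)$, while, since cancellativity makes $v\mapsto jv$ a degree-shifting bijection of $M$ onto the set of right-multiples of a fixed $j$, one has $Q(\{j\})=t^{\deg(j)}\,P_{M,\deg}(t)$. First I would prove the \emph{crosscut identity} $\sum_{J\subset I_{0}}(-1)^{\#J}Q(J)=1$. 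For $u\in M$ let $L(u):=\{\,g\in I_{0}\mid g\,|_{l}\,u\,\}$; positivity and homogeneity of the presentation (every generator has positive degree, and every non-identity element is a right-multiple of some generator) force $L(u)=\emptyset$ exactly when $u=1$, so $\sum_{J\subset L(u)}(-1)^{\#J}=(1-1)^{\#L(u)}$ equals $1$ when $u=1$ and vanishes otherwise. Multiplying by $t^{\deg(u)}$, summing over $u\in M$ and interchanging the two summations --- legitimate since $I_{0}$ is finite --- rewrites the left side as $\sum_{J\subset I_{0}}(-1)^{\#J}\sum_{u\in\mathrm{cm}_{r}(J)}t^{\deg(u)}$, because $J\subset L(u)\iff u\in\mathrm{cm}_{r}(J)$.

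The second step is the \emph{multiplicity recursion}: for every finite $J\subset M$,
\[
Q(J)\ =\ \sum_{\emptyset\neq S\subset \mathrm{mcm}(J)}(-1)^{\#S-1}\,Q(S).
\]
Since $\deg$ is discrete, every $u\in M$ has only finitely many left-divisors; hence inside $\mathrm{cm}_{r}(J)$ every element is a right-multiple of some element of $\mathrm{mcm}(J)=\mathrm{min}_{r}(\mathrm{cm}_{r}(J))$, so $\mathrm{cm}_{r}(J)$ is the union of the sets of right-multiples of the various $\Delta\in\mathrm{mcm}(J)$, the intersection of the family over a finite $S\subset\mathrm{mcm}(J)$ being exactly $\mathrm{cm}_{r}(S)$. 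Inclusion--exclusion applied to the indicator function of $\mathrm{cm}_{r}(J)$, followed by summation of $t^{\deg(u)}$, yields the recursion; the sum on the right is $t$-adically summable since $\mathrm{mcm}(J)$ has only finitely many elements of each degree.

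Then I would iterate: substitute the recursion into the crosscut identity, and keep substituting into every summand $Q(J)$ with $\#J>1$, leaving the \emph{terminal} summands $Q(\emptyset)=P_{M,\deg}(t)$ and $Q(\{\Delta\})=t^{\deg(\Delta)}P_{M,\deg}(t)$ untouched. The terminal branches that arise are in bijection with the towers $T=(I_{0},J_{1},\ldots,J_{n})\in\mathrm{Tmcm}(M)$ each endowed with a choice of $\Delta\in|T|$, plus one extra branch coming from $Q(\emptyset)$ that accounts for the leading summand $1$ of $N_{M,\deg}(t)$; a short sign bookkeeping identifies the coefficient carried along the branch of a pair $(T,\Delta)$ with precisely $(-1)^{\#J_{1}+\cdots+\#J_{n}-n+1}$. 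Collecting the coefficients of $P_{M,\deg}(t)$ over all branches therefore reconstitutes $N_{M,\deg}(t)$, which gives $P_{M,\deg}(t)\cdot N_{M,\deg}(t)=1$.

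The step I expect to be the main obstacle is justifying this rearrangement when $h(M,\deg)=\infty$ and the sets $\mathrm{mcm}(J)$ are infinite, so that the substitution tree is infinite in both depth and width. The decisive estimate is that $I_{0}$ and every $\mathrm{mcm}(J)$ are antichains for $|_{l}$, and that for an antichain $J$ with $\#J>1$ every $\Delta\in\mathrm{mcm}(J)$ satisfies $\deg(\Delta)>\max_{j\in J}\deg(j)$ --- such a $\Delta$ cannot coincide with a member of $J$, for then it would left-divide another, distinct member of the antichain. Consequently each substitution step strictly raises $\min_{j\in J}\deg(j)$, so the non-terminal summands surviving after $k$ rounds all lie in $t^{k+1}\Z[[t]]$; combined with the finiteness of each graded piece $M_{d}$ (which also makes the partial series $N_{M,\deg}^{(k)}$ of towers of height $\le k$ well defined and $t$-adically convergent to $N_{M,\deg}$), this shows that the identity $P_{M,\deg}(t)\cdot N_{M,\deg}(t)=1$ holds modulo $t^{k+1}$ for every $k$, hence exactly. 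Making this convergence precise --- the combinatorics of the towers themselves being purely formal once the recursion is in hand --- is where the real care is needed.
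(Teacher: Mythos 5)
The paper does not actually prove Theorem 3.5: it is explicitly \emph{recalled} from \cite{[S1]}, so there is no in-paper argument to compare yours against line by line. Judged on its own, your proposal is essentially correct and is the natural way to prove the formula. The three pillars all hold in this setting: the crosscut identity is valid because $I_0$ is finite and every $u\neq 1$ is left-divisible by some generator; the recursion for $Q(J)$ is valid because conicity plus finiteness of each graded piece $M_d$ makes left-divisibility well-founded, so every element of $\mathrm{cm}_r(J)$ sits above some element of $\mathrm{mcm}(J)$ and the set $D(u)$ of such minimal divisors of a fixed $u$ is finite; and cancellativity enters exactly once, to give $Q(\{\Delta\})=t^{\deg(\Delta)}P_{M,\deg}(t)$. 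Your sign bookkeeping reproduces the paper's exponent $\#J_1+\cdots+\#J_n-n+1$ (including the height-$0$ tower, which yields the $-\#(I_0)t$ term of Remark 3.3), and your key estimate --- that $\mathrm{mcm}(J)$ of an antichain $J$ with $\#J>1$ lies strictly above $J$ in degree, so stage $k$ of any tower consists of elements of degree at least $k$ --- is correct and is precisely what makes both $N_{M,\deg}$ and the infinite rearrangement converge $t$-adically. Two small points deserve explicit mention in a final write-up: the recursion sum should be restricted to \emph{finite} nonempty $S\subset\mathrm{mcm}(J)$ (infinite $S$ have $\mathrm{cm}_r(S)=\emptyset$ since all members would have to divide a single element, and $(-1)^{\#S-1}$ is otherwise meaningless), which is consistent with the paper's requirement $1<\#J_k<\infty$; and the local finiteness of the full branch sum in each degree $d$ follows because every stage of a contributing tower consists of left-divisors of the terminal $\Delta$ with $\deg(\Delta)\le d$, of which there are only finitely many. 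With those clarifications the argument is complete.
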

\section{Cancellativity of $G^{+}_{n}$ and $H^{+}_{n}$}
In this section, for a preparation for calculations of the skew growth functions for the monoids $G^{+}_{n}$ and $H^{+}_{n}$ in \S5, we prove the cancellativity of them.\par 
First, we show the cancellativity of $G^{+}_{n}$.
\ \ \\
\begin{theorem}
{\it The monoid $G_{n}^{+}$ is a cancellative monoid. }
 \par 
\end{theorem}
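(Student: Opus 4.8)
The plan is to establish cancellativity of $G_n^+$ by the classical method of constructing a well-behaved normal form together with explicit left- and right-reduction procedures, in the spirit of Garside-theoretic arguments adapted (as the author indicates) to the case where the completion does not terminate. Write $G_n^+ = {\langle a,b,c \mid cb^n=b^na,\ ab=bc,\ ac=ca\rangle}_{mo}$. The first step is to analyse the combinatorics of the defining relations: all three are length-preserving and, crucially, the letter $a$ behaves like a ``mover'' that can be pushed to the right past $b$ (turning $ab$ into $bc$), past $c$ (commuting), and past a block $b^n$ (turning $cb^n$ into $b^na$, i.e. reading the first relation as $c\,b^n = b^n\,a$). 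I would use these to argue that every element of $G_n^+$ has a representative of the form $w\cdot a^k$ where $w$ is a positive word in $b$ and $c$ only and $k\ge 0$, and then seek a canonical form for the $\{b,c\}$-part. In fact the subrelations available among $b,c$ alone are empty, so the $\{b,c\}$-subword is only constrained through interaction with trailing $a$'s via $cb^n=b^na$ and $ab=bc$; I would make this precise and extract a uniqueness statement.

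Concretely, I would introduce a rewriting system oriented so that $a$ migrates rightward: $ab\to bc$, $ac\to ca$, and $c b^n \to b^n a$, and check local confluence of the overlaps that do terminate, isolating the single ``bad'' overlap (the one responsible for non-completeness) — this is typically the overlap between $cb^n\to b^na$ and $ab\to bc$, producing an infinite descending family. The key technical step is then to show directly, by an induction on word length, that if two positive words $U,V$ are equivalent then their rightward-migrated forms agree; equivalently, to build a set-theoretic action of the generators $a,b,c$ on a combinatorial model set $\mathcal N$ of normal forms (e.g. tuples recording the $\{b,c\}$-pattern together with a non-negative ``$a$-exponent'' and bookkeeping indices modulo $n$) such that the three defining relations act as the identity on $\mathcal N$ and such that the monoid action is free on the orbit of the empty word. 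Once such a faithful action is in hand, cancellativity follows: both left and right cancellation reduce to injectivity of the induced maps on $\mathcal N$, which is visible from the explicit formulas.

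For the actual cancellation statement I would then argue as follows. Left cancellativity ($xU=xV\Rightarrow U=V$) is immediate from the faithful left action once it is set up. For right cancellativity ($Ux=Vx\Rightarrow U=V$) I would exploit the evident anti-automorphism coming from the fact that the relation set $R$ is almost symmetric — reversing words sends $cb^n=b^na$ to $b^nb^{n-1}\cdots$ — hmm, more honestly the reversed presentation is $b^nc=ab^n$, $ba=cb$, $ca=ac$, which is again a positive homogeneous presentation of an isomorphic-by-reversal monoid, so a parallel rightward-migration analysis (now pushing $a$ to the left) yields right cancellativity by the same mechanism. The main obstacle, as the author flags in the introduction, is precisely that the naive completion of the presentation does not terminate, so one cannot simply quote a ``complete presentation implies cancellative'' black box; the work is in taming the infinite overlap, which I expect to handle by packaging the offending reductions into the closed-form action on $\mathcal N$ rather than trying to confluence them termwise. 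I would also need to verify that $G_n^+$ embeds in, or at least injects suitably into, the data recorded by $\mathcal N$ — i.e. that $\mathcal N$ is neither too coarse (losing injectivity) nor realised by a non-faithful action — and this verification, together with checking the commutation identities $[a,c]=1$ interact correctly with the modular bookkeeping forced by the exponent $n$ in $cb^n=b^na$, is where the bulk of the routine-but-delicate computation will lie.
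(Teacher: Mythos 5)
Your proposal is a strategy outline rather than a proof: every step that actually carries the logical weight is deferred. You never construct the model set $\mathcal N$, never write down the explicit action of $a,b,c$ on it, never verify that the three relations act trivially, and never prove faithfulness or injectivity of the generator maps --- and you say yourself that this is ``where the bulk of the routine-but-delicate computation will lie.'' That computation \emph{is} the theorem. The claim that the non-terminating overlap between $cb^n\to b^na$ and $ab\to bc$ can be ``packaged into a closed-form action'' is precisely the difficulty the author highlights, and asserting that it can be tamed is not the same as taming it. Even the preliminary normal-form claim is shakier than you suggest: while every element does admit a representative $w\cdot a^k$ with $w\in\{b,c\}^*$, such a representative is far from unique (e.g.\ $b^{n+1}a \deeq ab^{n+1} \deeq bcb^{n}$), so a canonical choice must be specified and its uniqueness proved, which again is the whole problem. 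There is also a small logical subtlety you gloss over: a faithful action alone does not give left cancellativity; you need each generator to act injectively on $\mathcal N$ \emph{and} faithfulness of the whole action, and both must be checked against the infinitely many consequences of the relations, not just the three defining ones.

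For comparison, the paper does not use normal forms at all. It follows the Garside/Brieskorn--Saito route: a division-type proposition (Proposition 4.3) listing, for each pair of initial letters (or initial blocks $b^ha$, $b^{h+1}$, $b^hc$), exactly what an equation $vX\deeq wY$ forces on $X$ and $Y$, proved by a triple induction on word length, chain length of the transformation, and the block parameter $h$. Left cancellativity is the case $v=w$, and right cancellativity is then deduced from left via the anti-automorphism $W\mapsto\sigma(\mathrm{rev}(W))$ with $\sigma=(a\,c)$ --- note that composing reversal with this letter swap sends the relation set to itself, so one gets an anti-automorphism of $G_n^+$ itself rather than merely an isomorphism onto a reversed monoid, which is cleaner than the parallel-analysis fallback you describe. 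If you want to salvage your approach, the honest path is to prove the analogue of Proposition 4.3 anyway (the case analysis of $aX\deeq bY$, $bX\deeq cY$, etc.), at which point the normal-form machinery becomes unnecessary.
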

\begin{proof} First, we remark the following.\par
\begin{proposition}
%\label{mythm}
%\it Editor
{\it The left cancellativity on $G_{n}^{+}$ implies the right cancellativity.} 
 \par 
\end{proposition}
\begin{proof} Consider a map
 $\varphi:G^{+}_{n}\rightarrow G^{+}_{n}$,
 $W\mapsto \varphi(W):=\sigma$$(rev(W))$, where $\sigma$ is a
 permutation $\big(^{\,a, b, c}_{\,
c, b, a}\big)$ and $rev(W)$
 is the reverse of the word $W=x_1 x_2 \cdots x_k$ ($x_i$ is a letter) given by the word  $x_k  \cdots x_2 x_1$. In
 view of the defining relation of $G^+_{n}$, $\varphi$ is well-defined and is an anti-isomorphism. If $\beta \alpha \deeq \!\! \gamma \alpha$, then $\varphi(\beta \alpha) \deeq \! \varphi(\gamma \alpha)$, i.e.,  $\varphi(\alpha) \varphi(\beta) \deeq \varphi(\alpha)\varphi(\gamma)$. Using the left cancellativity, we obtain $\varphi(\beta) \deeq \! \varphi(\gamma)$ and, hence, $\beta \deeq \! \gamma $.

\end{proof}
The following is sufficient to show the left cancellativity on $G^+_{n}$.

\begin{proposition}
%\label{mythm}
%\it Editor
{\it Let $Y$ be a positive word in $G^+_{n}$ of length $r\in \Z_{\ge0}$ and let $X^{(h)}$ be a positive word in $G^+_{n}$ of length $r-h \in \{\,r-n+1, \ldots, r \,\}$.
\smallskip
\\
{\rm (i)}\, If $vX^{(0)} \deeq \! vY$ for some $v \in \{a, b, c \}$, then $X^{(0)} \deeq \! Y$.\\
{\rm (ii)}\, If $a X^{(0)} \deeq b Y$, then $X^{(0)} \deeq b Z$, $Y \deeq c Z$ for some positive word $Z$.\\
{\rm (iii)}\, If $a X^{(0)} \deeq c Y$, then $X^{(0)} \deeq c Z$, $Y \deeq a Z$ for some positive word $Z$.\\
{\rm ({iv}\,-\,$0$)}\, If $b X^{(0)} \deeq c Y$, then there exist an integer $k$ $(0 \leq k \leq r-n)$ and a positive word $Z$ such that $X^{(0)} \deeq c^k b^{n-1}a \cdot Z$ and $Y \deeq  a^k b^n \cdot Z$.\\
{\rm (iv\,-\,$1$\,-\,$a$)}\, There does not exist words $X^{(1)}$ and $Y$ that satisfy an equation $ba \cdot X^{(1)} \deeq c Y$. \\
{\rm (iv\,-\,$1$\,-\,$b$)}\, If $bb \cdot X^{(1)} \deeq c Y$, then $X^{(1)} \deeq b^{n-2}a \cdot Z$ and $Y \deeq b^n \cdot Z$ for some positive word $Z$. \\
{\rm (iv\,-\,$1$\,-\,$c$)}\, If $bc \cdot X^{(1)} \deeq c Y$, then there exist an integer $k$ $(0 \leq k \leq r-n-1)$ and a positive word $Z$ such that $X^{(1)} \deeq c^{k}b^{n-1}a \cdot Z$ and $Y \deeq a^{k-1} b^n \cdot Z$. \\
If $n \ge 4$, then, for $2 \leq h \leq n-2$, we have to prepare the following propositions {\rm (iv\,-\,$h$\,-\,$a$)}\, {\rm (iv\,-\,$h$\,-\,$b$)}\ and {\rm (iv\,-\,$h$\,-\,$c$)}\:\\
{\rm (iv\,-\,$h$\,-\,$a$)}\, There does not exist positive words $X^{(h)}$ and $Y$ that satisfy an equation $b^{h}a \cdot X^{(h)} \deeq c Y$. \\
{\rm (iv\,-\,$h$\,-\,$b$)}\, If $b^{h+1} \cdot X^{(h)} \deeq c Y$, then $X^{(h)} \deeq b^{n-h-1}a \cdot Z$ and $Y \deeq b^n \cdot Z$ for some positive word $Z$.\\
{\rm (iv\,-\,$h$\,-\,$c$)}\, There does not exist positive words $X^{(h)}$ and $Y$ that satisfy an equation $b^{h}c \cdot X^{(h)} \deeq c Y$. \\
{\rm (iv\,-\,($n-1$)\,-\,$a$)}\, If $b^{n-1}a \cdot X^{(n-1)} \deeq c Y$, then $X^{(n-1)} \deeq ba \cdot Z$ and $Y \deeq b^n c \cdot Z$ for some positive word $Z$. \\
{\rm (iv\,-\,($n-1$)\,-\,$b$)}\, If $b^n \cdot X^{(n-1)} \deeq c Y$, then $X^{(n-1)} \deeq a Z$ and $Y \deeq b^n \cdot Z$ for some positive word $Z$. \\
{\rm (iv\,-\,($n-1$)\,-\,$c$)}\, There does not exist positive words $X^{(n-1)}$ and $Y$ that satisfy an equation $b^{n-1}c \cdot X^{(n-1)} \deeq c Y$.} \\
 \par 
\end{proposition}
\begin{proof} We will show the general theorem, by refering to the triple induction (see \cite{[I2]}). The theorem for a positive word $Y$ of word-length $r$ and $X^{(h)}$ of word-length $r-h \in \{\,r-n+1, \ldots, r \,\} $ will be refered to as $\mathrm{H}_{r, h}$. It is easy to show that, for $r = 0, 1$, $\mathrm{H}_{r, h}$ is true. If a positive word $U_1$ is transformed into $U_2$ by using $t$ single applications of the defining relations of $G^+_{n}$, then the whole transformation will be said to be of \emph{chain-length} $t$. For induction hypothesis, we assume\\
$(\mathrm{A})$\,\,$\mathrm{H}_{s, h}$ is true for $s = 0, \ldots, r$ and arbitrary $h$ for transformations of all chain-lengths, \\
and\\
$(\mathrm{B})$\,\,$\mathrm{H}_{r+1, h}$ is true for $0$ $\leq$ $h$ $\leq$ $n-1$ for all chain-lengths $\leq$ $t$.\\
We will show the theorem $\mathrm{H}_{r+1, h}$ for chain-lengths $t+1$. For the sake of simplicity, we devide the proof into two steps.\\
{\bf Step 1}.\,$\mathrm{H}_{r+1, h}$ for $h = 0$\\
Let $X, Y$ be of word-length $r+1$, and let
\[
v_1 X \deeq v_2 W_2 \deeq \cdots \,\deeq v_{t+1} W_{t+1} \deeq v_{t+2} Y
\]
be a sequence of single transformations of $t+1$ steps, where $v_1, \ldots, v_{t+2} \in \{\,a, b, c\, \}$ and $W_2, \ldots , W_{t+1}$ are positive words of length $r+1$. By the assumption $t > 1$, there exists an index $\tau \in \{\,2, \ldots, t+1\,\}$ such that we can decompose the sequence into two steps
\[
v_1 X \deeq v_{\tau} W_{\tau} \deeq v_{t+2} Y,
\]
in which each step satisfies the induction hypothesis $(\mathrm{B})$.\par
If there exists $\tau$ such that $v_{\tau}$ is equal to either to $v_1$ or $v_{t+2}$, then by induction hypothesis, $W_{\tau}$ is equivalent either to $X$ or to $Y$. Hence, we obtain the statement for the $v_1 X \deeq v_{t+2} Y$. Thus, we assume from now on $v_{\tau} \not= v_1, v_{t+2}$ for $1 < \tau \leq t+1$.\par
Suppose $v_1 = v_{t+2}$. If there exists $\tau$ such that $(\,v_1 = v_{t+2}, v_{\tau}\, ) \not= (\,b, c\, ), (\,c, b\, )$, then each of the equivalences says the existence of $\alpha, \beta \in \{\,a, b, c\, \}$ and positive words $Z_1, Z_2$ such that $X \deeq \alpha Z_1$, $W_{\tau} \deeq \beta Z_1 \deeq \beta Z_2$ and $Y \deeq \alpha Z_2$. Applying the induction hypothesis $(\mathrm{A})$ to $\beta Z_1 \deeq \beta Z_2$, we get $Z_1 \deeq Z_2$. Hence, we obtain the statement $X \deeq \alpha Z_1 \deeq \alpha Z_2 \deeq Y$. Thus, we exclude these cases from our considerations. Next, we consider the case $(\,v_1 = v_{t+2}, v_{\tau}\, ) = (b, c )$. However, because of the above consideration, we say $v_2 = \cdots = v_{t+1} = c\,$. Hence, we consider the following case
\[
b X \deeq c W_1 \deeq \cdots \deeq c W_{t+1} \deeq b Y.
\]
Applying the induction hypothesis $(\mathrm{B})$ to each step, we say that there exist positive words $Z_3$ and $Z_4$ such that
\[
X \deeq b^{n-1}a \cdot Z_3,\,\, W_1 \deeq b^n \cdot Z_3,\,\,\,\,
\]
\[
W_{t+1} \deeq b^n \cdot Z_4,\,\, Y \deeq  b^{n-1}a \cdot Z_4.
\]
Since an equation $W_1 \deeq W_{t+1}$ holds, we say that 
\[
b^n \cdot Z_3 \deeq b^n \cdot Z_4.
\]
By induction hypothesis, we have $X \deeq Y$.\\
In the case of $(\,v_1 = v_{t+2}, v_{\tau}\, ) = (c, b)$, we can prove the statement in a similar manner.\\
Suppose $v_1 \not= v_{t+2}$. We consider the following three cases.
\par
Case 1\,: $(\,v_1, v_{\tau}, v_{t+2}\, ) = (\,a, b, c\, )$\\
Because of the above consideration, we consider the case $\tau = t+1$, namely
\[
a X \deeq b W_{t+1} \deeq c Y.
\]
Applying the induction hypothesis to each step, we say that there exist positive words $Z_1$ and $Z_{2}$ such that
\[
X \deeq b Z_1,\,\, W_{t+1} \deeq c Z_1,\,\,\,\,\,\,\,\,\,\,\,\,\,\,
\]
\[
\,\,\,\,\,\,\,\,\,W_{t+1} \deeq b^{n-1}a \cdot Z_{2},\,\, Y \deeq b^n \cdot Z_{2}.
\]
Thus, we say that $c \cdot Z_1 \deeq  b^{n-1}a \cdot Z_{2}$. Applying the induction hypothesis $(\mathrm{A})$ to this equation, we say that there exists a positive word $Z_3$ such that
\[
 Z_1 \deeq b^{n}c \cdot Z_3,\,\, Z_2 \deeq ba \cdot Z_3.
\]
Hence, we have $X \deeq c b^{n+1} \cdot Z_3$ and  $Y \deeq a b^{n+1} \cdot Z_3$.\par
Case 2\,: $(\,v_1, v_{\tau}, v_{t+2}\, ) = (\,a, c, b\, )$\\
We consider the case $\tau = t+1$, namely
\[
a X \deeq c W_{t+1} \deeq b Y.
\]
Applying the induction hypothesis to each step, we say that there exist positive words $Z_1$ and $Z_{2}$ such that
\[
X \deeq c Z_1,\,\, W_{t+1} \deeq a Z_1,\,\,\,\,\,\,\,\,\,\,\,\,\,\,
\]
\[
\,\,\,\,\,\,\,\,W_{t+1} \deeq b^{n} \cdot Z_{2},\,\, Y \deeq b^{n-1}a \cdot Z_{2}.
\]
Thus, we say that $a Z_1 \deeq  b^{n} \cdot Z_{2}$. Applying the induction hypothesis $(\mathrm{A})$ to this equation, we say that there exists a positive word $Z_3$ such that
\[
 Z_1 \deeq b^{n+1} \cdot Z_3,\,\, Z_2 \deeq ba \cdot Z_3.
\]
Hence, we have $X \deeq b \cdot b^{n}c \cdot Z_3$ and  $Y \deeq c \cdot b^{n}c \cdot Z_3$.\par
Case 3\,: $(\,v_1, v_{\tau}, v_{t+2}\, ) = (\,b, a, c\, )$\\
Then, we consider the following case 
\[
b X \deeq a W_{\tau} \deeq  c Y.
\]
Applying the induction hypothesis to each step, we say that there exist positive words $Z_1$ and $Z_2$ such that
\[
X \deeq c Z_1,\,\, W_{\tau} \deeq b Z_1,
\]
\[
W_{\tau} \deeq c Z_2,\,\, Y \deeq a Z_2.
\]
Moreover, we say that there exist a positive word $Z_3$ and an integer $k \in \Z_{\ge0}$ such that
\[
Z_1 \deeq c^{k} b^{n-1}a \cdot Z_3,\,\,Z_2 \deeq a^{k} b^{n} \cdot Z_3.
\]
Thus, we have
\[
X \deeq c^{k+1}b^{n-1}a \cdot Z_3,\,\,Y \deeq a^{k+1} b^{n} \cdot Z_3.
\]
\\
{\bf Step 2}.\,$\mathrm{H}_{r+1, h}$ for $0 \leq h \leq n-1$\\
We will show the general theorem $\mathrm{H}_{r+1, h}$ by induction on $h$. The case $h = 0$ is proved in Step 1. First, we show the case $h = 1$. Let $X^{(1)}$ be of word-length $r$, and let $Y$ be of word-length $r+1$. We consider a sequence of single transformations of $t+1$ steps
\[
V \cdot X^{(1)} \deeq \cdots \,\deeq c Y,
\]
where $V$ is a positive word of length $2$. We discuss three cases.  
\par
Case 1\,:\,$V = ba$.\\
We consider the following case
\begin{equation}
 ba \cdot X^{(1)} \deeq \cdots \,\deeq c Y.\,\,\,\,
\end{equation}
By the result of Step 1, we say that there exist a positive word $Z_1$ and an integer $k \in \Z_{\ge0}$ such that
\[
a X^{(1)} \deeq c^{k}b^{n-1}a \cdot Z_1,\,\, Y  \deeq a^{k}b^n \cdot Z_1. 
\]
Applying the induction hypothesis $(\mathrm{A})$, we say that there exists a positive word $Z_2$ such that
\[
 X^{(1)} \deeq c^{k} \cdot Z_2,\,\, b^{n-1}a \cdot Z_1  \deeq a Z_2. 
\]
Moreover, we say that there exists a positive word $Z_3$ such that
\[
b^{n-2}a \cdot Z_1 \deeq c Z_3,\,\,Z_2 \deeq b Z_3.
\]
By the induction hypothesis, we have a contradiction. Hence, there does not exist positive words $X^{(1)}$ and $Y$ that satisfy the equation $(4.1)$.
\par
Case 2\,:\,$V = bb$.\\
We consider the following case
\[
 bb \cdot X^{(1)} \deeq V_2 \cdot W_2 \deeq \cdots \deeq V_{t+1} \cdot W_{t+1} \,\deeq c Y,\,\,\,\,
\]
where $V_2$ and $V_{t+1}$ are positive words. It is enough to discuss the case $(\,V_2, V_{t+1}\,) = (\,bcb^{n}, ac\,)$. Applying the induction hypothesis $(\mathrm{A})$ to the equation 
\begin{equation}
bcb^{n} \cdot W_2 \deeq ac \cdot W_{t+1}, 
\end{equation}
we say that there exists a positive word $Z_1$ such that $c W_{t+1} \deeq  b Z_1$. Applying the induction hypothesis, we say that there exist a positive word $Z_2$ and an integer $k \in \Z_{\ge0}$ such that
\begin{equation}
W_{t+1} \deeq a^{k} b^{n} \cdot Z_2,\,\, Z_1 \deeq c^k b^{n-1}a \cdot Z_2. 
\end{equation}
Applying $(4.3)$ to the equation $(4.2)$, we have
\[
bcb^{n} \cdot W_2 \deeq ac \cdot a^{k} b^{n} \cdot Z_2. 
\]
Moreover, we say
\begin{equation}
b^{n} \cdot W_2 \deeq c^{k} b^{n-1}a \cdot Z_2. 
\end{equation}
We consider the following two cases.
\par
Case 2 -- 1\,:\,\,$k = 0$\\
There exists a positive word $Z_3$ such that
\[
W_2 \deeq c Z_3,\,\, Z_2 \deeq b Z_3. 
\]
Thus, we have
\[
\,\,\,X^{(1)} \deeq b^{n-1}a \cdot c Z_3 \deeq b^{n-2}a \cdot ba \cdot Z_3, 
\]
\[
Y \deeq ab^{n}b \cdot Z_3 \deeq b^{n}\cdot ba Z_3.\,\,\,\,\,\,\,\,\,\,\,\,\,\,\,\,\,\,\,\,\,
\]
\par
Case 2 -- 2\,:\,\,$k\ge1$\\
Applying the induction hypothesis to the equation $(4.4)$, we say that there exists a positive word $Z_3$ such that
\[
W_2  \deeq a^{k} \cdot Z_3. 
\]
Thus, we consider the equation $b^n \cdot Z_3 \deeq b^{n-1}a \cdot Z_2$.
We say that there exists a positive word $Z_4$ such that
\[
Z_2 \deeq b Z_4,\,\, Z_3 \deeq c Z_4.
\]
Thus, we have
\[
X^{(1)} \deeq b^{n-1}a \cdot a^{k} c \cdot Z_3 \deeq b^{n-2}a \cdot ba^{k+1} Z_3, 
\]
\[
Y \deeq a a^{k}b^{n}b \cdot Z_3 \deeq b^{n}\cdot ba^{k+1} Z_3.\,\,\,\,\,\,\,\,\,\,\,\,\,\,\,\,\,\,\,\,\,\,\,\,\,
\]
\par
Case 3\,:\,$V = bc$.\\
Then, we consider the following case 
\[
bc \cdot X^{(1)} \deeq \cdots \deeq  c Y.
\]
By the induction hypothesis, we say that there exist a positive word $Z_1$ and an integer $k \in \Z_{\ge0}$ such that
\[
c X^{(1)} \deeq c^k b^{n-1}a \cdot Z_1,\,\, Y \deeq a^k b^n \cdot Z_1.
\]
We consider the following two cases.
\par
Case 3 -- 1\,:\,\,$k = 0$\\
By the induction hypothesis, we say that there exists a positive word $Z_2$ such that
\[
X^{(1)} \deeq b^{n} c \cdot Z_2,\,\, Z_1 \deeq ba \cdot Z_2.
\]
Thus, we have
\[
X^{(1)} \deeq b^{n-1} a \cdot b Z_2,\,\,Y \deeq b^n ba \cdot Z_2 \deeq a b^n \cdot bZ_2.
\]
\par
Case 3 -- 2\,:\,\,$k \ge 1$\\
Then, we have 
\[
X^{(1)} \deeq c^{k-1} b^{n-1}a \cdot Z_1,\,\,Y \deeq a^k b^n \cdot Z_1.
\]
\par
Second, when $n \ge 4$, we show the theorem $\mathrm{H}_{r+1, h}$$\,(2 \leq h \leq n-2)$ by induction on $h$. We assume $h = 1, 2, \ldots , j \,(\,j \leq n-3\,)$. The case $h = 1$ has been proved. Let $X^{(j+1)}$ be of word-length $r-j$, and let $Y$ be of word-length $r+1$. We consider a sequence of single transformations of $t+1$ steps
\begin{equation}
V \cdot X^{(j+1)} \deeq \cdots \,\deeq c Y,
\end{equation}
where $V$ is a positive word of length $j+2$. We discuss the following three cases.  
\par
Case 1\,:\,$V \deeq bb^{j}a$.\\
Applying the induction hypothesis, we say that there exists a positive word $Z_1$ such that
\[
a X^{(j+1)} \deeq b^{n-j-1}a \cdot Z_1,\,\,Y \deeq b^{n} \cdot Z_1. 
\]
By the induction hypothesis, we say that there exists a positive word $Z_2$ such that
\[
X^{(j+1)} \deeq b Z_2,\,\, b^{n-j-2}a \cdot Z_1 \deeq c Z_2.
\]
By the induction hypothesis, we have a contradiction. Hence, there does not exist positive words $X^{(j+1)}$ and $Y$ that satisfy the equation $(4.5)$.
\par
Case 2\,:\,$V \deeq bb^{j+1}$.\\
Applying the induction hypothesis, we say that there exists a positive word $Z_1$ such that
\[
b X^{(j+1)} \deeq b^{n-j-1}a \cdot Z_1,\,\,Y \deeq b^{n} \cdot Z_1. 
\]
Thus, we have $X^{(j+1)} \deeq b^{n-j-2}a \cdot Z_1$.
\par
Case 3\,:\,$V \deeq bb^{j}c$.\\
Applying the induction hypothesis, we say that there exists a positive word $Z_1$ such that
\[
c X^{(j+1)} \deeq b^{n-j-1}a \cdot Z_1,\,\,Y \deeq b^{n} \cdot Z_1. 
\]
By the induction hypothesis, we have a contradiction. Hence, there does not exist positive words $X^{(j+1)}$ and $Y$ that satisfy the equation $(4.5)$.
\par
Lastly, we show the theorem $\mathrm{H}_{r+1, n-1}$. Let $X^{(n-1)}$ be of word-length $r-n+2$, and let $Y$ be of word-length $r+1$. We consider a sequence of single transformations of $t+1$ steps
\begin{equation}
V \cdot X^{(n-1)} \deeq \cdots \,\deeq c Y,
\end{equation}
where $V$ is a positive word of length $n$. We discuss the following three cases.  
\par
Case 1\,:\,$V \deeq b^{n-1}a$.\\
By the above result, we say that there exists a positive word $Z_1$ such that
\[
a X^{(n-1)} \deeq ba \cdot Z_1,\,\,Y \deeq b^{n} \cdot Z_1. 
\]
By the induction hypothesis, we say that there exists a positive word $Z_2$ such that
\[
X^{(n-1)} \deeq b a \cdot Z_2,\,\, Z_1 \deeq c Z_2.
\]
Thus, we have $Y \deeq b^{n}c \cdot Z_2$.
\par
Case 2\,:\,$V \deeq b^{n-1}b$.\\
By the above result, we say that there exists a positive word $Z_1$ such that
\[
b X^{(n-1)} \deeq ba \cdot Z_1,\,\,Y \deeq b^{n} \cdot Z_1. 
\]
Thus, we have $X^{(n-1)} \deeq a Z_1$.
\par
Case 3\,:\,$V \deeq b^{n-1}c$.\\
By the above result, we say that there exists a positive word $Z_1$ such that
\[
c X^{(n-1)} \deeq ba \cdot Z_1,\,\,Y \deeq b^{n} \cdot Z_1. 
\]
We have a contradiction. Hence, there does not exist positive words $X^{(n-1)}$ and $Y$ that satisfy the equation $(4.6)$. 

\end{proof}
This completes the proof of Theorem 4.1.
\end{proof}
Secondly, we show the cancellativity of $H^{+}_{n}$.
\ \ \\
\begin{theorem}
{\it The monoid $H_{n}^{+}$ is a cancellative monoid. }
 \par 
\end{theorem}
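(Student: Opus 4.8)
The plan is to reproduce, for $H^{+}_{n}$, the architecture of the proof of Theorem 4.1. As in Proposition 4.2, I would first reduce right cancellativity to left cancellativity via the anti-isomorphism $\varphi : H^{+}_{n}\to H^{+}_{n}$, $W\mapsto \sigma(rev(W))$, where $\sigma = \big(^{\,a,b,c}_{\,c,b,a}\big)$ and $rev$ reverses a word. One checks that $\varphi$ is well-defined: the relations $ac = ca$ and $ab = bc$ are each carried to themselves with their two sides interchanged, while $\varphi$ sends $b(ab)^{n}ba = cb(ab)^{n}b$ to $cb(bc)^{n}b = b(bc)^{n}ba$, which becomes the same relation (sides interchanged) after replacing every occurrence of the substring $bc$ by $ab$. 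Then $\beta\alpha \deeq \gamma\alpha$ forces $\varphi(\alpha)\varphi(\beta)\deeq \varphi(\alpha)\varphi(\gamma)$, so left cancellativity yields $\varphi(\beta)\deeq \varphi(\gamma)$ and hence $\beta\deeq \gamma$. It therefore suffices to prove that $H^{+}_{n}$ is left cancellative.

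For left cancellativity I would run the triple induction of \cite{[I2]} used in the proof of Proposition 4.3 --- on the word-length $r$ of the longer word, on the chain-length $t$ of the transformation, and on a shift parameter $h$ --- and prove simultaneously a finite package of \emph{elementary confluence statements} describing, for letters $v,w\in\{a,b,c\}$, all solutions of an equation $vX \deeq wY$. The package consists of: the statement $vX\deeq vY\Rightarrow X\deeq Y$; the two ``short'' mixed cases $aX\deeq bY$ and $aX\deeq cY$, whose solutions are controlled by $ab=bc$ and $ac=ca$ and take essentially the shape of Proposition 4.3 (ii), (iii); and the ``long'' mixed case $bX\deeq cY$, which is the only one into which the relation $b(ab)^{n}ba=cb(ab)^{n}b$ can enter. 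As in Proposition 4.3, the last case must be refined into a family of sub-statements indexed by how long a prefix of the form $b^{h}$, $b^{h}a$ or $b^{h}c$ the left-hand word begins with --- the $H^{+}_{n}$-analog of the clauses (iv-$0$) through (iv-($n-1$)-$c$) --- where $h$ now runs over an explicit finite interval (roughly $\{0,1,\dots,2n+1\}$) forced by the length $2n+3$ of the long relation, and $X=X^{(h)}$ has word-length $r-h$. Pinning down the correct normal forms for all of these clauses is the substantive input.

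Once the package is fixed, the induction step is run exactly as in the proof of Theorem 4.1: given a transformation $v_{1}X\deeq v_{2}W_{2}\deeq\cdots\deeq v_{t+2}Y$ of chain-length $t+1$ with intermediate words of length $r+1$, I cut it at an index $\tau$ into two transformations of chain-length $\le t$ and analyze the triple $(v_{1},v_{\tau},v_{t+2})$. If $v_{\tau}\in\{v_{1},v_{t+2}\}$, or if the triple avoids the mixed pair $\{b,c\}$, then statement (i) together with the lower-$t$ hypothesis closes the case (this is Step 1 in the proof of Proposition 4.3); each remaining triple is fed into the $bX\deeq cY$ package and, after peeling off leading letters, reduces to equations with strictly smaller $(r,t)$ or $h$. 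The clauses with $h\ge 1$ are then deduced from the $h=0$ clause by the inner induction on $h$, as in Step 2 there.

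The hard part is isolating the right package in the second step. For $G^{+}_{n}$ the long relation $cb^{n}=b^{n}a$ pits one letter against a pure block of $b$'s, so the normal forms in Proposition 4.3 remain simple (roughly $c^{k}b^{n-1}a\cdot Z$ against $a^{k}b^{n}\cdot Z$). For $H^{+}_{n}$ the relation $b(ab)^{n}ba=cb(ab)^{n}b$ interleaves $a$'s and $b$'s on both sides, so I must first work out by hand the complete list of normal forms of positive words that can be equivalent to a word beginning with $c$ --- including the boundary values of $h$ and the case distinctions forced by the alternation inside the block $(ab)^{n}$ --- and then check that this list is stable under the reductions produced in the induction step. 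Getting this list right is where essentially all the work lies; after that the triple induction goes through formally. Since, as noted in the introduction, the presentation of $H^{+}_{n}$ is not complete and its completion does not terminate, the completeness criteria of \cite{[Deh1]}, \cite{[Deh2]} are unavailable, and this hand-crafted induction is the only route.
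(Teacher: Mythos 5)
Your overall architecture is the paper's: Proposition 4.5 reduces right to left cancellativity via exactly the anti-isomorphism $\varphi(W)=\sigma(rev(W))$ you describe, and left cancellativity is then proved by an induction on word-length and chain-length over a simultaneous package of confluence statements (Proposition 4.6), with a Step 1 ($h=0$) analyzing the triple $(v_1,v_\tau,v_{t+2})$ and a Step 2 handling the higher clauses. (A minor point: the paper runs a \emph{double} induction in the style of Garside and Brieskorn--Saito here, not the triple induction of \cite{[I2]} used for $G^{+}_{n}$.)

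The genuine gap is that you defer, and then misidentify, the one thing that constitutes the content of the proof: the shape of the package. You propose to index the $bX\deeq cY$ clauses by prefixes $b^{h}$, $b^{h}a$, $b^{h}c$ with $h$ running up to roughly $2n+1$, by direct analogy with Proposition 4.3. But the relation $b(ab)^{n}ba=cb(ab)^{n}b$ has no long pure $b$-block on either side --- after the initial letter the left side alternates $a,b,a,b,\dots$ --- so a stratification by $b^{h}$-prefixes does not match the words that actually arise and would not close under the reductions in the induction step. The paper's Proposition 4.6 is both smaller and differently shaped: besides the clauses (i)--(iii) and the main clause (iv) (whose solutions are $X^{(0)}\deeq c^{k}(ab)^{n}ba\cdot Z$, $Y\deeq a^{k}b(ab)^{n}b\cdot Z$), it needs only one $bb$-prefix clause (v) for $bb\cdot X^{(1)}\deeq cY$, and then a family (vi-$h$) treating equations of the form $c^{h-1}bb\cdot X^{(h)}\deeq bY$ --- that is, a $c$-initial word against a $b$-initial word, which is what peeling letters in Step 2 actually produces --- with conclusion $X^{(h)}\deeq c(ab)^{n-1}b\cdot Z$, $Y\deeq(ab)^{n}ba^{h-1}\cdot Z$. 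Since you explicitly flag the determination of this list as ``where essentially all the work lies'' and your candidate list is structurally wrong, the proposal as written does not yet yield a proof; it is a correct strategy with the decisive lemma left open and pointed in the wrong direction.
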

\begin{proof} First, we remark the following.\par
\begin{proposition}
%\label{mythm}
%\it Editor
{\it The left cancellativity on $H_{n}^{+}$ implies the right cancellativity.} 
 \par 
\end{proposition}
\begin{proof} Consider a map
 $\varphi:H^{+}_{n}\rightarrow H^{+}_{n}$,
 $W\mapsto \varphi(W):=\sigma$$(rev(W))$, where $\sigma$ is a
 permutation $\big(^{\,a, b, c}_{\,
c, b, a}\big)$. By following the proof in Proposition 4.2, we can show the statement.
\end{proof}
The following is sufficient to show the left cancellativity of the monoid $H^+_{n}$.
\begin{proposition}
%\label{mythm}
%\it Editor
{\it Let $Y$ be a positive word in $H^+_{n}$ of length $r\in \Z_{\ge0}$ and let $X^{(h)}$ be a positive word in $H^+_{n}$ of length $r-h \in \{\,2n, \ldots, r \,\}$.
\smallskip
\\
{\rm (i)}\, If $vX^{(0)} \deeq \! vY$ for some $v \in \{a, b, c \}$, then $X^{(0)} \deeq \! Y$.\\
{\rm (ii)}\, If $a X^{(0)} \deeq b Y$, then $X^{(0)} \deeq b Z$, $Y \deeq c Z$ for some positive word $Z$.\\
{\rm (iii)}\, If $a X^{(0)} \deeq c Y$, then $X^{(0)} \deeq c Z$, $Y \deeq a Z$ for some positive word $Z$.\\
{\rm (iv)}\, If $b X^{(0)} \deeq c Y$, then there exist an integer $k$ $(0 \leq k \leq r-2n-2)$ and a positive word $Z$ such that $X^{(0)} \deeq c^k (ab)^{n}ba \cdot Z$ and $Y \deeq  a^k b(ab)^n b \cdot Z$.\\
{\rm (v)}\, If $bb \cdot X^{(1)} \deeq c Y$, then $X^{(1)} \deeq c(ab)^{n-1}ba \cdot Z$, $Y \deeq b(ab)^n b \cdot Z$ for some positive word $Z$. \\
For $2 \leq h \leq r-2n$, we prepare the following propositions.\\
{\rm (vi\,-\,$h$)}\, If $c^{h-1}bb\cdot X^{(h)} \deeq b Y$, then $X^{(h)} \deeq c(ab)^{n-1}b \cdot Z$ and $Y \deeq (ab)^n b a^{h-1} \cdot Z$ for some positive word $Z$. \\
} \\
 \par 
\end{proposition}
\begin{proof} By refering to the double induction (see \cite{[G]}, \cite{[B-S]} for instance), we show the general theorem. The theorem for a positive word $Y$ of word-length $r$ and $X^{(h)}$ of word-length $r-h \in \{\,r-2n, \ldots, r \,\} $ will be refered to as $\mathrm{H}_{r, h}$. It is easy to show that, for $r = 0, 1$, $\mathrm{H}_{r, h}$ is true. For induction hypothesis, we assume\\
$(\mathrm{A})$\,\,$\mathrm{H}_{s, h}$ is true for $s = 0, \ldots, r$ and arbitrary $h$ for transformations of all chain-lengths, \\
and\\
$(\mathrm{B})$\,\,$\mathrm{H}_{r+1, h}$ is true for $0$ $\leq$ $h$ $\leq$ $\max\{ 0, r+1-2n \}$ for all chain-lengths $\leq$ $t$.\\
We will show the theorem $\mathrm{H}_{r+1, h}$ for chain-lengths $t+1$. For the sake of simplicity, we devide the proof into two steps.\\
{\bf Step 1}.\,$\mathrm{H}_{r+1, h}$ for $h = 0$\\
Let $X, Y$ be of word-length $r+1$, and let
\[
v_1 X \deeq v_2 W_2 \deeq \cdots \,\deeq v_{t+1} W_{t+1} \deeq v_{t+2} Y
\]
be a sequence of single transformations of $t+1$ steps, where $v_1, \ldots, v_{t+2} \in \{\,a, b, c\, \}$ and $W_2, \ldots , W_{t+1}$ are positive words of length $r+1$. By the assumption $t > 1$, there exists an index $\tau \in \{\,2, \ldots, t+1\,\}$ such that we can decompose the sequence into two steps
\[
v_1 X \deeq v_{\tau} W_{\tau} \deeq v_{t+2} Y,
\]
in which each step satisfies the induction hypothesis $(\mathrm{B})$.\par
If there exists $\tau$ such that $v_{\tau}$ is equal to either to $v_1$ or $v_{t+2}$, then by induction hypothesis, $W_{\tau}$ is equivalent either to $X$ or to $Y$. Hence, we obtain the statement for the $v_1 X \deeq v_{t+2} Y$. Thus, we assume from now on $v_{\tau} \not= v_1, v_{t+2}$ for $1 < \tau \leq t+1$.\par
Suppose $v_1 = v_{t+2}$. If there exists $\tau$ such that $(\,v_1 = v_{t+2}, v_{\tau}\, ) \not= (\,b, c\, ), (\,c, b\, )$, then each of the equivalences says the existence of $\alpha, \beta \in \{\,a, b, c\, \}$ and positive words $Z_1, Z_2$ such that $X \deeq \alpha Z_1$, $W_{\tau} \deeq \beta Z_1 \deeq \beta Z_2$ and $Y \deeq \alpha Z_2$. Applying the induction hypothesis $(\mathrm{A})$ to $\beta Z_1 \deeq \beta Z_2$, we get $Z_1 \deeq Z_2$. Hence, we obtain the statement $X \deeq \alpha Z_1 \deeq \alpha Z_2 \deeq Y$. Thus, we exclude these cases from our considerations. Next, we consider the case $(\,v_1 = v_{t+2}, v_{\tau}\, ) = (b, c )$. However, because of the above consideration, we say $v_2 = \cdots = v_{t+1} = c\,$. Hence, we consider the following case
\[
b X \deeq c W_1 \deeq \cdots \deeq c W_{t+1} \deeq b Y.
\]
Applying the induction hypothesis $(\mathrm{B})$ to each step, we say that there exist positive words $Z_3$ and $Z_4$ such that
\[
X \deeq (ab)^n ba \cdot Z_3,\,\, W_1 \deeq b(ab)^n b \cdot Z_3,\,\,\,\,
\]
\[
W_{t+1} \deeq b(ab)^n b \cdot Z_4,\,\, Y \deeq (ab)^n ba \cdot Z_4.
\]
Since an equation $W_1 \deeq W_{t+1}$ holds, we say that $X \deeq Y$.\\
In the case of $(\,v_1 = v_{t+2}, v_{\tau}\, ) = (c, b)$, we can prove the statement in a similar manner.\\
Suppose $v_1 \not= v_{t+2}$. It suffices to consider the following two cases.
\par
Case 1\,: $(\,v_1, v_{\tau}, v_{t+2}\, ) = (\,a, b, c\, )$\\
Because of the above consideration, we consider the case $\tau = t+1$, namely
\[
a X \deeq b W_{t+1} \deeq c Y.
\]
Applying the induction hypothesis to each step, we say that there exist positive words $Z_1$ and $Z_{2}$ such that
\[
X \deeq b Z_1,\,\, W_{t+1} \deeq c Z_1,\,\,\,\,\,\,\,\,\,\,\,\,\,\,\,\,\,\,\,\,\,\,\,\,\,\,\,\,\,\,\,\,
\]
\[
\,\,\,\,\,\,\,\,\,W_{t+1} \deeq (ab)^n ba \cdot Z_{2},\,\, Y \deeq b(ab)^n b \cdot Z_{2}.
\]
Thus, we say that $c Z_1 \deeq  (ab)^n ba \cdot Z_{2}$. Applying the induction hypothesis $(\mathrm{A})$ to this equation, we say that there exists a positive word $Z_3$ such that
\[
Z_1 \deeq a Z_3,\,\, b(ab)^{n-1} ba \cdot Z_{2} \deeq  c Z_3.
\]
Hence, we have $bbc(ab)^{n-2} ba \cdot Z_{2} \deeq  c Z_3$. Applying the induction hypothesis $(\mathrm{A})$ to this equation, there exists a positive word $Z_4$ such that
\[
c(ab)^{n-2} ba \cdot Z_2 \deeq c(ab)^{n-1} ba \cdot Z_4,\,\, Z_{3} \deeq b(ab)^n b\cdot Z_4.
\]
Hence, we have $ba \cdot Z_2 \deeq abba \cdot Z_4$. Moreover, we say that there exists a positive word $Z_5$ such that
\[
Z_2 \deeq cba \cdot Z_5,\,\,Z_4 \deeq c Z_5.
\]
Thus, we have
\[
X \deeq bab(ab)^n bc \cdot Z_5 \deeq c \cdot b(ab)^n bcb \cdot Z_5,
\]
\[
Y \deeq b(ab)^n bcba \cdot Z_5 \deeq a \cdot b(ab)^n bcb \cdot Z_5.
\]
\par
Case 2\,: $(\,v_1, v_{\tau}, v_{t+2}\, ) = (\,a, c, b\, )$\\
We consider the case $\tau = t+1$, namely
\[
a X \deeq c W_{t+1} \deeq b Y.
\]
Applying the induction hypothesis to each step, we say that there exist positive words $Z_1$ and $Z_{2}$ such that
\[
X \deeq c Z_1,\,\, W_{t+1} \deeq a Z_1,\,\,\,\,\,\,\,\,\,\,\,\,\,\,\,\,\,\,\,\,\,\,\,\,\,\,\,\,\,\,\,
\]
\[
\,\,\,\,\,\,\,\,W_{t+1} \deeq b(ab)^n b \cdot Z_{2},\,\, Y \deeq (ab)^{n}ba \cdot Z_{2}.
\]
Thus, we say that $a Z_1 \deeq b(ab)^n b \cdot Z_{2}$. Applying the induction hypothesis $(\mathrm{A})$ to this equation, we say that there exists a positive word $Z_3$ such that
\[
Z_1 \deeq b Z_3,\,\,(ab)^n b \cdot Z_2 \deeq c Z_3.
\]
Hence, there exists a positive word $Z_4$ such that
\[
b(ab)^{n-1} b \cdot Z_2 \deeq c Z_4,\,\,Z_3 \deeq a Z_4.
\]
We have $bbc(ab)^{n-2} b \cdot Z_2 \deeq c Z_4$. Applying the induction hypothesis $(\mathrm{A})$ to this equation, we say that there exists a positive word $Z_5$ such that
\[
c(ab)^{n-2}b \cdot Z_2 \deeq c(ab)^{n-1}ba \cdot Z_5,\,\,Z_4 \deeq b(ab)^n b \cdot Z_5.
\]
Hence, we have $Z_2 \deeq cba \cdot Z_5$. Thus, we have
\[
X \deeq cbab(ab)^n b \cdot Z_5 \deeq b(ab)^n bacb \cdot Z_5,
\]
\[
Y \deeq (ab)^{n}bacba \cdot Z_5 \deeq c(ab)^n bacb \cdot Z_5.
\]
\\
{\bf Step 2}.\,$\mathrm{H}_{r+1, h}$ for $1 \leq h \leq r+1-2n$\\
We will show the general theorem $\mathrm{H}_{r+1, h}$. First, we show the case $h = 1$. Then, we consider the following case
\[
bb\cdot X^{(1)} \deeq \cdots \deeq c Y.
\]
By the result of Step 1, we say that there exist a positive word $Z_1$ and an integer $k \in \Z_{\ge0}$ such that
\[
b X^{(1)} \deeq c^{k}(ab)^{n}ba \cdot Z_1,\,\, Y  \deeq a^{k}b(ab)^n b \cdot Z_1. 
\]
Thus, we have $b X^{(1)} \deeq ac^{k}b(ab)^{n-1}ba \cdot Z_1$. Applying the induction hypothesis $(\mathrm{A})$, we say that there exists a positive word $Z_2$ such that
\[
X^{(1)} \deeq c Z_2,\,\, b Z_2 \deeq c^{k}b(ab)^{n-1}ba \cdot Z_1 \deeq c^{k}bbc(ab)^{n-2}ba \cdot Z_1. 
\]
We consider the case $k \ge 1$. By the induction hypothesis, we say that there exists a positive word $Z_3$ such that
\[
Z_2 \deeq (ab)^n ba^k \cdot Z_3,\,\, c(ab)^{n-2}ba \cdot Z_1 \deeq c(ab)^{n-1}b \cdot Z_3. 
\]
Hence, we have $ba \cdot Z_1 \deeq abb \cdot Z_3$. Then, we have $a Z_1 \deeq cb \cdot Z_3$. By the induction hypothesis, there exists a positive word $Z_4$ such that
\[
Z_1 \deeq cb\cdot Z_4,\,\, Z_3 \deeq c Z_4.
\]
Thus, we have
\[
X^{(1)} \deeq c(ab)^n ba^k c\cdot Z_4 \deeq c(ab)^{n-1} ba \cdot cba^k \cdot Z_4,
\]
\[
Y \deeq a^{k}b(ab)^n bcb\cdot Z_4 \deeq b(ab)^n b\cdot cba^k\cdot Z_4.\,\,\,\,\,\,\,\,\,\,\,\,
\]
Next, we consider the case $2 \leq k \leq r+1-2n$. We consider the following case
\begin{equation}
c^{h-1}bb\cdot X^{(h)} \deeq \cdots \deeq b Y.
\end{equation}
By the result of Step 1, we say that there exist a positive word $Z_1$ and an integer $k_1 \in \Z_{\ge0}$ such that
\[
c^{h-2}bb\cdot X^{(h)} \deeq a^{k_1}b(ab)^n b\cdot Z_1,\,\, Y \deeq c^{k_1}(ab)^n ba \cdot Z_1.
\]
By repeating the same process $h-1$ times, there exist integers $k_{2}, \ldots, k_{h-1} \in \Z_{\ge0}$ and positive word $Z_{h-1}$  such that
\[
bb\cdot X^{(h)} \deeq a^{k_{h-1}}\cdot b(ab)^n b\cdot Z_{h-1}.
\]
Then, we have $b\cdot X^{(h)} \deeq c^{k_{h-1}}\cdot (ab)^n b\cdot Z_{h-1} \deeq ac^{k_{h-1}}\cdot b(ab)^{n-1} b\cdot Z_{h-1}$. By the induction hypothesis, there exists a positive word $Z_{h}$ such that
\[
X^{(h)} \deeq c Z_h,\,\, c^{k_{h-1}}\cdot b(ab)^{n-1} b\cdot Z_{h-1} \deeq b Z_h.
\]
Hence, we have $b Z_h \deeq c^{k_{h-1}}\cdot bbc(ab)^{n-2} b\cdot Z_{h-1}$. By the induction hypothesis, there exists a positive word $Z_{0}$ such that
\[
c(ab)^{n-2} b\cdot Z_{h-1} \deeq c(ab)^{n-1}b \cdot Z_0,\,\, Z_h \deeq (ab)^n ba^{k_{h-1}} \cdot Z_0.
\]
Thus, we have $b Z_{h-1} \deeq abb \cdot Z_0$. We have $Z_{h-1} \deeq cb \cdot Z_0$. Then, we have
\[
X^{(h)} \deeq c(ab)^n ba^{k_{h-1}} \cdot Z_0 \deeq c(ab)^{n-1}b \cdot cba^{k_{h-1}} \cdot Z_0.
\]
Applying this result to (4.7), we have
\[
bY \deeq c^{h-1}bb\cdot c(ab)^{n-1}b \cdot cba^{k_{h-1}} \cdot Z_0 \deeq b(ab)^n b a^{h-1} \cdot cba^{k_{h-1}} \cdot Z_0.
\]
Hence, we have $Y \deeq (ab)^n b a^{h-1} \cdot cba^{k_{h-1}} \cdot Z_0$.
\\
\end{proof}
This completes the proof of Theorem 4.4.
\end{proof}

\section{Calculations of the skew growth functions}
 In this section, we will calculate the skew growth functions for the monoids $G^{+}_{\mathrm{B_{ii}}}$, $G^{+}_{n}$, $H^{+}_{n}$ and $M_{\mathrm{abel}, m}$. \par 
First, we present an explicit calculation of the skew growth function for the monoid $G^{+}_{\mathrm{B_{ii}}}$. In \cite{[I1]}, we have made a success in calculating the growth function $P_{G^{+}_{\mathrm{B_{ii}}}, \mathrm{deg}}(t)$ by using the normal form for the monoid $G^{+}_{\mathrm{B_{ii}}}$. By the inversion formula, we can calculate the skew growth function $N_{G^{+}_{\mathrm{B_{ii}}}, \mathrm{deg}}(t)$. Nevertheless, we present an explicit calculation, because, in spite of the fact that the monoid is non-abelian and the height of it is infinite, we succeed in the non-trivial calculation.\\
{\bf Example.}\,\,{\bf 1.}\,\,First of all, we recall a fact from \cite{[S-I]} \S7.
\begin{lemma}
{\it  Let $X$ and $Y$ be positive words in $G^{+}_{\mathrm{B_{ii}}}$ of length $r\in \Z_{\ge0}$.
\smallskip
\\
{\rm (i)}\, If $vX \deeq \! vY$ for some $v \in \{a, b, c \}$, then $X \deeq \! Y$.\\
{\rm (ii)}\, If $a X \deeq b Y$, then $X \deeq b Z$, $Y \deeq c Z$ for some positive word $Z$.\\
{\rm (iii)}\, If $a X \deeq c Y$, then $X \deeq c Z$, $Y \deeq a Z$ for some positive word $Z$.\\
 {\rm (iv)}\, If $b X \deeq c Y$, then there exist an integer $k \in \Z_{\ge0}$ and a positive word $Z$ such that $X \deeq c^{k}ba \cdot Z$, $Y \deeq a^{k}bb \cdot Z$.}
 \par 
\end{lemma}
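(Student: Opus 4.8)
The plan is to run the same double induction --- on word-length and on chain-length (the number of elementary transformations used) --- that proves Proposition 4.3, of which Lemma 5.1 is essentially the $n=2$ instance (the relation $cb^{n}=b^{n}a$ specialised to $cbb=bba$). As there, assertions (i)--(iv) are not closed under the induction on their own, so I would first enlarge the list by the $n=2$ analogues of the length-two prefix statements (iv-1-a), (iv-1-b), (iv-1-c) of Proposition 4.3: roughly, no positive words satisfy $ba\cdot X'\deeq cY$; if $bb\cdot X'\deeq cY$ then $X'\deeq a\cdot Z$ and $Y\deeq bb\cdot Z$; and if $bc\cdot X'\deeq cY$ then $X'$ and $Y$ are, up to a common positive suffix, of the forms $c^{k}ba$ and $a^{k'}bb$. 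Denote by $\mathrm{H}_{r}$ the conjunction of (i)--(iv) together with these three auxiliary claims for the relevant words of length $\le r$; $\mathrm{H}_{0}$ and $\mathrm{H}_{1}$ hold by direct inspection.

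For the inductive step, assume $\mathrm{H}_{s}$ for all $s\le r$ and, as a secondary induction, that $\mathrm{H}_{r+1}$ holds for equivalences realised by at most $t$ elementary transformations; then prove $\mathrm{H}_{r+1}$ for $t+1$. Given a chain $\alpha_{0}X\deeq\alpha_{1}W_{1}\deeq\cdots\deeq\alpha_{m}Y$ with single letters $\alpha_{i}$ and positive words $W_{i}$ of the common length, pick an intermediate $\alpha_{\tau}W_{\tau}$ splitting the chain into two pieces of chain-length $\le t$. If $\alpha_{\tau}\in\{\alpha_{0},\alpha_{m}\}$ the secondary hypothesis identifies $W_{\tau}$ with $X$ or with $Y$ and we are done, so the content is in the remaining cases. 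When $\alpha_{0}=\alpha_{m}$ the surviving subcase reduces to $(\alpha_{0},\alpha_{\tau})=(b,c)$ or $(c,b)$, settled by applying (iv) at both ends of the chain and cancelling the resulting common prefix $bb$ via $\mathrm{H}_{r}$. When $\alpha_{0}\ne\alpha_{m}$ one works through $(\alpha_{0},\alpha_{\tau},\alpha_{m})=(a,b,c),(a,c,b),(b,a,c)$. In each of these the two halves yield splittings of $X$, of $W_{\tau}$ and of $Y$, and eliminating $W_{\tau}$ produces an equation between strictly shorter words --- of the shape $c\cdot Z_{1}\deeq ba\cdot Z_{2}$ or $a\cdot Z_{1}\deeq bb\cdot Z_{2}$ --- to which $\mathrm{H}_{r}$ applies, returning $Z_{1}\deeq bbc\cdot Z_{3}$, $Z_{2}\deeq ba\cdot Z_{3}$ (respectively $Z_{1}\deeq bbb\cdot Z_{3}$, $Z_{2}\deeq ba\cdot Z_{3}$), whence the asserted normal forms for $X$ and $Y$; the exponent $k$ of assertion (iv) is generated precisely by iterating $cbb=bba$ and carried along by $ab=bc$.

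The three auxiliary (length-two prefix) claims are then deduced from the already-proved length-one ones by peeling off one letter: for instance from $bb\cdot X'\deeq cY$ one applies (iv) to $b\cdot(bX')\deeq cY$ to get $bX'\deeq c^{k}ba\cdot Z$ and $Y\deeq a^{k}bb\cdot Z$, then analyses $bX'\deeq c^{k}ba\cdot Z$ using (i), (iii) and (iv) according as $k=0$ or $k\ge1$, the case $k\ge1$ collapsing by $\mathrm{H}_{r}$; the prefix $ba$ is excluded because an $a$ can never be moved past a leading $c$.

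I expect the main obstacle to be the combinatorial bookkeeping in the elimination step: one must check that the two splittings coming from the two halves of the chain are mutually compatible, that the several integer parameters which appear add up so that the single exponent $k$ in (iv) is consistent, and --- above all --- that the residual equation genuinely involves words of length $\le r$, so that the outer induction is well-founded. A secondary point of care is that several formulas from Proposition 4.3 degenerate at $n=2$ (for instance $b^{n-2}a$ becomes $a$ and $b^{n-1}a$ becomes $ba$), causing a couple of the auxiliary statements to coalesce; one should check that these coalesced statements remain mutually consistent.
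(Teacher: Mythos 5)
First, a point of orientation: the paper does not actually prove Lemma 5.1 in this article --- it is recalled from \cite{[S-I]} \S 7 --- so the natural in-paper benchmark is the analogous argument proving Propositions 4.3 and 4.6. Your overall plan (induct on word-length and on chain-length, enlarge the statement by length-two-prefix auxiliary claims so that the induction closes, split a chain at an intermediate letter $\alpha_\tau$ and eliminate $W_\tau$) is exactly that argument specialised to $n=2$, and most of your intermediate formulas are right (e.g.\ $cZ_1\deeq ba\cdot Z_2$ forcing $Z_1\deeq bbc\cdot Z_3$, $Z_2\deeq ba\cdot Z_3$, and $aZ_1\deeq bb\cdot Z_2$ forcing $Z_1\deeq bbb\cdot Z_3$, $Z_2\deeq ba\cdot Z_3$).

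However, one of the three auxiliary claims you propose to carry through the induction is false, and falsely justified. In $G^{+}_{\mathrm{B_{ii}}}$ the equation $ba\cdot X'\deeq cY$ \emph{does} have solutions:
\[
ba\cdot ba \equiv b(ab)a \deeq b(bc)a \equiv bb(ca) \deeq bb(ac) \equiv (bba)c \deeq (cbb)c \equiv c\cdot bbc,
\]
so $X'\equiv ba$, $Y\equiv bbc$ works, and your heuristic that ``an $a$ can never be moved past a leading $c$'' fails precisely because of the relation $cbb=bba$. The source of the error is the coalescence you yourself flag at the end: at $n=2$ the indices $h=1$ and $h=n-1$ coincide, and the two colliding statements of Proposition 4.3 --- (iv-$1$-$a$), which asserts non-existence, and (iv-($n\!-\!1$)-$a$), which asserts $X^{(n-1)}\deeq ba\cdot Z$, $Y\deeq b^{n}c\cdot Z$ --- are incompatible; the correct specialisation is the latter, i.e.\ $ba\cdot X'\deeq cY$ forces $X'\deeq ba\cdot Z$ and $Y\deeq bbc\cdot Z$. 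As written, your induction package contains a false statement and therefore cannot close: the step where you ``exclude'' the prefix $ba$ must instead produce this solution family, exactly as in Case~1 of Step~2 of the proof of Proposition 4.3, where at $n=2$ the equation $b^{n-2}a\cdot Z_1\deeq cZ_3$ degenerates to the solvable $aZ_1\deeq cZ_3$ rather than yielding a contradiction. Replace the first auxiliary claim by the corrected one and the rest of your plan should go through; the other two auxiliary claims (the $bb$-prefix one, which is the paper's Lemma 5.2, and the $bc$-prefix one) are stated correctly.
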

\par
Thanks to the Lemma 5.1, we have proved the cancellativity in \cite{[S-I]}. And we prove the following Lemma.

\begin{lemma}
{\it If an equation $bb\cdot X \deeq c Y$ in $G^{+}_{\mathrm{B_{ii}}}$ holds, then $X \deeq a Z$, $Y \deeq bb\cdot Z$ for some positive word $Z$. }
\end{lemma}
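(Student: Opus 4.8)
The plan is to establish the Lemma by induction on the common word-length $r:=\deg(X)$ (note that $bb\cdot X\deeq cY$ forces $\deg(Y)=r+1$, and the witness $Z$ we produce will have $\deg(Z)=r-1$), in the spirit of the cancellativity arguments of Section~4 but using Lemma~5.1 as a black box rather than re-deriving it. The cases $r=0,1$ are dispatched by hand: a word of the form $bb$ or $bbv$ with $v\in\{a,b,c\}$ is $\deeq$-equivalent to a word beginning with $c$ only when $v=a$, and then $bba\deeq cbb$, so one takes $Z:=\varepsilon$.

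For the inductive step I would first apply Lemma~5.1\,(iv) to $b\cdot(bX)\deeq cY$, obtaining an integer $k\ge 0$ and a positive word $Z_1$ with $bX\deeq c^{k}ba\cdot Z_1$ and $Y\deeq a^{k}bb\cdot Z_1$. If $k=0$ this reads $bX\deeq ba\cdot Z_1$, so Lemma~5.1\,(i) gives $X\deeq a\cdot Z_1$ and $Z:=Z_1$ works (indeed $Y\deeq bb\cdot Z_1$). If $k\ge 1$, I would use the elementary identity $bc^{m}\deeq a^{m}b$ (obtained by iterating $ab\deeq bc$) to rewrite $bb\cdot X=b\cdot(bX)\deeq bc^{k}ba\cdot Z_1\deeq a^{k}bba\cdot Z_1=a\cdot\bigl(a^{k-1}bbaZ_1\bigr)$, and then apply Lemma~5.1\,(ii) to $b\cdot(bX)\deeq a\cdot(a^{k-1}bbaZ_1)$; this yields a positive word $Z'$ with $bX\deeq c\cdot Z'$ and $a^{k-1}bba\cdot Z_1\deeq b\cdot Z'$. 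Now one strips the leading letter off the second equation — the leading $b$ by Lemma~5.1\,(i) when $k=1$, the leading $a$ by Lemma~5.1\,(ii) when $k\ge 2$ — and repeats: in the $k=1$ terminal case one arrives at $bX\deeq c\cdot(baZ_1)$ and re-enters Lemma~5.1\,(iv), while in the other branches the word under analysis shrinks to an instance $bb\cdot(\,\cdot\,)\deeq c\cdot(\,\cdot\,)$ of strictly smaller length, on which the induction hypothesis bites. Unwinding, $X$ comes out in the shape $c^{m}ba\cdot Z_2$ with $Z_2$ itself $\deeq$-equivalent to a word beginning with $c$, whence, by the identities $bac\deeq aba$ and $ac\deeq ca$, the word $c^{m}ba\cdot Z_2$ is rewritten into one beginning with $a$; this gives $X\deeq a\cdot Z$, and the companion identities $abb\deeq bcb$, $bba\deeq cbb$, $ab^{3}\deeq b^{3}a$ (hence $bbc^{m}\deeq ba^{m}b$ and, iterating, $a^{m}b^{3}\deeq b^{3}a^{m}$ and $a^{m}bbcb\deeq bbcba^{m}$) are exactly what is needed to check that this same $Z$ satisfies $Y\deeq bb\cdot Z$.

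The hard part is the bookkeeping, not any individual computation. The argument is a nested induction — an outer induction on $r$ together with inner inductions on $k$ and on the auxiliary exponents generated as one peels $a$'s and $c$'s off the fronts of the intermediate words — so one must verify that every branch terminates and, more delicately, that the two conclusions $X\deeq a\cdot Z$ and $Y\deeq bb\cdot Z$ emerge with one and the same $Z$. I would secure termination by noting that each application of Lemma~5.1\,(i)--(iv) strictly decreases the length of the word being analysed; the coordination of the two conclusions — precisely the subtle point that makes the analogous arguments of Section~4 delicate — is handled by keeping the short identities above at hand and discharging them case by case.
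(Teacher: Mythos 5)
Your opening moves match the paper's: apply Lemma~5.1\,(iv) to $b\cdot(bX)\deeq cY$ to get $bX\deeq c^{k}ba\cdot Z_1$, $Y\deeq a^{k}bb\cdot Z_1$, and dispose of $k=0$ by Lemma~5.1\,(i). The gap is in the $k\ge1$ branch. The manoeuvre you describe there --- rewrite $bb\cdot X\deeq a^{k}bba\cdot Z_1$, apply (ii) to get $bX\deeq cZ'$ and $a^{k-1}bba\cdot Z_1\deeq bZ'$, then strip leading letters off the second equation --- is circular: stripping the $a$'s one at a time via (ii) and then the $b$ via (i) just yields $Z'\deeq c^{k-1}ba\cdot Z_1$, so $bX\deeq cZ'$ is the equation $bX\deeq c^{k}ba\cdot Z_1$ you already had. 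No instance of $bb\cdot(\,\cdot\,)\deeq c\cdot(\,\cdot\,)$ of smaller length is produced by these steps, so the appeal to the induction hypothesis has nothing to bite on; and the two closing claims --- that $X$ emerges as $c^{m}ba\cdot Z_2$ with $Z_2$ beginning with $c$, and that one and the same $Z$ witnesses both $X\deeq aZ$ and $Y\deeq bb\cdot Z$ --- are exactly the content of the lemma and are asserted rather than derived. (You do converge to a workable route in the sub-branch $k=1$, $k'=0$, which you compute correctly, but that is the only branch that closes.)

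The paper's proof makes progress differently: it does not induct on the lemma itself but re-applies Lemma~5.1\,(iv) to the \emph{inner} equation $bX\deeq c\cdot\bigl(c^{k-1}ba\cdot Z_0\bigr)$, obtaining $X\deeq c^{i_1}ba\cdot Z_1$ together with the genuinely new relation $c^{k-1}ba\cdot Z_0\deeq a^{i_1}bb\cdot Z_1$. Peeling the prefix $a^{i_1}$ off the latter forces $Z_0\deeq c^{i_1}\cdot Z_0^{(1)}$ and leaves $c^{k-1}ba\cdot Z_0^{(1)}\deeq bb\cdot Z_1$; iterating exactly $k$ times terminates at $ba\cdot Z_0^{(k)}\deeq bb\cdot Z_k$, which by (i) and (ii) gives $Z_0^{(k)}\deeq bZ'$, $Z_k\deeq cZ'$. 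Substituting back and using $ac\deeq ca$, $cbb\deeq bba$, $ab\deeq bc$ exhibits the single witness $Z\deeq c^{i_1+\cdots+i_k}ba^{k}\cdot Z'$ with $bX\deeq ba\cdot Z$ and $Y\deeq bb\cdot Z$ simultaneously. If you want to keep your induction-on-length framing, the repair is to extract from the first iteration the equation $bb\cdot Z_1\deeq c\cdot\bigl(c^{k-2}ba\cdot Z_0^{(1)}\bigr)$ (valid when $k\ge2$) and apply the induction hypothesis to \emph{that}; but either way the step you actually wrote down must be replaced, since as stated it returns you to your starting point.
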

\begin{proof} Due to the Lemma 5.1, we say that there exist an integer $k \in \Z_{\ge0}$ and a positive word $Z_{0}$ such that
\begin{equation}
b X \deeq c^{k}ba \cdot Z_0,\,\,Y \deeq a^{k}bb \cdot Z_0. 
\end{equation}
We consider the case $k \ge 1$. Due to the Lemma 5.1, we say that there exist an integer $i_{1} \in \Z_{\ge0}$ and a positive word $Z_{1}$ such that
\[
X \deeq c^{i_{1}}ba \cdot Z_1,\,\,c^{k-1}ba \cdot Z_0 \deeq a^{i_{1}}bb \cdot Z_1. 
\]
Moreover, we say that there exists a positive word $Z^{(1)}_{0}$ such that
\[
Z_0 \deeq c^{i_{1}} \cdot Z^{(1)}_0,\,\,c^{k-1}ba \cdot Z^{(1)}_0 \deeq bb \cdot Z_1. 
\]
Repeating the same process $k$-times, there exist integers $i_{2}, \ldots, i_{k} \in \Z_{\ge0}$ and positive words $Z^{(k)}_0$ and $Z_{k}$ such that
\[
Z_0 \deeq c^{i_{1}+i_{2}+ \cdots +i_{k}} \cdot Z^{(k)}_0,\,\,ba \cdot Z^{(k)}_0 \deeq bb \cdot Z_k.
\]
Moreover, we say that there exists a positive word $Z'$ such that
\[
Z^{(k)}_0 \deeq b Z',\,\,Z_k \deeq c Z'. 
\]
Applying this result to $(5.1)$, we have
\[
bX \deeq c^{k}ba c^{i_{1}+i_{2}+ \cdots +i_{k}} b \cdot Z' \deeq ba c^{i_{1}+i_{2}+ \cdots +i_{k}} b a^{k} \cdot Z',
\]
\[
Y \deeq a^{k}bb c^{i_{1}+i_{2}+ \cdots +i_{k}} b\cdot Z' \deeq bb \cdot c^{i_{1}+i_{2}+ \cdots +i_{k}} ba^{k}\cdot Z'. 
\]
Thus, we have $X \deeq a \cdot c^{i_{1}+i_{2}+ \cdots +i_{k}} b a^{k} \cdot Z'$.
\end{proof}
As a consequence of Lemma 5.2, we obtain the followings.
\ \ \\
\begin{corollary}{\it If an equation $bb \cdot X \deeq c^{l} \cdot Y$ in $G^{+}_{\mathrm{B_{ii}}}$ holds for some positive integer $l$, then $X \deeq a^l \cdot Z$, $Y \deeq bb \cdot Z$ for some positive word $Z$.}
\end{corollary}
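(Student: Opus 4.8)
The plan is to argue by induction on $l$, with Lemma 5.2 serving both as the base case and as the sole engine of the inductive step; all the combinatorics of the defining relations of $G^{+}_{\mathrm{B_{ii}}}$ has already been packaged into Lemmas 5.1 and 5.2, so nothing new about rewriting needs to be done. For $l=1$ the assertion is exactly Lemma 5.2.

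For the inductive step, suppose the statement holds for $l-1$ (with $l\ge 2$) and assume $bb\cdot X \deeq c^{l}\cdot Y$. Rewriting the right-hand side as $c\cdot(c^{l-1}Y)$ and applying Lemma 5.2 with $c^{l-1}Y$ playing the role of the word "$Y$" there, I obtain a positive word $Z'$ with $X \deeq a\cdot Z'$ and $c^{l-1}Y \deeq bb\cdot Z'$. The latter equivalence reads $bb\cdot Z' \deeq c^{l-1}\cdot Y$, so the induction hypothesis applies and yields a positive word $Z$ with $Z' \deeq a^{l-1}\cdot Z$ and $Y \deeq bb\cdot Z$. Combining these, $X \deeq a\cdot Z' \deeq a\cdot a^{l-1}\cdot Z \equiv a^{l}\cdot Z$, which is the desired conclusion. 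Equivalently, one may simply iterate Lemma 5.2 directly $l$ times, peeling off one factor $c$ on the right and one factor $a$ on the left at each stage: this produces positive words with $X \deeq aZ_1$, $Z_1 \deeq aZ_2$, \ldots, $Z_{l-1} \deeq aZ_l$ and $Y \deeq bb\cdot Z_l$, and one sets $Z:=Z_l$.

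I do not expect any genuine obstacle here. The homogeneity of the presentation makes the degree bookkeeping automatic: from $bb\cdot X \deeq c^{l}\cdot Y$ one gets $\deg X + 2 = \deg Y + l$, which is precisely what is needed to invoke Lemma 5.2 with $c^{l-1}Y$ in place of $Y$. The only points deserving (minor) care are to keep the auxiliary positive words $Z',Z_1,\ldots,Z_l$ notationally distinct through the recursion and to confine the inductive step to $l\ge 2$ so that the case $l-1=0$ is never invoked.
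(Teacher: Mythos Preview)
Your argument is correct and is exactly the intended derivation: the paper states the corollary with no proof beyond ``As a consequence of Lemma 5.2,'' and the straightforward induction on $l$ (equivalently, iterating Lemma 5.2 $l$ times) that you wrote out is precisely how one unpacks that remark. Your degree bookkeeping comment is harmless but unnecessary, since Lemma 5.2 carries no explicit length hypothesis.
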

Due to the Corollary 5.3, we can solve the following equation.
\begin{proposition}
{\it If, for $0 \leq i < j$, an equation $c^{i}b \cdot X \deeq c^{j}b \cdot Y$ in $G^{+}_{\mathrm{B_{ii}}}$ holds, then there exist an integer $k \in \Z_{\ge0}$ and a positive word $Z$ such that
\[
X \deeq c^{k}ba^{j-i} \cdot Z,\,\,Y \deeq c^{k}b \cdot Z.
\]
} 
\end{proposition}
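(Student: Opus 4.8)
The plan is to reduce everything to the division lemmas already at hand (Lemma 5.1, Lemma 5.2 and Corollary 5.3) together with the cancellativity of $G^{+}_{\mathrm{B_{ii}}}$ established earlier. First I would strip the common prefix: since $c^{j}b\cdot Y$ is literally $c^{i}\cdot(c^{j-i}b\cdot Y)$, left cancellativity turns the hypothesis $c^{i}b\cdot X\deeq c^{j}b\cdot Y$ into $b\cdot X\deeq c^{l}b\cdot Y$ with $l:=j-i\ge 1$. Applying Lemma 5.1 (iv) to $b\cdot X\deeq c\cdot(c^{l-1}b\cdot Y)$ yields an integer $k\in\Z_{\ge 0}$ and a positive word $Z_{1}$ with $X\deeq c^{k}ba\cdot Z_{1}$ and $a^{k}bb\cdot Z_{1}\deeq c^{l-1}b\cdot Y$; this $k$ is the one occurring in the conclusion, and the remaining task is to peel the leading $a^{k}$ and $c^{l-1}$ off this last equation in a controlled way.

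For that I would isolate two auxiliary claims, each proved by a short induction. \textbf{Claim A:} if $a^{m}bb\cdot U\deeq c^{p}\cdot V$ with $p\ge1$ and $m\ge 0$, then $U\deeq a^{p}\cdot W$ and $V\deeq a^{m}bb\cdot W$ for some positive word $W$. The case $m=0$ is exactly Corollary 5.3. For the step $m\ge1$ one peels the leading $a$ by Lemma 5.1 (iii), obtaining $a^{m-1}bb\cdot U\deeq c\cdot W_{0}$ and $c^{p-1}V\deeq a\cdot W_{0}$; iterating Lemma 5.1 (iii) on the second equation removes the remaining $c$'s, giving $W_{0}\deeq c^{p-1}\cdot W_{1}$ and $V\deeq a\cdot W_{1}$, hence $a^{m-1}bb\cdot U\deeq c^{p}\cdot W_{1}$, and the induction hypothesis at $m-1$ finishes it. \textbf{Claim B:} if $b\cdot Y\deeq a^{k}bb\cdot W$ with $k\ge 0$, then $Y\deeq c^{k}b\cdot W$. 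For $k=0$ this is Lemma 5.1 (i); for $k\ge1$ read the equation as $a\cdot(a^{k-1}bb\cdot W)\deeq b\cdot Y$ and apply Lemma 5.1 (ii) to get $a^{k-1}bb\cdot W\deeq b\cdot V$ and $Y\deeq c\cdot V$, then use the induction hypothesis on $V$.

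With these in hand the proof closes at once. If $l\ge 2$, Claim A applied to $a^{k}bb\cdot Z_{1}\deeq c^{l-1}(b\cdot Y)$ gives a positive word $Z_{2}$ with $Z_{1}\deeq a^{l-1}\cdot Z_{2}$ and $b\cdot Y\deeq a^{k}bb\cdot Z_{2}$; if $l=1$ one simply sets $Z_{2}:=Z_{1}$, and again $b\cdot Y\deeq a^{k}bb\cdot Z_{2}$. In both cases $X\deeq c^{k}ba\cdot Z_{1}\deeq c^{k}ba^{l}\cdot Z_{2}=c^{k}ba^{j-i}\cdot Z_{2}$, and Claim B applied to $b\cdot Y\deeq a^{k}bb\cdot Z_{2}$ gives $Y\deeq c^{k}b\cdot Z_{2}$; taking $Z:=Z_{2}$ and the $k$ above completes the proof.

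The main obstacle is Claim A: it is the only place where the long relation $cbb=bba$ (entering through Corollary 5.3) has to be combined with the short relations (through Lemma 5.1 (iii)) inside a nested induction, and one must be sure that at every peeling step the hypotheses of Lemma 5.1 genuinely apply. That is the case --- homogeneity forces the two sides of each equation to have the same word-length, and conicity excludes the degenerate empty-word cases --- but these points deserve an explicit check rather than being glossed over. Everything else is routine bookkeeping once the prefix $c^{i}$ has been removed.
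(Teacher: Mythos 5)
Your proof is correct and follows essentially the same route as the paper's: cancel the common prefix $c^{i}$ by left cancellativity, apply Lemma 5.1(iv) to $b\cdot X \deeq c\cdot(c^{j-i-1}b\cdot Y)$, and finish with Corollary 5.3. Your Claims A and B merely make explicit the peeling of the $a^{k}$ prefix that the paper dispatches in one terse ``moreover, we say that there exists $Y'$'' step, so this is a welcome tightening of the same argument rather than a genuinely different one.
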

\begin{proof} Due to the cancellativity, we show $c^{i}b \cdot X \deeq c^{j}b \cdot Y \Leftrightarrow b X \deeq c^{j-i}b \cdot Y$. Thanks to the Lemma 5.1, we say that there exist an integer $k \in \Z_{\ge0}$ and a positive word $Z_1$ such that
\[
X \deeq c^{k}ba \cdot Z_1,\,\,c^{j-i-1}b \cdot Y \deeq a^{k}bb \cdot Z_1. 
\]
Moreover, we say that there exist $Y'$
\[
Y \deeq c^{k} \cdot Y',\,\,c^{j-i-1}b \cdot Y' \deeq bb \cdot Z_1.
\]
Due to the Corollary 5.3, there exists a positive word $Z_2$ such that
\[
bY' \deeq bb \cdot Z_2,\,\,Z_1 \deeq a^{j-1-1} \cdot Z_2.
\]
Thus, we have 
\[
X \deeq c^{k}ba^{j-i}\cdot Z_2,\,\, Y \deeq c^{k}b \cdot Z_2. 
\]
\end{proof}
As a corollary of the Proposition 5.4, we show the following lemma.
\begin{lemma}
{\it For $0 \leq \kappa_1 < \kappa_2 < \cdots < \kappa_{m} $,
\[
\mathrm{mcm}(\{\, c^{\kappa_1}b, c^{\kappa_2}b, \ldots, c^{\kappa_{m}}b\, \}) = \{\, c^{\kappa_{m}}b \cdot c^{k}b \mid k = 0, 1, \ldots \,\}
\]
 }
\end{lemma}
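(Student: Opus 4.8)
The plan is to compute $\mathrm{mcm}(\{c^{\kappa_1}b,\dots,c^{\kappa_m}b\})$ in two stages: first describe the set $\mathrm{cm}_r(\{c^{\kappa_1}b,\dots,c^{\kappa_m}b\})$ of all common right multiples, then extract its minimal elements. For the first stage I would argue by induction on $m$, using Proposition 5.4 as the engine. The base case $m=1$ is trivial: $\mathrm{cm}_r(\{c^{\kappa_1}b\})=c^{\kappa_1}b\cdot M$. For the inductive step, suppose $u$ is a common right multiple of $c^{\kappa_1}b,\dots,c^{\kappa_m}b$; by induction it is a right multiple of $c^{\kappa_{m-1}}b\cdot c^{k'}b$ for suitable data, but rather than carry the induction in that form it is cleaner to observe that any common right multiple $u$ of the two largest exponents $c^{\kappa_{m-1}}b$ and $c^{\kappa_m}b$ already forces a strong normal form. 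Indeed, write $u=c^{\kappa_{m-1}}b\cdot X=c^{\kappa_m}b\cdot Y$; by Proposition 5.4 (with $i=\kappa_{m-1}$, $j=\kappa_m$) there exist $k\in\Z_{\ge0}$ and a positive word $Z$ with $X\deeq c^k b a^{\kappa_m-\kappa_{m-1}}\cdot Z$ and $Y\deeq c^k b\cdot Z$, whence $u\deeq c^{\kappa_{m-1}}b\cdot c^k b\cdot a^{\kappa_m-\kappa_{m-1}}\cdot Z\deeq c^{\kappa_m}b\cdot c^k b\cdot Z$. So $u$ is a right multiple of $c^{\kappa_m}b\cdot c^k b$.

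Next I would check the reverse containment and the remaining divisibilities: every element of the form $c^{\kappa_m}b\cdot c^k b\cdot W$ really is a common right multiple of all of $c^{\kappa_1}b,\dots,c^{\kappa_m}b$. Since $\kappa_i\le\kappa_m$ for each $i$, it suffices to show $c^{\kappa_i}b\,|_l\,c^{\kappa_m}b\cdot c^k b$ for each $i<m$. Using the relation $ab=bc$ (equivalently $c^{j}b=bc^{j+1}\cdots$, more precisely the consequence $bc^{j}=c^{j}b$-type shuffles coming from $ac=ca$ and $ab=bc$), one rewrites $c^{\kappa_m}b$ as $c^{\kappa_i}\cdot c^{\kappa_m-\kappa_i}b$ and then, applying $ab=bc$ repeatedly, moves the extra $c$'s to the right past the $b$: concretely $c^{\kappa_m-\kappa_i}b = b a^{\kappa_m-\kappa_i}$, so $c^{\kappa_m}b = c^{\kappa_i}b\cdot a^{\kappa_m-\kappa_i}$, exhibiting $c^{\kappa_i}b$ as a left divisor. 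Hence every $c^{\kappa_m}b\cdot c^k b\cdot W$ lies in $\mathrm{cm}_r(\{c^{\kappa_1}b,\dots,c^{\kappa_m}b\})$, and combined with the previous paragraph we get
\[
\mathrm{cm}_r(\{c^{\kappa_1}b,\dots,c^{\kappa_m}b\})=\{\,c^{\kappa_m}b\cdot c^k b\cdot W\mid k\in\Z_{\ge0},\ W\in M\,\}.
\]

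Finally, for the minimal elements: the candidates are exactly the $c^{\kappa_m}b\cdot c^k b$ (take $W=\varepsilon$), and I must verify (a) each such element is minimal, i.e. no $c^{\kappa_m}b\cdot c^{k'}b$ with $k'\ne k$ left-divides it, and (b) these are all the minimal ones, i.e. every $c^{\kappa_m}b\cdot c^k b\cdot W$ is a right multiple of some $c^{\kappa_m}b\cdot c^{k'}b$ — which is immediate with $k'=k$. For (a), suppose $c^{\kappa_m}b\cdot c^{k'}b\,|_l\,c^{\kappa_m}b\cdot c^{k}b$; by cancellativity $c^{k'}b\,|_l\,c^{k}b$, and then degree considerations force $k'\le k$; if $k'<k$, applying Proposition 5.4 to $c^{k'}b\cdot X\deeq c^{k}b\cdot\varepsilon$ would force the right-hand side to have a nonempty tail of the form $c^{\ell}b a^{\#}\cdots$ contradicting that it equals the single word $c^k b$ (one tracks degrees: the proposition produces $c^{k}b\deeq c^{\ell}b\cdot(\text{something of positive degree})$, impossible). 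The main obstacle I anticipate is exactly this minimality argument — checking that the family $\{c^{\kappa_m}b\cdot c^k b\}_{k\ge 0}$ is an antichain under $|_l$ — since one must rule out subtle coincidences coming from the relation $cbb=bba$; handling it carefully via cancellativity plus Proposition 5.4 and a degree count is the crux. Everything else reduces to bookkeeping with the three defining relations.
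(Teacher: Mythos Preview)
Your overall strategy---reduce everything to Proposition~5.4, describe $\mathrm{cm}_r$, then pick off the minimal elements---is exactly how the paper intends the lemma to be read (it is stated there simply as ``a corollary of Proposition~5.4''), and your forward inclusion and your antichain argument are fine.

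There is, however, a genuine error in your reverse containment. You assert $c^{\kappa_m-\kappa_i}b \deeq b\,a^{\kappa_m-\kappa_i}$, hence $c^{\kappa_m}b \deeq c^{\kappa_i}b\cdot a^{\kappa_m-\kappa_i}$. This is false in $G^+_{\mathrm{B_{ii}}}$: the defining relation is $cbb=bba$, not $cb=ba$, and in fact $c^{\kappa_i}b$ does \emph{not} left-divide $c^{\kappa_m}b$ for $\kappa_i<\kappa_m$. Indeed, if $b\cdot X \deeq c^{\,\kappa_m-\kappa_i}b$ then Lemma~5.1(iv) forces $X\deeq c^{k}ba\cdot Z$ of degree $\ge 2$, whereas $\deg(X)=\kappa_m-\kappa_i$ must match the degree of $c^{\kappa_m-\kappa_i}b$ minus $1$; more directly, the companion equation $c^{\,\kappa_m-\kappa_i-1}b \deeq a^{k}bb\cdot Z$ is impossible on degrees once one compares with $\kappa_m-\kappa_i$. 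So the single word $c^{\kappa_m}b$ is \emph{not} a common right multiple of the family.

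The fix is that the second factor $c^{k}b$ is essential. From $bc^{k}=a^{k}b$ (iterate $ab=bc$), $ac=ca$, and $c^{j}bb=bb\,a^{j}$ (iterate $cbb=bba$), one computes
\[
c^{\,\kappa_m-\kappa_i}\,b\,c^{k}b
\;\deeq\; c^{\,\kappa_m-\kappa_i}a^{k}bb
\;\deeq\; a^{k}c^{\,\kappa_m-\kappa_i}bb
\;\deeq\; a^{k}bb\,a^{\kappa_m-\kappa_i}
\;\deeq\; b\,c^{k}b\,a^{\kappa_m-\kappa_i},
\]
so $c^{\kappa_m}b\cdot c^{k}b \deeq c^{\kappa_i}b\cdot c^{k}b\,a^{\kappa_m-\kappa_i}$, exhibiting $c^{\kappa_i}b$ as a left divisor of $c^{\kappa_m}b\cdot c^{k}b$ for every $i$. (Equivalently, this identity is exactly what the conclusion of Proposition~5.4 encodes when you feed $Y\deeq c^{k}b\cdot Z$ back into the original equation.) With this correction in place, the rest of your argument goes through unchanged.
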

By using the Lemma 5.5, we easily show the following.
\begin{proposition}
{\it We have $h(G^+_{\mathrm{B_{ii}}}, \mathrm{deg}) = \infty$.
} 
\end{proposition}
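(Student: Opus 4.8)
The plan is to construct, for every integer $n \ge 1$, an explicit tower of height $n$ in $G^{+}_{\mathrm{B_{ii}}}$; since $\mathrm{Tmcm}(G^{+}_{\mathrm{B_{ii}}})$ then contains towers of unbounded height, this yields $h(G^{+}_{\mathrm{B_{ii}}}, \mathrm{deg}) = \infty$. All of the towers will have first stage $J_{1} = \{b,c\}$ and will grow by repeatedly translating on the left by the element $cb$.

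First I would record a word identity and use it to determine the first stage. From $ab = bc$ one obtains $a^{k}b = bc^{k}$ in $G^{+}_{\mathrm{B_{ii}}}$ by an immediate induction, and then, using also $bba = cbb$ and $ac = ca$,
\[
bc^{k}ba = a^{k}bba = a^{k}cbb = ca^{k}bb = c\,(a^{k}b)\,b = cb\cdot c^{k}b \qquad (k \ge 0).
\]
By Lemma 5.1 (iv), $u$ is a common right multiple of $b$ and $c$ precisely when $u = bc^{k}ba \cdot Z = cb \cdot c^{k}b \cdot Z$ for some $k \ge 0$ and some positive word $Z$; since each word $c^{k}b$ is rigid (no defining relation of $G^{+}_{\mathrm{B_{ii}}}$ can be applied to any of its subwords), the left cancellativity of $G^{+}_{\mathrm{B_{ii}}}$ shows that the elements $cb\cdot c^{k}b$ are pairwise incomparable for $|_{l}$, and hence
\[
\mathrm{mcm}(\{b,c\}) = \{\, cb\cdot c^{k}b \mid k \ge 0 \,\}.
\]

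Then I would iterate. Write $w_{m} := (cb)^{m}$ for $m \ge 1$. Left cancellativity makes left multiplication by $w_{m}$ an isomorphism of the $|_{l}$-poset onto its image, so $\mathrm{cm}_{r}(w_{m}S) = w_{m}\cdot \mathrm{cm}_{r}(S)$ and $\mathrm{mcm}(w_{m}S) = w_{m}\cdot \mathrm{mcm}(S)$ for every finite $S$. Define $J_{1} := \{b,c\}$ and $J_{m+1} := \{\, w_{m}b,\, w_{m}cb \,\} = \{\, (cb)^{m}c^{0}b,\, (cb)^{m}c^{1}b \,\}$ for $m \ge 1$. The displayed formula gives $I_{1} := \mathrm{mcm}(J_{1}) = \{\, w_{1}c^{k}b \mid k \ge 0 \,\}$, so $J_{2} \subset I_{1}$; and if inductively $I_{m} := \mathrm{mcm}(J_{m}) = \{\, w_{m}c^{k}b \mid k \ge 0 \,\}$, then $J_{m+1} \subset I_{m}$, $\# J_{m+1} = 2$ (the two members have degrees $2m+1$ and $2m+2$), and, by Lemma 5.5 applied to $0 < 1$,
\[
I_{m+1} := \mathrm{mcm}(J_{m+1}) = w_{m}\cdot \mathrm{mcm}(\{c^{0}b, c^{1}b\}) = w_{m}\cdot \{\, c^{1}b\cdot c^{k}b \mid k \ge 0 \,\} = \{\, w_{m+1}c^{k}b \mid k \ge 0 \,\}.
\]
Hence every $\mathrm{mcm}(J_{m})$ is non-empty and every $J_{m}$ satisfies $1 < \# J_{m} < \infty$, so $T = (I_{0}, J_{1}, \ldots, J_{n})$ is a tower of height $n$, for each $n \ge 1$.

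I expect the only genuine obstacle to be the middle paragraph, i.e. computing $\mathrm{mcm}(\{b,c\})$ exactly: the ground $I_{0} = \{a,b,c\}$ is not of the form treated by Lemma 5.5, so this first $\mathrm{mcm}$ has to be extracted directly from Lemma 5.1 (iv), together with the bookkeeping that the minimal elements are exactly the $cb\cdot c^{k}b$. Once $I_{1}$ is displayed as a left translate of $\{\, c^{k}b \mid k\ge 0 \,\}$, Lemma 5.5 and left cancellativity propagate this shape through all higher stages automatically, and the construction closes with essentially no further computation.
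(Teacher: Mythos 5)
Your proposal is correct and follows essentially the same route as the paper: both first identify $\mathrm{mcm}(\{b,c\})=\{\,cb\cdot c^{k}b\mid k\ge 0\,\}$ via Lemma 5.1\,(iv) and then iterate Lemma 5.5 (transported by left multiplication, which is legitimate by cancellativity) to produce towers of arbitrary height above $J_{1}=\{b,c\}$. The only difference is expository: you make the tower explicit with the two-element stages $J_{m+1}=\{(cb)^{m}b,(cb)^{m}cb\}$ and justify the minimality/incomparability bookkeeping that the paper leaves implicit.
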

\begin{proof} Due to the Proposition 5.1, we show
\[
\mathrm{mcm}(\{ b, c \}) = \{\, cb \cdot c^{k}b \mid k = 0, 1, \ldots \,\}. 
\]
Due to the Lemma 5.1, for $0 \leq \kappa_1 < \kappa_2 < \cdots < \kappa_{m}$, we say
\[
\mathrm{mcm}(\{\, cb \cdot c^{\kappa_1}b, cb \cdot c^{\kappa_2}b, \ldots, cb \cdot c^{\kappa_{m}}b\, \}) = \{\,cb \cdot c^{\kappa_{m}}b \cdot c^{k}b \mid k = 0, 1, \ldots \,\}.
\]
By using the Lemma 5.5 repeatedly, we show $h(G^+_{\mathrm{B_{ii}}}, \mathrm{deg}) = \infty$.
\end{proof}
By using the Lemma 5.5, we calculate the skew growth function. We have to consider four cases $J_1 = \{a, b\}, \{a, c\}, \{b, c\}, \{a, b, c\}$. The set $\mathrm{Tmcm}(G^{+}_{\mathrm{B_{ii}}}, J_1)$ denotes the set of all the towers starting from a fixed $J_1$. If $J_1 = \{a, b\}, \{a, c\}$, due to the Lemma 5.1, then $\mathrm{mcm}(\{a, b\})$ and $\mathrm{mcm}(\{a, c\})$ consist of only one element, respectively. Next, we consider the case $J_1 = \{b, c\}$. For a fixed tower $T$, if there exists an element $\Delta \in |T|$ such that $\mathrm{deg}(\Delta) =l+2$, then, from the Lemma 5.5, we say the uniqueness. For any fixed $l \in \Z_{>0}$, we calculate the coefficient of the term $t^{l+2}$ which is denoted by $a_{l}$, by counting all the signs $(-1)^{\#J_1+\cdots+\#J_{n}-n+1}$ in the definition $(3.1)$ associated with the towers $T = (I_0, J_1, J_2, \cdots, J_n)$ for which $\mathrm{deg}(\Delta)$ can take a value $l+2$. In order to calculate the $a_l$, we consider the set
\[ 
\mathcal{T}^{l}_{G^+_{\mathrm{B_{ii}}}} := \{\, T \in  \mathrm{Tmcm}(G^{+}_{\mathrm{B_{ii}}}, J_1) \mid \Delta \in |T| \,\, \mathrm{s.t.}\, \mathrm{deg}(\Delta)=l+2  \,\}. 
\] 
By using the Lemma 5.5 repeatedly, we show
\[ 
\max\{\mathrm{height}\,\, \mathrm{of}\,\, T \in \mathcal{T}^{l}_{G^+_{\mathrm{B_{ii}}}}\} = [(l+1)/2]. 
\]
For $u \in \{\, 1, \ldots, [(l+1)/2]\,\}$, we define the set
\[
\mathcal{T}^{l}_{G^+_{\mathrm{B_{ii}}}, u} := \{\, T \in  \mathrm{Tmcm}(G^{+}_{\mathrm{B_{ii}}}, J_1) \mid \mathrm{height}\,\, \mathrm{of}\,\, T = u,  \Delta \in |T|\,\, \mathrm{s.t.}\, \mathrm{deg}(\Delta)=l+2  \,\}.
\]
From here, we write $\mathcal{T}^{l}_{G^+_{\mathrm{B_{ii}}}}$ (resp. $\mathcal{T}^{l}_{G^+_{\mathrm{B_{ii}}}, u}$) simply by $\mathcal{T}^{l}$ (resp. $\mathcal{T}^{l}_{u}$). Thus, we have the decomposition:
\begin{equation}
\mathcal{T}^{l} = \bigsqcup_{\substack{u}} \mathcal{T}^{l}_{u}. 
\end{equation}
\par
{\bf Claim  1.} For any $u$, we show the following equality
\[
(-1)^{u-1} {}_{l-u} C _{u-1} = \sum_{T\in \mathcal{T}^{l}_{u}}(-1)^{\#J_1+\cdots+\#J_{u}-u+1}.
\]
\begin{proof}
For the case of $u = 1$, the equality holds. For the case of $u = 2$, we calculate the sum $\sum_{T\in \mathcal{T}^{l}_{2}}(-1)^{\#J_{2}-1}$. By indices $0 \leq \kappa_1 < \kappa_2 < \cdots < \kappa_{m}$, the set $J_2$ is generally written by $\{\, cb \cdot c^{\kappa_1}b, cb \cdot c^{\kappa_2}b, \ldots, cb \cdot c^{\kappa_{m}}b\, \}$. Due to the Lemma 5.5, we show that the maximum index $\kappa_{m}$ can range from $1$ to $l-2$. For a fixed index $\kappa_{m} = \kappa \in \{ 1, \ldots, l-2\}$, we easily show
\[
\sum_{T\in \mathcal{T}^{l}_{2}, \kappa_{m}=\kappa}(-1)^{\#J_{2}-1} = -1.
\]
Therefore, we show that the sum $\sum_{T\in \mathcal{T}^{l}_{2}}(-1)^{\#J_{2}-1} = -(l-2) = -{}_{l-2} C _{2-1}$. \par
We show the case for $3 \leq u \leq [(l+1)/2]$ by induction on $u$. We assume the case $u = j$. For the case of $u = j+1$, we focus our attention to the set $J_2$. If we write the set $J_2$ by $\{\, cb \cdot c^{\kappa_1}b, cb \cdot c^{\kappa_2}b, \ldots, cb \cdot c^{\kappa_{m}}b\, \}$, due to the the Lemma 5.5, we show that the maximum index $\kappa_{m}$ can range from $1$ to $l-2j$. By induction hypothesis, it suffices to show the following equality
\[
\sum^{l-2j}_{k=1} {}_{l-j-k-1} C _{j-1} = {}_{l-j-1} C _{j}.
\]
Therefore, we have shown the case $u = j+1$. This completes the proof.
\end{proof}

By the decomposition $(5.2)$, we show the following equality.
\par
{\bf Claim  2.} $a_{l}=\sum^{[(l-1)/2]}_{k=0}(-1)^k {}_{l-k-1} C _k$.\\ 
Then, we easily show the following.
\par
{\bf Claim  3.}  $a_{l+2} - a_{l+1} + a_{l} = 0$.
\begin{proof}
Since an equality ${}_{n+1} C _{k} - {}_{n} C _{k} = {}_{n} C _{k-1}$ holds, we can show our statement.
\end{proof}
We easily show $a_1 = a_2 = 1$. Hence, the sequence $\{a_{l}\}^{\infty}_{l=1}$ has a period $6$. Lastly, we consider the case $J_1 = \{a, b, c\}$. For any fixed $l \in \Z_{>0}$, we calculate the coefficient of the term $t^{l+3}$ which is denoted by $b_{l}$. Since $\mathrm{mcm}(\{a, b, c \}) = \{\, cb \cdot c^{k}b \mid k = 1, 2, \ldots \,\}$, we can reuse the Lemma 5.5. In a similar manner, we have the following conclusion.
\par
{\bf Claim  4.}  $b_{l+2} - b_{l+1} + b_{l} = 0$.\\
Since $b_1 = b_2 = 1$, we also show that the sequence $\{b_{l}\}^{\infty}_{l=1}$ has a period $6$. After all, we can calculate the skew growth function for the monoid $G^{+}_{\mathrm{B_{ii}}}$:
\[
N_{G^{+}_{\mathrm{B_{ii}}},\deg}(t) = 1-3t+2t^2+\frac{t^3}{1-t+t^2}-\frac{t^4}{1-t+t^2} = \frac{(1-t)^4}{1-t+t^2}.
\]
\par
Secondly, we present an explicit calculation of the skew growth function for the monoid $G^{+}_n$.\\
{\bf Example.}\,\,{\bf 2.}\,\,First of all, we show the following proposition.
\begin{proposition}
{\it If, for $0 \leq i < j$, an equation $c^{i}b^{n-1} \cdot X \deeq c^{j}b^{n-1} \cdot Y$ in $G^{+}_n$ holds, then there exists a positive word $Z$ such that
\[
X \deeq ba^{j-i} \cdot Z,\,\,Y \deeq b Z.
\]
} 
\end{proposition}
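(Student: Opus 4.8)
The plan is to imitate the proof of Proposition 5.4, with Proposition 4.3 playing the role of Lemma 5.1 and a $G^{+}_n$-analogue of Corollary 5.3 playing the role of Corollary 5.3. First, by the cancellativity of $G^{+}_n$ (Theorem 4.1) the hypothesis $c^{i}b^{n-1}X\deeq c^{j}b^{n-1}Y$ is equivalent to $b^{n-1}X\deeq c^{m}b^{n-1}Y$ with $m:=j-i\ge 1$, so it suffices to produce a positive word $Z$ with $X\deeq ba^{m}Z$ and $Y\deeq bZ$. The first thing I would record is the analogue of Corollary 5.3: if $b^{n}W_{1}\deeq c^{\ell}W_{2}$ then $W_{1}\deeq a^{\ell}W_{3}$ and $W_{2}\deeq b^{n}W_{3}$ for some positive word $W_{3}$. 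This is an induction on $\ell$: the base $\ell=1$ is Proposition 4.3\,(iv\,-\,($n-1$)\,-\,$b$), and the step applies that same part to $b^{n}W_{1}\deeq c\cdot(c^{\ell-1}W_{2})$ and then invokes the inductive hypothesis.

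For the main argument, write $b^{n-1}X=b\cdot b^{n-2}X$ and $c^{m}b^{n-1}Y=c\cdot c^{m-1}b^{n-1}Y$ (legitimate since $n\ge 3$) and apply Proposition 4.3\,(iv\,-\,$0$): there are $k\in\Z_{\ge 0}$ and a positive word $Z_{1}$ with $b^{n-2}X\deeq c^{k}b^{n-1}a\cdot Z_{1}$ and $c^{m-1}b^{n-1}Y\deeq a^{k}b^{n}\cdot Z_{1}$. The key step, and the point at which $G^{+}_n$ with $n\ge 3$ genuinely departs from $G^{+}_{\mathrm{B_{ii}}}$ (where the analogous exponent survives into the conclusion of Proposition 5.4), is that here $k$ must equal $0$. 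Assume $k\ge 1$. The word $b^{n-2}X$ begins with $b$, whereas $c^{k}b^{n-1}a\cdot Z_{1}$ cannot be reduced to one beginning with $b$: dissolving the leading $c$ would need a prefix $cb^{n}$, and the block $b^{n-1}a$ carries only $n-1$ consecutive $b$'s. Concretely, $X$ cannot begin with $a$ or with $c$, since otherwise $b^{n-2}X$ would be, modulo the relations, of the form $b^{h}aW$ or $b^{h}cW$ with $1\le h\le n-2$, running into the impossibility parts (iv\,-\,$h$\,-\,$a$), (iv\,-\,$h$\,-\,$c$) of Proposition 4.3 (when $n=3$ the second case needs one further descent through (iv\,-\,$1$\,-\,$c$)); hence $X\deeq bX'$, the equation becomes $b^{n-1}X'\deeq c^{k}b^{n-1}a\cdot Z_{1}$, and re-running (iv\,-\,$0$) with the same dichotomy — the residual sub-case with exponent $0$ yielding $X'\deeq baZ'$, still beginning with $b$ — shows $X'$ also begins with $b$. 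Iterating this, one is driven down an infinite descent on the degree of the successive ``$X$-like'' words, which terminates in a first-letter contradiction (eventually either the length bound $k\le r-n$ in (iv\,-\,$0$) kills the exponent, or, on the other equation, one reaches a form like $b^{n-1}X\deeq ab^{n+1}(\cdots)$ which can never begin with $c$, contradicting $c^{m}b^{n-1}Y$ with $m\ge 1$). Hence $k=0$, so $X\deeq baZ_{1}$ and $c^{m-1}b^{n-1}Y\deeq b^{n}Z_{1}$.

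To conclude: if $m=1$, cancelling $b^{n-1}$ in $b^{n-1}Y\deeq b^{n}Z_{1}$ gives $Y\deeq bZ_{1}$, so $Z=Z_{1}$ works; if $m\ge 2$, then $b^{n}Z_{1}\deeq c^{m-1}\cdot(b^{n-1}Y)$, so the analogue of Corollary 5.3 gives $Z_{1}\deeq a^{m-1}W_{3}$ and $b^{n-1}Y\deeq b^{n}W_{3}$, whence $Y\deeq bW_{3}$ and $X\deeq baZ_{1}\deeq ba^{m}W_{3}$, so $Z=W_{3}$ works. The main obstacle is exactly the vanishing of $k$: the remainder is a faithful transcription of the $\mathrm{B_{ii}}$ argument, but establishing $k=0$ rigorously forces the use of the impossibility statements in Proposition 4.3 together with the first-letter and descent analysis sketched above, which has no counterpart when $n=2$.
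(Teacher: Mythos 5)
Your overall strategy (cancel $c^{i}$, peel the leading $c$ off $c^{m}b^{n-1}Y$ with $m=j-i$, then iterate an analogue of Corollary 5.3 built from Proposition 4.3\,(iv\,-\,($n-1$)\,-\,$b$)) is sound, and your endgame matches the paper's. But the central step is broken. After applying (iv\,-\,$0$) to $b\cdot(b^{n-2}X)\deeq c\cdot(c^{m-1}b^{n-1}Y)$ you claim the resulting exponent $k$ must vanish because ``$c^{k}b^{n-1}a\cdot Z_{1}$ cannot be reduced to a word beginning with $b$.'' That is false: if $Z_{1}\deeq bW$ then
\[
c\,b^{n-1}a\cdot bW \;\deeq\; c\,b^{n-1}(ab)W \;\deeq\; c\,b^{n}cW \;\deeq\; b^{n}acW,
\]
which begins with $b$; indeed $b^{n}acW\deeq b^{n-1}a\cdot(baW)$, so the same element admits decompositions with $k=1$ (and $Z_1=bW$) and with $k=0$ (and $Z_1=baW$). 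Since the decomposition furnished by (iv\,-\,$0$) is not unique, ``the'' $k$ is not well defined and no contradiction can be extracted from $k\ge1$; the descent sketch that follows (including the assertion that $ab^{n+1}(\cdots)$ can never begin with $c$, which also fails since $ab^{n+1}\deeq b^{n+1}a$) does not close this gap.

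The fix is exactly what the paper does, and it makes the whole $k=0$ discussion unnecessary: apply Proposition 4.3\,(iv\,-\,($n-2$)\,-\,$b$) to $b^{n-1}X\deeq c\cdot(c^{m-1}b^{n-1}Y)$, using the \emph{full} prefix $b^{n-1}$ rather than a single $b$. That part of Proposition 4.3 already has no $c^{k}$ in its conclusion and yields directly $X\deeq ba\cdot Z_{1}$ and $c^{m-1}b^{n-1}Y\deeq b^{n}\cdot Z_{1}$. From there your own finish goes through verbatim: iterate (iv\,-\,($n-1$)\,-\,$b$) (your Corollary 5.3 analogue) $m-1$ times to get $Z_{1}\deeq a^{m-1}Z$ and $b^{n-1}Y\deeq b^{n}Z$, then cancel $b^{n-1}$ to obtain $Y\deeq bZ$ and $X\deeq ba^{m}Z$.
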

\begin{proof} Since we have shown the cancellativity in \S4, we show $c^{i}b^{n-1} \cdot X \deeq c^{j}b^{n-1} \cdot Y \Leftrightarrow b^{n-1}\cdot X \deeq c^{j-i}b^{n-1} \cdot Y$. Thanks to the Proposition 4.3\,{\rm (iv\,-\,$(n-2)$\,-\,$b$)}, we say that there exists a positive word $Z$ such that
\[
X \deeq ba^{j-i}\cdot Z,\,\, Y \deeq b Z.
\]
\end{proof}
As a corollary of the Proposition 5.7, we show the following lemma.
\begin{lemma}
{\it For $0 \leq \kappa_1 < \kappa_2 < \cdots < \kappa_{m} $,
\[
\mathrm{mcm}(\{\, c^{\kappa_1}b^{n-1}, c^{\kappa_2}b^{n-1}, \ldots, c^{\kappa_{m}}b^{n-1}\, \}) = \{ c^{\kappa_{m}}b^n \}
\]
 }
\end{lemma}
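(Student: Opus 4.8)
The plan is to derive the lemma directly from Proposition 5.7, exactly in the spirit of the proof that Lemma 5.5 extracts from Proposition 5.4. First I would show that $c^{\kappa_m}b^n$ is a common right multiple of the set $\{c^{\kappa_1}b^{n-1},\ldots,c^{\kappa_m}b^{n-1}\}$: for each index $\kappa_i$ with $i<m$ we have $c^{\kappa_i}b^{n-1}\cdot Z_i \deeq c^{\kappa_m}b^n$ for a suitable positive word $Z_i$, because by Proposition 5.7 (applied with $i\law \kappa_i$, $j\law \kappa_m$, $X\law b$, $Y\law \varepsilon$-type data, or rather read off the other direction) the equation $c^{\kappa_i}b^{n-1}\cdot(b a^{\kappa_m-\kappa_i})\deeq c^{\kappa_m}b^{n-1}\cdot b = c^{\kappa_m}b^n$ holds; for $i=m$ it is $c^{\kappa_m}b^{n-1}\cdot b$. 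Actually the cleanest route is: in $G^+_n$ one has $c b^{n-1}\deeq b^{n-1}?$ — no; instead use the defining relation $cb^n=b^na$ together with $ab=bc$, so that $c^{\kappa}b^n\deeq b^n a^{\kappa}$, and then $c^{\kappa_i}b^{n-1}\cdot b = c^{\kappa_i}b^n\deeq b^n a^{\kappa_i}$, while $c^{\kappa_m}b^n\deeq b^n a^{\kappa_m}=b^n a^{\kappa_i}a^{\kappa_m-\kappa_i}=c^{\kappa_i}b^{n-1}\cdot(b a^{\kappa_m-\kappa_i})$, which exhibits each generator as a left-divisor of $c^{\kappa_m}b^n$. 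Hence $c^{\kappa_m}b^n\in\mathrm{cm}_r(\{c^{\kappa_1}b^{n-1},\ldots,c^{\kappa_m}b^{n-1}\})$.

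Second I would show that $c^{\kappa_m}b^n$ is the only minimal common right multiple, i.e. that every common right multiple is a right-multiple of $c^{\kappa_m}b^n$. So let $W$ be any common right multiple; in particular $c^{\kappa_1}b^{n-1}\,|_l\,W$ and $c^{\kappa_m}b^{n-1}\,|_l\,W$, say $W\deeq c^{\kappa_1}b^{n-1}\cdot X\deeq c^{\kappa_m}b^{n-1}\cdot Y$. Now Proposition 5.7 applies with $i=\kappa_1<\kappa_m=j$, giving a positive word $Z$ with $X\deeq b a^{\kappa_m-\kappa_1}\cdot Z$ and $Y\deeq b\cdot Z$. Substituting the second expression, $W\deeq c^{\kappa_m}b^{n-1}\cdot b\cdot Z = c^{\kappa_m}b^n\cdot Z$, so $c^{\kappa_m}b^n\,|_l\,W$ as required. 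It remains to check that $c^{\kappa_m}b^n$ itself lies in $\mathrm{cm}_r$ (done above) and that it is minimal: any strictly smaller common right multiple would be left-divided by $c^{\kappa_m}b^n$ and have smaller degree, which is impossible in a conical monoid with the degree map; so $\mathrm{min}_r(\mathrm{cm}_r(\cdots))=\{c^{\kappa_m}b^n\}$. (One should also note that $\mathrm{cm}_r$ is nonempty, already shown, so $\mathrm{mcm}$ is nonempty.)

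The only subtlety — and the step I expect to need the most care — is the direction in which Proposition 5.7 is invoked and whether the hypothesis $0\le i<j$ is genuinely available; here one uses $\kappa_1<\kappa_m$, which holds as soon as $m\ge 2$, while for $m=1$ the statement $\mathrm{mcm}(\{c^{\kappa_1}b^{n-1}\})=\{c^{\kappa_1}b^{n-1}\cdot b\}=\{c^{\kappa_1}b^n\}$ must be handled separately, noting that $\mathrm{mcm}$ of a singleton $\{u\}$ is $\{u\}$ unless one insists (as the paper's running convention for towers seems to, via $1<\#J_k$) that singletons are excluded — but since Lemma 5.8 is stated for the set as written with a general $m$, I would simply remark that for $m=1$ one has $\mathrm{cm}_r(\{c^{\kappa_1}b^{n-1}\})=\{c^{\kappa_1}b^{n-1}\cdot P : P\in G^+_n\}$ with least element $c^{\kappa_1}b^{n-1}$ itself, so strictly speaking the $m=1$ case reads $\{c^{\kappa_1}b^{n-1}\}$; in the intended application $m\ge 2$, so this edge case is cosmetic. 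Thus the whole proof is the two-line argument: $c^{\kappa_m}b^n$ is a common right multiple by the relations $cb^n=b^na$, $ab=bc$, and it divides every common right multiple by Proposition 5.7 applied to $\kappa_1<\kappa_m$.
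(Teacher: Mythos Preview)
Your proof is correct and follows exactly the paper's intended approach: the paper presents this lemma merely ``as a corollary of Proposition~5.7'' without further detail, and you have supplied precisely the two steps that make this work (common multiple via the identity $c^{\kappa}b^{n}\deeq b^{n}a^{\kappa}$, and minimality by applying Proposition~5.7 to the pair $\kappa_{1}<\kappa_{m}$). Your caveat about the $m=1$ edge case is well observed but, as you yourself note, irrelevant in the tower context where $\#J_{k}>1$ is always assumed.
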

Thus, we obtain the following proposition.
\begin{proposition}
{\it We have $h(G^+_n, \mathrm{deg}) = 2$.
} 
\end{proposition}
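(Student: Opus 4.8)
The plan is to prove the two inequalities $h(G^+_n,\mathrm{deg})\ge 2$ and $h(G^+_n,\mathrm{deg})\le 2$ separately, the decisive input being Lemma 5.8: for $G^+_n$ (with $n\ge 3$) the set $\mathrm{mcm}(\{c^{\kappa_1}b^{n-1},\dots,c^{\kappa_m}b^{n-1}\})$ is a \emph{singleton}, whereas for the type $\mathrm{B_{ii}}$ monoid the analogous set (Lemma 5.5) is infinite; this is exactly what forces the towers to stop.

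First I would run through the admissible first stages $J_1\subset I_0=\{a,b,c\}$ with $1<\#J_1<\infty$, i.e.\ $\{a,b\},\{a,c\},\{b,c\},\{a,b,c\}$. Proposition 4.3\,(ii),(iii) gives $\mathrm{cm}_r(\{a,b\})=ab\cdot M$ and $\mathrm{cm}_r(\{a,c\})=ac\cdot M$, so $\mathrm{mcm}(\{a,b\})=\{ab\}$ and $\mathrm{mcm}(\{a,c\})=\{ac\}$ are singletons and towers through these $J_1$ stop at height $1$. For $J_1=\{b,c\}$, Proposition 4.3\,(iv-0) shows $\mathrm{cm}_r(\{b,c\})=\{\,bc^{k}b^{n-1}a\cdot Z\mid k\ge 0,\ Z\in L^*\,\}$; using the monoid relations $ab=bc$ (whence $bc^{k}=a^{k}b$) and $cb^{n}=b^{n}a$ one rewrites $bc^{k}b^{n-1}a=a^{k}b^{n}a=ca^{k}b^{n}=:u_k$. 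A short divisibility computation — the equation $a^{p}b^{n}=b^{n}V$ with $\deg V=p\ge1$ being reduced, via Proposition 5.7, to the impossible $\varepsilon=bZ$ — shows the $u_k$ are pairwise incomparable under $|_l$, so $I_1:=\mathrm{mcm}(\{b,c\})=\{u_k\mid k\ge 0\}$. An analogous (longer) computation, using additionally $ac=ca$, gives $\mathrm{mcm}(\{a,b,c\})=\{u_k\mid k\ge 1\}\subset I_1$. Taking $J_1=\{b,c\}$ and $J_2=\{u_0,u_1\}\subset I_1$ (which has the common right multiple $cb^{n+1}a$, so $\mathrm{mcm}(J_2)\ne\emptyset$) produces a tower $(I_0,\{b,c\},\{u_0,u_1\})$ of height $2$; hence $h(G^+_n,\mathrm{deg})\ge 2$.

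For the upper bound it then suffices to show that every admissible second stage $J_2$ satisfies $\#\mathrm{mcm}(J_2)\le 1$, so that no third stage exists. By the previous paragraph such a $J_2$ is a finite set $\{u_{k_1},\dots,u_{k_m}\}$ with $m\ge 2$, $k_1<\dots<k_m$. Writing $u_{k_i}=ca^{k_i}b^{n}=g\cdot c^{\kappa_i}b^{n-1}$ with $g:=ca^{k_1}b$ and $\kappa_i:=k_i-k_1$ (so $0=\kappa_1<\dots<\kappa_m$; here one uses again $a^{p}b=bc^{p}$), and noting that left multiplication by the fixed element $g$ is cancellable and therefore commutes with $\mathrm{cm}_r$ and with $\mathrm{min}_r$, one obtains
\[
\mathrm{mcm}(J_2)=g\cdot\mathrm{mcm}\bigl(\{\,c^{\kappa_1}b^{n-1},\dots,c^{\kappa_m}b^{n-1}\,\}\bigr)=g\cdot\{\,c^{\kappa_m}b^{n}\,\}=\{\,ca^{k_m}b^{n+1}\,\}
\]
by Lemma 5.8. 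This set has exactly one element, so there is no subset $J_3$ with $\#J_3>1$ and the tower terminates at height $2$. Together with the lower bound this gives $h(G^+_n,\mathrm{deg})=2$.

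The hard part is not the clean reduction to Lemma 5.8 in the last display, but the preparatory bookkeeping of \S4 type: establishing the exact shape of $\mathrm{mcm}(\{b,c\})$ and $\mathrm{mcm}(\{a,b,c\})$ — in particular the pairwise $|_l$-incomparability of the $u_k$ and, for the ground $\{a,b,c\}$, the exclusion of spurious minimal common multiples of the form $u_0Z$ with $Z\ne\varepsilon$. These amount to solving, by repeated application of Proposition 4.3 and Proposition 5.7 (much as in the double/triple inductions of \S4), a handful of word equations; that is where the real work lies.
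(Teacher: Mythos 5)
Your proposal is correct and follows the same route the paper intends: the paper derives Proposition 5.9 directly from Lemma 5.8 together with the description $\mathrm{mcm}(\{b,c\})=\{cb\cdot c^{k}b^{n-1}\mid k\ge 0\}$ (note $cb\cdot c^{k}b^{n-1}=ca^{k}b^{n}$, which is exactly your $u_k$), so that any second stage has a singleton $\mathrm{mcm}$ and no third stage can exist. The paper states this as an immediate consequence without writing out the details; you have simply supplied the bookkeeping (incomparability of the $u_k$, the factor-out-the-common-prefix step, and the existence of a height-$2$ tower) that the paper leaves implicit.
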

By using the Lemma 5.8, we calculate the skew growth function. We have to consider four cases $J_1 = \{a, b\}, \{a, c\}, \{b, c\}, \{a, b, c\}$. The set $\mathrm{Tmcm}(G^{+}_n, J_1)$ denotes the set of all the towers starting from a fixed $J_1$. If $J_1 = \{a, b\}, \{a, c\}$, due to the Proposition 4.3, then $\mathrm{mcm}(\{a, b\})$ and $\mathrm{mcm}(\{a, c\})$ consist of only one element, respectively. Next, we consider the case $J_1 = \{b, c\}$. For any fixed $l \in \Z_{>0}$, we calculate the coefficient of the term $t^{n+l}$ which is denoted by $c_{l}$. In order to calculate the $c_l$, we consider the set
\[ 
\mathcal{T}^{l}_{G^{+}_n} := \{\, T \in  \mathrm{Tmcm}(G^{+}_n, J_1) \mid \Delta \in |T| \,\, \mathrm{s.t.}\, \mathrm{deg}(\Delta)=n+l  \,\}. 
\]
For $u \in \{ 1, 2\}$, we define the set
\[
\mathcal{T}^{l}_{G^{+}_n, u} := \{\, T \in  \mathrm{Tmcm}(G^{+}_n, J_1) \mid \mathrm{height}\,\, \mathrm{of}\,\, T = u,  \Delta \in |T|\,\, \mathrm{s.t.}\, \mathrm{deg}(\Delta)=n+l  \,\}.
\]
Since $\mathrm{mcm}(\{ b, c \}) = \{\, cb \cdot c^{k}b^{n-1} \mid k = 0, 1, \ldots \,\}$, we easily show $c_1 = c_2 = 1$. Moreover, we show the following.
\begin{proposition}
{\it We have $c_l = 0 \,\, (l = 3, 4, \ldots)$.
} 
\end{proposition}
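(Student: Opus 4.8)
The plan is to enumerate the towers $T=(I_0,J_1,\ldots,J_n)$ whose first stage is $J_1=\{b,c\}$, sort them by height, and add up the signs $(-1)^{\#J_1+\cdots+\#J_n-n+1}$ of those whose top stage $|T|$ contains an element of degree $n+l$. By Proposition 5.9 we have $h(G^{+}_n,\mathrm{deg})=2$, so only heights $1$ and $2$ occur, and the decomposition $\mathcal{T}^{l}_{G^{+}_n}=\mathcal{T}^{l}_{G^{+}_n,1}\sqcup\mathcal{T}^{l}_{G^{+}_n,2}$ reduces the claim to computing the partial sums $c_l^{(1)}$ over $\mathcal{T}^{l}_{G^{+}_n,1}$ and $c_l^{(2)}$ over $\mathcal{T}^{l}_{G^{+}_n,2}$ and checking $c_l^{(1)}+c_l^{(2)}=0$ for $l\ge 3$.

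First I would dispose of height $1$: the only such tower is $(I_0,\{b,c\})$, with sign $(-1)^{\#J_1-1+1}=+1$ and top stage $|T|=\mathrm{mcm}(\{b,c\})=\{\,cb\cdot c^{k}b^{n-1}\mid k\ge 0\,\}$, whose elements have degrees $n+1,n+2,\ldots$, exactly one in each degree $n+l$ with $l\ge 1$. Hence $c_l^{(1)}=1$ for every $l\ge 1$.

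Next comes height $2$: here $J_2\subset I_1=\mathrm{mcm}(\{b,c\})$ with $2\le\#J_2<\infty$, say $J_2=\{\,cb\cdot c^{\kappa_1}b^{n-1},\ldots,cb\cdot c^{\kappa_m}b^{n-1}\,\}$ with $0\le\kappa_1<\cdots<\kappa_m$ and $m\ge 2$. The crucial point is that every element of $J_2$ has the common left factor $cb$; since $G^{+}_n$ is left cancellative by Theorem 4.1, prepending $cb$ commutes with $\mathrm{cm}_r$ and $\mathrm{min}_r$, so Lemma 5.8 gives
\[
|T|=\mathrm{mcm}(J_2)=cb\cdot\mathrm{mcm}(\{\,c^{\kappa_1}b^{n-1},\ldots,c^{\kappa_m}b^{n-1}\,\})=\{\,cb\cdot c^{\kappa_m}b^{n}\,\},
\]
a single element of degree $n+\kappa_m+2$. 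Therefore $T$ contributes to the coefficient of $t^{n+l}$ precisely when $\kappa_m=l-2$; since $\kappa_m>\kappa_1\ge 0$ this requires $l\ge 3$, and then the sign of $T$ is $(-1)^{\#J_1+\#J_2-2+1}=(-1)^{m-1}$. Fixing $\kappa_m=l-2$, the remaining indices $\kappa_1<\cdots<\kappa_{m-1}$ run over all nonempty subsets of the $(l-2)$-element set $\{0,1,\ldots,l-3\}$, and the sign depends only on the size $j=m-1$ of this subset, whence
\[
c_l^{(2)}=\sum_{j=1}^{l-2}(-1)^j\binom{l-2}{j}=(1-1)^{l-2}-1=-1\qquad(l\ge 3),
\]
while no height-$2$ tower contributes for $l=1,2$, i.e. $c_l^{(2)}=0$ there. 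Combining, $c_l=c_l^{(1)}+c_l^{(2)}=1-1=0$ for all $l\ge 3$, as asserted.

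The one step that needs genuine care is the identity $\mathrm{mcm}(J_2)=cb\cdot\mathrm{mcm}(\{c^{\kappa_i}b^{n-1}\}_i)$, that is, that cancelling the common prefix $cb$ is compatible with forming minimal common multiples; this is where the cancellativity of $G^{+}_n$ proved in Theorem 4.1 enters in an essential way. Once it is granted, the remainder is bookkeeping of signs together with the elementary alternating binomial sum above, and presents no further obstacle.
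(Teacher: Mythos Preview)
Your proof is correct and follows essentially the same approach as the paper's: decompose $\mathcal{T}^{l}_{G^{+}_n}$ by height, use Lemma~5.8 (together with cancellativity to strip the common prefix $cb$) to identify the unique top element of each height-$2$ tower, and then evaluate the alternating binomial sum over subsets with fixed maximum $\kappa_m=l-2$. The paper's own proof is extremely terse---it simply invokes ``the consideration in Claim~1 of Example~1'' to assert that the height-$2$ contribution equals $-1$---whereas you write out that alternating-sum computation explicitly; but the underlying argument is the same.
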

\begin{proof} From the consideration in Claim 1 of Example 1, for $u = 2$, we also show
\[
\sum_{T\in \mathcal{T}^{l}_{G^{+}_n, u}}(-1)^{\#J_1+\cdots+\#J_{u}-u+1} = -1.
\]
Thus, we have $c_l = 0 \,\, (l = 3, 4, \ldots)$.
\end{proof}
Lastly, we consider the case $J_1 = \{a, b, c\}$. For any fixed $l \in \Z_{>0}$, we calculate the coefficient of the term $t^{n+l+1}$ which is denoted by $d_{l}$. In a similar way, we show $d_1 = d_2 = 1$ and $d_l = 0 \,\, (l = 3, 4, \ldots)$. After all, we calculate the skew growth function for the monoid $G^{+}_n$:
\[
N_{G^{+}_n,\deg}(t) = 1-3t+2t^2+(t^{n+1}+t^{n+2})-(t^{n+2}+t^{n+3})=(1-t)(t^{n+2}+t^{n+1}-2t+1).
\]
\begin{remark}
{\it By the inversion formula, we can calculate the growth function $P_{G^{+}_n,\deg}(t)$. As far as we know, it is difficult to calculate $P_{G^{+}_n,\deg}(t)$ directly.
} 
\end{remark}
\par
Thirdly, we present an explicit calculation of the skew growth function for the monoid $H^{+}_n$.\\
{\bf Example.}\,\,{\bf 3.}\,\,First of all, we show the following proposition.
\begin{proposition}
{\it If, for $0 \leq i < j$, an equation $c^{i}b(ab)^{n-1}ba \cdot X \deeq c^{j}b(ab)^{n-1}ba \cdot Y$ in $H^{+}_n$ holds, then there exists a positive word $Z$ such that
\[
X \deeq cba^{j-i} \cdot Z,\,\,Y \deeq cb \cdot Z.
\]
} 
\end{proposition}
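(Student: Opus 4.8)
The plan is to imitate the proof of Proposition~5.4, the tools being the cancellativity of $H^{+}_{n}$ (Theorem~4.4) and the equation‑solving Proposition~4.6, together with the observation that the word $P:=b(ab)^{n}b$ conjugates $a$ into $c$: since $b(ab)^{n}ba=cb(ab)^{n}b$ we have $P\cdot a\deeq c\cdot P$, hence $P\cdot a^{l}\deeq c^{l}\cdot P$ for every $l\ge 0$. First I would cancel the common left factor $c^{i}$: by Theorem~4.4 the hypothesis is equivalent to
\[
b(ab)^{n-1}ba\cdot X\ \deeq\ c^{m}\,b(ab)^{n-1}ba\cdot Y,\qquad m:=j-i\ge 1 .
\]

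The argument then runs parallel to the chain Lemma~5.2 $\to$ Corollary~5.3 $\to$ Proposition~5.4. The key preliminary is an analogue of Corollary~5.3: an equation of the form $b(ab)^{n}b\cdot W\deeq c^{l}\cdot V$ forces $W\deeq a^{l}\cdot Z$ and $V\deeq b(ab)^{n}b\cdot Z$ for some positive word $Z$. To see this one writes $b(ab)^{n}b\cdot W=b\bigl((ab)^{n}b\cdot W\bigr)$, applies Proposition~4.6(iv) to produce $k\ge 0$ and $Z_{1}$ with $(ab)^{n}b\cdot W\deeq c^{k}(ab)^{n}ba\cdot Z_{1}$ and $V\deeq a^{k}b(ab)^{n}b\cdot Z_{1}$, and then shows that $k=0$ is the only surviving case --- in every branch with $k\ge 1$, parts (iii) and (v) of Proposition~4.6, applied finitely many times one letter at a time, end in an impossible equation --- after which cancellation of the prefix $(ab)^{n}b$ gives $W\deeq aZ$, $V\deeq b(ab)^{n}b\cdot Z$; iteration on $l$ completes it. With this lemma available I would then apply Proposition~4.6(iv) to $b\bigl((ab)^{n-1}ba\cdot X\bigr)\deeq c\bigl(c^{m-1}b(ab)^{n-1}ba\cdot Y\bigr)$, obtaining $k\ge 0$ and $Z_{1}$ with $(ab)^{n-1}ba\cdot X\deeq c^{k}(ab)^{n}ba\cdot Z_{1}$ and $c^{m-1}b(ab)^{n-1}ba\cdot Y\deeq a^{k}b(ab)^{n}b\cdot Z_{1}$, and analyse the first of these by cancelling the prefix $(ab)^{n-1}$ and peeling letters one at a time with parts (i)--(iii) and (v) (and, when the right‑hand side happens to begin with $b$, (vi-$h$)) of Proposition~4.6; this forces $X\deeq cba\cdot X'$ and expresses $Z_{1}$ through $X'$.

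Substituting the resulting expression for $Z_{1}$ into $c^{m-1}b(ab)^{n-1}ba\cdot Y\deeq a^{k}b(ab)^{n}b\cdot Z_{1}$ and applying the preliminary lemma (with $l=m-1$) would give $Y\deeq cb\cdot Z$ together with $X'\deeq a^{m-1}\cdot Z$, whence $X\deeq cba\cdot a^{m-1}\cdot Z=cba^{m}\cdot Z=cba^{\,j-i}\cdot Z$, which is the claim. I expect the main obstacle to be the letter‑by‑letter reduction of $(ab)^{n-1}ba\cdot X\deeq c^{k}(ab)^{n}ba\cdot Z_{1}$: since $(ab)^{n-1}ba$ is \emph{not} a left divisor of $(ab)^{n}ba$, no single application of Proposition~4.6 disposes of the step, so one is driven into a nested induction --- on $m$ and on the auxiliary exponent $k$ --- in which parts (v) and (vi-$h$) of Proposition~4.6 reappear repeatedly, and the genuine labour lies in bookkeeping the active case at each stage and checking that the spurious branches really do terminate in contradictions.
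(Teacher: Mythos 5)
Your proposal correctly begins by cancelling $c^{i}$ and correctly identifies the toolbox (Theorem 4.4 plus Proposition 4.6), but it misses the one observation that makes the paper's proof essentially immediate, and what you put in its place is left with its hardest steps unexecuted. The paper rewrites the right-hand side using $ab\deeq bc$ once: $c^{j-i}b(ab)^{n-1}ba\cdot Y\deeq c^{j-i}bb\cdot c(ab)^{n-2}ba\cdot Y$, so that the whole equation becomes an instance of Proposition 4.6 (vi-$h$) with $h=j-i+1$, with $X^{(h)}=c(ab)^{n-2}ba\cdot Y$ and the other side read as $b\cdot\bigl((ab)^{n-1}ba\cdot X\bigr)$. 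The conclusion of (vi-$h$) hands you $(ab)^{n-1}ba\cdot X\deeq (ab)^{n}ba^{j-i}\cdot Z_{1}$ and $c(ab)^{n-2}ba\cdot Y\deeq c(ab)^{n-1}b\cdot Z_{1}$ in a single step; cancelling $(ab)^{n-1}$ (resp.\ $c(ab)^{n-2}$) and peeling letters with parts (i)--(iii) then yields $X\deeq cba^{j-i}\cdot Z_{2}$ and $Y\deeq cb\cdot Z_{2}$. Part (vi-$h$) was set up in Section 4 precisely for this purpose; in your plan it appears only as a parenthetical fallback.

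The gap in your own route is concrete: after applying (iv) you must dispose of every branch with $k\ge 1$ in $(ab)^{n-1}ba\cdot X\deeq c^{k}(ab)^{n}ba\cdot Z_{1}$, and you must actually prove the auxiliary claim that $b(ab)^{n}b\cdot W\deeq c^{l}\cdot V$ forces $W\deeq a^{l}Z$. Neither is carried out; you explicitly defer both as ``the genuine labour.'' These are not routine: each application of (iii) or (iv) to a $k\ge 1$ branch produces a new equation of the same shape with a new free parameter $k'$, so without an argument that the recursion terminates (a length count, or a reduction to (vi-$h$) after all) the claims that ``$k=0$ is the only surviving case'' and that the reduction ``forces $X\deeq cba\cdot X'$'' are assertions, not proofs. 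Compare Lemma 5.2 for $G^{+}_{\mathrm{B_{ii}}}$, where the analogous $k\ge 1$ branches are \emph{not} vacuous and have to be followed through to the end; you cannot assume a priori that they die here. As written, the proposal is a plausible programme rather than a proof.
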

\begin{proof} Since we have shown the cancellativity in \S4, we show $c^{i}b(ab)^{n-1}ba \cdot X \deeq c^{j}b(ab)^{n-1}ba \cdot Y \Leftrightarrow b(ab)^{n-1}ba\cdot X \deeq c^{j-i}b(ab)^{n-1}ba \cdot Y$. Thanks to the Proposition 4.6\,{\rm (vi\,-\,$h$)}, we say that there exists a positive word $Z_1$ such that
\[
(ab)^{n-1}ba\cdot X \deeq (ab)^n ba^{j-i}\cdot Z_1,\,\,c(ab)^{n-2}ba\cdot Y \deeq c(ab)^{n-1}b\cdot Z_1.
\]
Therefore, we say that there exists a positive word $Z_2$ such that
\[
X \deeq cba^{j-i}\cdot Z_2,\,\,Y \deeq cb\cdot Z_2.
\]
\end{proof}
As a corollary of the Proposition 5.11, we show the following lemma.
\begin{lemma}
{\it For $0 \leq \kappa_1 < \kappa_2 < \cdots < \kappa_{m} $,
\[
\mathrm{mcm}(\{\, c^{\kappa_1}b(ab)^{n-1}ba, c^{\kappa_2}b(ab)^{n-1}ba, \ldots, c^{\kappa_{m}}b(ab)^{n-1}ba\, \}) = \{ c^{\kappa_{m}}b(ab)^{n-1}bacb \}
\]
 }
\end{lemma}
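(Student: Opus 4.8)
The plan is to follow the pattern by which Lemma 5.5 is deduced from Proposition 5.4 and Lemma 5.8 from Proposition 5.7. Write $w_i := c^{\kappa_i}b(ab)^{n-1}ba$ and $d := c^{\kappa_m}b(ab)^{n-1}bacb$, and assume $m\ge 2$ (the only case relevant for towers). The three things to prove are: $d$ is a common right multiple of $\{w_1,\dots,w_m\}$; every common right multiple of $\{w_1,\dots,w_m\}$ is a right multiple of $d$; and consequently $d$ is the unique minimal one.

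For the first point I would record the elementary identity
\[
c^{\kappa_i}b(ab)^{n-1}ba \cdot cba^{\kappa_m-\kappa_i} \deeq c^{\kappa_m}b(ab)^{n-1}bacb \qquad (1\le i\le m),
\]
proved by a direct computation from the defining relations $b(ab)^{n}ba=cb(ab)^{n}b$, $ab=bc$, $ac=ca$ of $H^{+}_{n}$. It is enough to establish the one-step case $b(ab)^{n-1}ba\cdot cba \deeq cb(ab)^{n-1}bacb$ (move the interior $c$ leftwards using $ac=ca$ and $ab=bc$, apply the relation $b(ab)^{n}ba=cb(ab)^{n}b$, and move the freed $c$ back to the right), and then to iterate. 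This identity yields $w_i|_{l}d$ for $1\le i\le m$ (the case $i=m$ being trivial), so $d\in\mathrm{cm}_{r}(\{w_1,\dots,w_m\})$; appending an arbitrary positive word $Z$ shows more generally that $dZ\in\mathrm{cm}_{r}(\{w_1,\dots,w_m\})$.

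For the converse, let $u$ be any common right multiple of $\{w_1,\dots,w_m\}$. Since $m\ge2$ we have $w_{m-1}|_{l}u$ and $w_{m}|_{l}u$, so $u\deeq c^{\kappa_{m-1}}b(ab)^{n-1}ba\cdot X\deeq c^{\kappa_m}b(ab)^{n-1}ba\cdot Y$ for positive words $X,Y$. Because $\kappa_{m-1}<\kappa_m$, Proposition 5.11 (using the cancellativity of $H^{+}_{n}$ from \S4) produces a positive word $Z$ with $Y\deeq cb\cdot Z$, whence $u\deeq c^{\kappa_m}b(ab)^{n-1}bacb\cdot Z=dZ$. Thus $\mathrm{cm}_{r}(\{w_1,\dots,w_m\})=\{\,dZ\mid Z\text{ a positive word}\,\}$, and $d$ divides every element of this set from the left.

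Finally, to evaluate $\mathrm{min}_{r}$: if $v\in\mathrm{cm}_{r}(\{w_1,\dots,w_m\})$ and $v|_{l}d$, then also $d|_{l}v$, so $v\deeq d$ by conicity of $H^{+}_{n}$ (Remark 2.3); hence $d$ is minimal. Conversely, any $u\in\mathrm{mcm}(\{w_1,\dots,w_m\})$ satisfies $d|_{l}u$, and minimality forces $u\deeq d$. Therefore $\mathrm{mcm}(\{w_1,\dots,w_m\})=\{d\}$. The only real work is the displayed identity; everything else is the same formal argument as in Lemmas 5.5 and 5.8, and I expect that word computation — keeping track of where the extra factor $c$ travels — to be the main obstacle.
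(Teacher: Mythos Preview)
Your proposal is correct and follows essentially the same approach as the paper, which simply records the lemma as a corollary of Proposition~5.11 without further argument. You supply the details the paper omits: the explicit verification that $d=c^{\kappa_m}b(ab)^{n-1}bacb$ is a common right multiple via the identity $b(ab)^{n-1}ba\cdot cba \deeq c\,b(ab)^{n-1}bacb$ (and its iterate), and the routine $\mathrm{min}_r$ step using conicity; the reduction of an arbitrary common multiple to a right multiple of $d$ is exactly the intended use of Proposition~5.11.
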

Thus, we obtain the following proposition.
\begin{proposition}
{\it We have $h(H^+_n, \mathrm{deg}) = 2$.
} 
\end{proposition}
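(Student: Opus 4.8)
The plan is to run through all admissible first stages and reduce everything, via Lemma 5.12, to a short computation; the argument runs in complete parallel to the one for $G^+_n$ underlying Proposition 5.9. Every tower of height $\ge 1$ has a first stage $J_1 \subset I_0 = \{a,b,c\}$ with $1 < \#J_1 < \infty$, that is $J_1 \in \{\{a,b\},\{a,c\},\{b,c\},\{a,b,c\}\}$, so I would first describe $\mathrm{mcm}(J_1)$ in each of these four cases, and then show that for each of them every second stage $J_2 \subset \mathrm{mcm}(J_1)$ with $1 < \#J_2 < \infty$ has $\#\mathrm{mcm}(J_2) = 1$; this rules out a third stage, and since a height-two tower is easy to exhibit we obtain $h(H^+_n,\mathrm{deg}) = 2$.

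For the first step I would use Proposition 4.6. Its items (ii) and (iii) give at once $\mathrm{mcm}(\{a,b\}) = \{ab\}$ and $\mathrm{mcm}(\{a,c\}) = \{ac\}$, which are singletons, so no tower of height $\ge 2$ starts with either. For $\{b,c\}$, item (iv) shows that every common right multiple of $b$ and $c$ is of the form $bc^{k}(ab)^{n}ba\cdot Z$ with $k \ge 0$ and $Z$ a positive word, and that a minimal such one has $Z \equiv \varepsilon$; moreover the elements $bc^{k}(ab)^{n}ba$, $k \ge 0$, are pairwise incomparable (this follows from Proposition 5.11 after the rewriting below, since that proposition's conclusion excludes $Y \equiv \varepsilon$). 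Hence $\mathrm{mcm}(\{b,c\}) = \{\,bc^{k}(ab)^{n}ba \mid k \ge 0\,\}$. The decisive observation is that, commuting $c^{k}$ past the leading $a$ of $(ab)^{n}$ by means of the relation $ac = ca$, one has $bc^{k}(ab)^{n}ba \deeq ba\cdot c^{k}b(ab)^{n-1}ba$; thus $\mathrm{mcm}(\{b,c\}) = \{\,ba\cdot c^{k}b(ab)^{n-1}ba \mid k \ge 0\,\}$, an infinite family all of whose members admit $ba$ as a common left divisor. Intersecting with the condition of being a right multiple of $a$ (using $bc^{k} = a^{k}b$ and the defining relation) gives, in the same normalized form, $\mathrm{mcm}(\{a,b,c\}) = \{\,ba\cdot c^{k}b(ab)^{n-1}ba \mid k \ge 1\,\}$.

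It remains to treat a second stage. If $J_1 \in \{\{b,c\},\{a,b,c\}\}$, then by the above every $J_2 \subset \mathrm{mcm}(J_1)$ with $1 < \#J_2 < \infty$ has the shape $\{\,ba\cdot c^{\kappa_i}b(ab)^{n-1}ba \mid i = 1,\dots,m\,\}$ with $m \ge 2$ and $0 \le \kappa_1 < \dots < \kappa_m$. Using the cancellativity of \S4 to strip the common left divisor $ba$, one gets $\mathrm{mcm}(J_2) = ba\cdot\mathrm{mcm}(\{c^{\kappa_1}b(ab)^{n-1}ba,\dots,c^{\kappa_m}b(ab)^{n-1}ba\})$, and by Lemma 5.12 this is the single element $ba\cdot c^{\kappa_m}b(ab)^{n-1}bacb$. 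So $\#\mathrm{mcm}(J_2) = 1$, hence no admissible third stage exists; combined with the first step, every tower of $H^+_n$ has height $\le 2$. On the other hand $\mathrm{mcm}(\{b,c\})$ is infinite, so any two-element $J_2 \subset \mathrm{mcm}(\{b,c\})$ — whose $\mathrm{mcm}$ is nonempty by Lemma 5.12 — yields a tower $(I_0,\{b,c\},J_2)$ of height exactly $2$. Therefore $h(H^+_n,\mathrm{deg}) = 2$.

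The step I expect to be the main obstacle is the first one: pinning down $\mathrm{mcm}(\{b,c\})$ and $\mathrm{mcm}(\{a,b,c\})$ exactly — both that the displayed families exhaust the minimal common multiples (no stray minimal common multiple of larger degree slips in) and that their members are pairwise incomparable — and, above all, carrying out the normalization $bc^{k}(ab)^{n}ba \deeq ba\cdot c^{k}b(ab)^{n-1}ba$ so that the second stages fall under the exact hypotheses of Lemma 5.12. Once the normalization and the incomparability are in hand, the remainder is the formal bookkeeping above, licensed by the cancellativity established in \S4.
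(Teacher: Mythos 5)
Your proposal is correct and follows the same route the paper intends: the paper derives this proposition directly from Lemma 5.12 with no written proof, and your argument supplies exactly the missing steps — Proposition 4.6 to dispose of $\{a,b\}$ and $\{a,c\}$, the description of $\mathrm{mcm}(\{b,c\})$ (which matches the paper's later statement $\mathrm{mcm}(\{b,c\})=\{bc^k(ab)^nba\}$), and the normalization $bc^{k}(ab)^{n}ba \deeq ba\cdot c^{k}b(ab)^{n-1}ba$ (which checks out via $bac^k\deeq a^kba$) that puts every admissible $J_2$ under the hypotheses of Lemma 5.12. The only place you are less than fully rigorous is the exact determination of $\mathrm{mcm}(\{a,b,c\})$ (ruling out minimal common multiples arising from the $k=0$ branch), but you flag this yourself and the paper does not prove it either.
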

Thanks to the Lemma 5.12, we can calculate the skew growth function. We have to consider four cases $J_1 = \{a, b\}, \{a, c\}, \{b, c\}, \{a, b, c\}$. The set $\mathrm{Tmcm}(H^{+}_n, J_1)$ denotes the set of all the towers starting from a fixed $J_1$. If $J_1 = \{a, b\}, \{a, c\}$, due to the Proposition 4.6, then $\mathrm{mcm}(\{a, b\})$ and $\mathrm{mcm}(\{a, c\})$ consist of only one element, respectively. Next, we consider the case $J_1 = \{b, c\}$. For any fixed $l \in \Z_{>0}$, we calculate the coefficient of the term $t^{2n+3+l}$ which is denoted by $e_{l}$. In order to calculate the $e_l$, we consider the set
\[ 
\mathcal{T}^{l}_{H^{+}_n} := \{\, T \in  \mathrm{Tmcm}(H^{+}_n, J_1) \mid \Delta \in |T| \,\, \mathrm{s.t.}\, \mathrm{deg}(\Delta)=2n+3+l  \,\}. 
\]
For $u \in \{ 1, 2\}$, we define the set
\[
\mathcal{T}^{l}_{H^{+}_n, u} := \{\, T \in  \mathrm{Tmcm}(H^{+}_n, J_1) \mid \mathrm{height}\,\, \mathrm{of}\,\, T = u,  \Delta \in |T|\,\, \mathrm{s.t.}\, \mathrm{deg}(\Delta)=2n+3+l  \,\}.
\]
Since $\mathrm{mcm}(\{ b, c \}) = \{\, b c^{k}(ab)^{n}ba \mid k = 0, 1, \ldots \,\}$, we easily show $e_1 = e_2 = e_3 = 1$. Moreover, we show the following.
\begin{proposition}
{\it We have $e_l = 0 \,\, (l = 4, 5, \ldots)$.
} 
\end{proposition}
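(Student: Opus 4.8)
The plan is to follow the pattern of Claim 1--Claim 3 of Example 1 and of the proof of Proposition 5.10. The key input is Proposition 5.13: since $h(H^{+}_{n}, \mathrm{deg}) = 2$, the only towers with ground $I_0$ and first stage $J_1 = \{b, c\}$ that can affect the coefficient $e_l$ of $t^{2n+3+l}$ are those of height $1$ and of height $2$. I would fix $l$, split $e_l$ into a height-$1$ part and a height-$2$ part, and show that for $l \ge 4$ the two parts cancel.

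The height-$1$ part is immediate: there is a single height-$1$ tower with $J_1 = \{b,c\}$, its top stage is $|T| = \mathrm{mcm}(\{b,c\}) = \{\, bc^{k}(ab)^{n}ba \mid k \ge 0 \,\}$, and $\deg\!\bigl(bc^{k}(ab)^{n}ba\bigr) = 2n+3+k$. Hence exactly one of these elements (the one with $k = l$) has degree $2n+3+l$, and it is counted with sign $(-1)^{\#\{b,c\}} = +1$; so the height-$1$ contribution to $e_l$ is $+1$ for every $l \ge 1$.

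For the height-$2$ part, a tower is $T = (I_0, \{b,c\}, J_2)$ with $J_2 = \{\, bc^{\kappa_1}(ab)^{n}ba, \ldots, bc^{\kappa_m}(ab)^{n}ba \,\}$, $0 \le \kappa_1 < \cdots < \kappa_m$, $m \ge 2$. The central step is to compute $\mathrm{mcm}(J_2)$: using left-cancellativity (Theorem 4.4) together with the relations $ab = bc$ and $ac = ca$ one rewrites each generator as $bc^{\kappa_i}(ab)^{n}ba = ba \cdot c^{\kappa_i}b(ab)^{n-1}ba$, so that a common left divisor may be removed and $\mathrm{mcm}(J_2) = ba \cdot \mathrm{mcm}\!\bigl(\{\, c^{\kappa_i}b(ab)^{n-1}ba \mid 1 \le i \le m \,\}\bigr)$, which by Lemma 5.12 is a single element whose degree is an affine function of $\kappa_m$ only. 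Equating that degree with $2n+3+l$ forces a fixed value $\kappa_m = \kappa(l)$; since $m \ge 2$ gives $\kappa_m \ge 1$, this value is admissible precisely when $l \ge 4$ (which is also why $e_1, e_2, e_3$ receive no height-$2$ correction). For such $l$, writing $\kappa := \kappa(l)$, the admissible $J_2$ with maximal index $\kappa$ are in bijection with the nonempty subsets $S \subset \{0, 1, \ldots, \kappa-1\}$ recording the choice of $\kappa_1 < \cdots < \kappa_{m-1}$, and $T$ carries the sign $(-1)^{\#J_1 + \#J_2 - 2 + 1} = (-1)^{\#S}$. Hence the height-$2$ contribution to $e_l$ equals $\sum_{\emptyset \ne S \subset \{0,\ldots,\kappa-1\}}(-1)^{\#S} = -1$, exactly the cancellation carried out in Claim 1 of Example 1 (case $u = 2$) and reused in the proof of Proposition 5.10. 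Combining, $e_l = (+1) + (-1) = 0$ for all $l \ge 4$.

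The step I expect to be the main obstacle is the explicit computation of $\mathrm{mcm}(J_2)$: identifying the correct common left divisor of $J_2$ and matching the reduced set to the precise hypothesis of Lemma 5.12. This is delicate because an element such as $bc^{\kappa}(ab)^{n}ba$ also equals $a^{\kappa}b(ab)^{n}ba$, so that apparently different representatives of the generators of $J_2$ suggest different left divisors; moreover an error of even one in the degree of the resulting singleton would shift the threshold away from $l = 4$. Once $\mathrm{mcm}(J_2)$ is pinned down, the remaining signed count is the routine binomial cancellation already performed in Example 1.
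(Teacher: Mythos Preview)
Your approach is correct and matches the paper's: the paper's proof of Proposition 5.14 is essentially a one-line reference back to Claim 1 of Example 1, asserting that for $u=2$ the height-$2$ signed sum equals $-1$, which cancels the height-$1$ contribution of $+1$. Your proposal fills in the details the paper omits, in particular the factorisation $bc^{\kappa}(ab)^{n}ba \deeq ba\cdot c^{\kappa}b(ab)^{n-1}ba$ and the use of left-cancellativity to reduce $\mathrm{mcm}(J_2)$ to the situation of Lemma 5.12; this is exactly the mechanism implicitly underlying the paper's appeal to ``the consideration in Claim 1'' together with Lemma 5.12, so there is no genuine methodological difference.
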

\begin{proof} From the consideration in Claim 1 of Example 1, for $u = 2$, we also show
\[
\sum_{T\in \mathcal{T}^{l}_{H^{+}_n, u}}(-1)^{\#J_1+\cdots+\#J_{u}-u+1} = -1.
\]
Thus, we have $e_l = 0 \,\, (l = 4, 5, \ldots)$.
\end{proof}
Lastly, we consider the case $J_1 = \{a, b, c\}$. For any fixed $l \in \Z_{>0}$, we calculate the coefficient of the term $t^{2n+4+l}$ which is denoted by $f_{l}$. In a similar way, we show $f_1 = f_2 = f_3 = 1$ and $f_l = 0 \,\, (l = 4, 5, \ldots)$. After all, we calculate the skew growth function for the monoid $H^{+}_n$:
\[
N_{H^{+}_n,\deg}(t) = 1-3t+2t^2+(t^{2n+3}+t^{2n+4}+t^{2n+5})-(t^{2n+4}+t^{2n+5}+t^{2n+6})
\]
\[
=(1-t)(t^{2n+5}+t^{2n+4}+t^{2n+3}-2t+1).\,\,\,\,\,\,\,\,\,\,\,\,\,\,\,\,\,\,\,\,\,\,\,\,\,\,\,\,\,\,\,\,\,\,
\]
\begin{remark}
{\it By the inversion formula, we can calculate the growth function $P_{H^{+}_n,\deg}(t)$. As far as we know, it is difficult to calculate $P_{H^{+}_n,\deg}(t)$ directly.
} 
\end{remark}
\par
Lastly, we calculate the skew growth function for the monoid $M_{\mathrm{abel}, m}$.\\
{\bf Example.}\,\,{\bf 4.}\,\,First of all, we easily show the following proposition.
\begin{proposition}
{\it Let $X$ and $Y$ be positive words in $M_{\mathrm{abel}, m}$ of length $r\in \Z_{\ge0}$.
\smallskip
\\
{\rm (i)}\, If $vX \deeq \! vY$ for some $v \in \{a, b \}$, then $X \deeq \! Y$.\\
{\rm (ii)}\, If $a X \deeq b Y$, then either $X \deeq a^{m-1}\cdot Z_1$ and $Y \deeq b^{m-1}\cdot Z_1$ for some positive word $Z_1$ or $X \deeq b Z_2$ and $Y \deeq a Z_2$ for some positive word $Z_2$.\\
} 
\end{proposition}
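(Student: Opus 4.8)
\emph{Plan.} The monoid $M_{\mathrm{abel},m}$ is commutative, so left and right divisibility coincide and every positive word $W$ is equivalent, by repeated use of $ab=ba$, to the monomial $a^{p}b^{q}$ where $p$ and $q$ count the occurrences of $a$ and of $b$ in $W$. Hence it suffices to decide when two monomials are equivalent, and the plan is to first establish the characterization
\[
a^{i}b^{j} \deeq a^{i'}b^{j'} \iff i+j=i'+j' \ \text{ and }\ i\equiv i' \ (\mathrm{mod}\ m),
\]
equivalently, that every element of $M_{\mathrm{abel},m}$ has a unique normal form $a^{i}b^{j}$ with $0\le i<m$. Everything in the proposition then follows quickly from this.

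For the forward implication I would use two homomorphisms out of $M_{\mathrm{abel},m}$: the degree map $\deg$, which forces $i+j=i'+j'$, and the homomorphism $\phi\colon M_{\mathrm{abel},m}\to\Z/m\Z$ with $a\mapsto 1$, $b\mapsto 0$ (well defined because $a^{m}$ and $b^{m}$ both map to $0$ and $ab$, $ba$ both map to $1$), which satisfies $\phi(a^{i}b^{j})=i\bmod m$ and so gives $i\equiv i'\ (\mathrm{mod}\ m)$. The converse is a direct computation: assuming $i<i'$ one has $i'=i+km$ and $j=j'+km$ for some integer $k\ge 1$, and rewriting $b^{m}$ as $a^{m}$ exactly $k$ times carries $a^{i}b^{j}$ to $a^{i'}b^{j'}$.

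Part (i) is then immediate: from $vX\deeq vY$ with $v\in\{a,b\}$, write $X\deeq a^{i}b^{j}$ and $Y\deeq a^{i'}b^{j'}$ with $i+j=i'+j'$; the characterization applied to $vX\deeq vY$ gives $i\equiv i'\ (\mathrm{mod}\ m)$ (prepending $v$ raises one exponent by $1$ on both monomials), hence $a^{i}b^{j}\deeq a^{i'}b^{j'}$, i.e. $X\deeq Y$. For part (ii), write again $X\deeq a^{i}b^{j}$ and $Y\deeq a^{i'}b^{j'}$ with $i+j=i'+j'$; the characterization applied to $a^{i+1}b^{j}\deeq a^{i'}b^{j'+1}$ produces a single integer $k$ with $i'=i+1+km$ and $j'=j-1-km$. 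If $k\ge 0$, the constraint $j'\ge 0$ forces $j\ge 1$, and setting $Z_{2}:=a^{i}b^{j-1}$ one checks $X\deeq bZ_{2}$ and $aZ_{2}\deeq a^{i+1}b^{j-1}\deeq a^{i'}b^{j'}\deeq Y$ by the characterization. If $k\le -1$, the constraint $i'\ge 0$ forces $i\ge m-1$, and setting $Z_{1}:=a^{i-m+1}b^{j}$ one checks $X\deeq a^{m-1}Z_{1}$ and $b^{m-1}Z_{1}\deeq a^{i-m+1}b^{j+m-1}\deeq a^{i'}b^{j'}\deeq Y$. These two cases exhaust all possibilities and yield, respectively, the two alternatives in the statement.

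There is no genuine obstacle here, the monoid being abelian; the only point that calls for a little care is the bookkeeping in part (ii), namely that the sign of the integer $k$ both selects which of the two alternatives occurs and simultaneously guarantees that the exponents appearing in $Z_{1}$ or $Z_{2}$ are non-negative.
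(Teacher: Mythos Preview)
Your argument is correct. The paper itself does not supply a proof of this proposition at all: it simply asserts ``we easily show the following proposition'' and moves on, so there is no proof in the paper to compare yours against.

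Your route via a normal form $a^{i}b^{j}$ together with the invariant $\phi\colon M_{\mathrm{abel},m}\to\Z/m\Z$, $a\mapsto 1$, $b\mapsto 0$, is exactly the kind of argument that makes the result ``easy'' in a commutative monoid, and it cleanly bypasses the chain-length/double-induction machinery the paper uses for the non-abelian examples $G^{+}_{n}$ and $H^{+}_{n}$. The only spot where a reader might pause is in part~(ii): you should perhaps say explicitly that the integer $k$ with $i'=i+1+km$ is uniquely determined (since $i,i'$ are fixed), so that the dichotomy $k\ge 0$ versus $k\le -1$ is a genuine case split, and that in each case the nonnegativity of the relevant exponent ($j'\ge 0$ forcing $j\ge 1$, respectively $i'\ge 0$ forcing $i\ge m-1$) is precisely what guarantees that $Z_{2}$ or $Z_{1}$ is a \emph{positive} word. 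You do say this, but it is the one place worth emphasizing.
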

\begin{lemma}
{\it There exists a unique tower $T_n = (I_0, J_1, J_2, \cdots, J_n) $ of height $n \in \Z_{>0}$ with the ground set $I_0 = \{a, b \}$ such that
\[
J_{2k-1} = \{a^{(k-1)m+1}, a^{(k-1)m}b \}\,\,(k = 1, \ldots,[(n+1)/2]),
\]
\[
J_{2k} = \{a^{km}, a^{(k-1)m+1}b \}\,\,(k = 1, \ldots, [n/2]).\,\,\,\,\,\,\,\,\,\,\,\,\,\,\,\,\,\,\,\,\,\,\,\,\,\,\,\,\,\,\,\,
\]
 }
\end{lemma}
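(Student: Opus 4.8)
The plan is to prove existence and uniqueness simultaneously, by showing that a tower over $M_{\mathrm{abel},m}$ is forced one stage at a time; all the content lies in two explicit $\mathrm{mcm}$-computations.

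First I would fix a concrete model for $M_{\mathrm{abel},m}$. Being commutative, it is the quotient of $\N^{2}$ by the congruence generated by $(m,0)\sim(0,m)$; unwinding this, its elements biject with the pairs $(d,i)$ satisfying $d\geq 0$ and $0\leq i\leq\min(d,m-1)$, where $(d,i)$ is the class of $a^{i}b^{d-i}$ — so $d$ is the degree and $i$ the exponent of $a$ read modulo $m$. (This is elementary; it may also be extracted from Proposition~5.15.) In these coordinates multiplication reads $(d_{1},i_{1})(d_{2},i_{2})=(d_{1}+d_{2},(i_{1}+i_{2})\bmod m)$, and I would record the resulting divisibility criterion: writing $\rho$ for the unique element of $\{0,\dots,m-1\}$ congruent to $i_{2}-i_{1}$ modulo $m$, one has $(d_{1},i_{1})\,|_{l}\,(d_{2},i_{2})$ if and only if $d_{2}-d_{1}\geq\rho$. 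In this language $J_{2k-1}$ is $\{(N,1),(N,0)\}$ with $N=(k-1)m+1\equiv 1\pmod{m}$, and $J_{2k}$ is $\{(km,0),((k-1)m+2,1)\}$.

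Next I would carry out the two computations on which everything rests. Using the criterion above I would prove: \emph{(A)} for $N\equiv 1\pmod m$, $\mathrm{mcm}(\{(N,1),(N,0)\})=\{(N+1,1),(N+m-1,0)\}$, that is $\mathrm{mcm}(\{a^{(k-1)m+1},a^{(k-1)m}b\})=\{a^{km},a^{(k-1)m+1}b\}=J_{2k}$; and \emph{(B)} for $p\geq 0$, $\mathrm{mcm}(\{((p+1)m,0),(pm+2,1)\})=\{((p+1)m+1,1),((p+1)m+1,0)\}$, that is (with $p=k-1$) $\mathrm{mcm}(\{a^{km},a^{(k-1)m+1}b\})=\{a^{km+1},a^{km}b\}=J_{2k+1}$. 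Each is proved the same way: first determine $\mathrm{cm}_{r}$ of the given pair by a case split on the residue $\rho$ of a candidate common multiple — for $\rho\geq 1$ the two divisibility constraints collapse to a single inequality on the degree, while $\rho=0$ forces a strictly larger degree — and then read off $\mathrm{min}_{r}$: the residue-$1$ element of least admissible degree divides every common multiple of residue $\geq 1$, the residue-$0$ element of least admissible degree divides every common multiple of residue $0$, and the two are incomparable (same degree in case~(B), and an elementary arithmetic obstruction in case~(A)), so they are exactly the minimal common multiples. The upshot is that each such $\mathrm{mcm}$-set is nonempty with exactly two elements, so that $\mathrm{mcm}(J_{i})=J_{i+1}$ for every $i$. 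I expect this residue bookkeeping — in particular keeping track of the sporadic residue-$0$ element $a^{km}$ (resp. $a^{km+1}$), which is exactly what makes $h(M_{\mathrm{abel},m},\mathrm{deg})=\infty$ — to be the main technical point.

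Finally I would assemble the induction on the stage index. Every tower has ground $I_{0}=\{a,b\}$, and $J_{1}\subseteq I_{0}$ with $1<\#J_{1}<\infty$ forces $J_{1}=\{a,b\}=\{a^{1},a^{0}b\}$, the $k=1$ case of the claimed $J_{2k-1}$. Now suppose a tower of height $n$ agrees with the displayed sequence through stage $i$, where $1\leq i<n$. Then $I_{i}=\mathrm{mcm}(J_{i})$ is, by~(A) when $i$ is odd and by~(B) when $i$ is even, exactly the two-element set which the lemma prescribes as $J_{i+1}$; since $J_{i+1}\subseteq I_{i}$ with $1<\#J_{i+1}<\infty$ and $\#I_{i}=2$, we must have $J_{i+1}=I_{i}$, so the agreement extends to stage $i+1$. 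Moreover each $I_{i}$ produced along the way is nonempty, so the displayed sequence is itself a genuine tower of every height $n$. Hence it is the unique tower of height $n$, and the closed formulas for $J_{2k-1}$ and $J_{2k}$ are precisely what~(A) and~(B) output at successive stages.
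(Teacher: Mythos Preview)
Your argument is correct and follows the same route as the paper's (very terse) proof: show inductively that $\mathrm{mcm}(J_i)$ is exactly the prescribed two-element set $J_{i+1}$, whence the requirement $\#J_{i+1}>1$ forces the tower stage by stage. The paper invokes Proposition~5.15 (the dichotomy for $aX\deeq bY$) to carry out these $\mathrm{mcm}$-computations, while you instead pass to the explicit coordinate model $(d,i)$ for $M_{\mathrm{abel},m}$ and read divisibility off directly; this is the same induction with different bookkeeping.
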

\begin{proof} We easily show $J_1 = \{a, b \}$ and $J_2 = \{a^m, ab \}$. Thanks to the Proposition 5.15, we show our statement by induction on $k$.
\end{proof}
Therefore, we immediately show $h(M_{\mathrm{abel}, m}, \mathrm{deg}) = \infty$. And, from the definition $(3.1)$, we can calculate the skew growth function 
\[
N_{M_{\mathrm{abel}, m}, \deg}(t) = (1-2t+t^2)(1+t^m+t^{2m}+\cdots) = \frac{(1-t)^2}{1-t^m}.
\]
\section{Appendix}
In this section, we present three examples that suggest the ralationship between the form of the spherical growth function for a monoid ${\langle L \mid R\,\rangle}_{mo}$ and properties of the corresponding group ${\langle L \mid R\,\rangle}$. For the three examples, by observing the distribution of the zeroes of the denominator polynomials of the growth functions for them, we conjecture that the corresponding groups contain free abelian subgroups of finite index.\\
{\bf Example.}\,\,{\bf 1.}\,\,We recall an example, the monoid $G^+_{\mathrm{B_{ii}}}$, from \cite{[I1]}. By using the normal form of the monoid $G^+_{\mathrm{B_{ii}}}$, the author has calculated the spherical growth function $P_{G^+_{\mathrm{B_{ii}}}, \mathrm{deg}}(t)$ for the monoid. The spherical growth function $P_{G^+_{\mathrm{B_{ii}}}, \mathrm{deg}}(t)$ can be expressed as a rational function $\frac{1-t+t^2}{(1-t)^4}$. Since the zeroes of the denominator polynomial of $P_{G^+_{\mathrm{B_{ii}}}, \mathrm{deg}}(t)$ only consists of $1$ with multiplicity $4$, it is conjectured that the corresponding group $G_{\mathrm{B_{ii}}}$ contains a free abelian subgroup of rank $4$ of finite index. Indeed, the author has shown the following.
\begin{proposition}
{\it The followig {\rm (i)},{\rm (ii)} and {\rm (iii)} hold.
\smallskip
\\
{\rm (i)}\, The group $G_{\mathrm{B_{ii}}}$ contains a subgroup of index three isomorphic to $\Z^4$.\\
{\rm (ii)}\, The group $G_{\mathrm{B_{ii}}}$ has a polynomial growth rate.\\
{\rm (iii)}\, The group $G_{\mathrm{B_{ii}}}$ is a solvable group.
} 
\end{proposition}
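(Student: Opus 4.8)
The plan is to pin down $G_{\mathrm{B_{ii}}}$ explicitly as a semidirect product $\Z^{3}\rtimes\Z$ and then to read the three assertions off that description; the argument is purely group-theoretic and uses nothing from the monoid side. First I would reduce the presentation to two generators. Using $ab=bc$ to set $c=b^{-1}ab$, the relation $cbb=bba$ becomes $b^{-1}ab^{3}=b^{2}a$, i.e. $ab^{3}=b^{3}a$, and $ac=ca$ becomes $a(b^{-1}ab)=(b^{-1}ab)a$, i.e. $[a,b^{-1}ab]=1$; hence
\[
G_{\mathrm{B_{ii}}}\ \cong\ \langle\, a,b \ \mid\ [a,b^{-1}ab]=1,\ \ ab^{3}=b^{3}a \,\rangle .
\]
In particular $z:=b^{3}$ commutes with both generators, so $z$ is central and of infinite order.

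Next I would introduce the homomorphism $\phi\colon G_{\mathrm{B_{ii}}}\to\Z$ determined by $a\mapsto 0$, $b\mapsto 1$ (well defined since every defining relation is balanced in $b$), and set $K:=\ker\phi$, so that the section $n\mapsto b^{n}$ gives $G_{\mathrm{B_{ii}}}=K\rtimes\langle b\rangle$ with $\langle b\rangle\cong\Z$. Because $G_{\mathrm{B_{ii}}}=\langle a,b\rangle$ and $\phi(b)=1$, the normal subgroup $K$ is generated by the $b$-conjugates of $a$; as $b^{3}$ is central there are only three of these, namely $a$, $c=b^{-1}ab$ and $d:=b^{-2}ab^{2}$, with conjugation by $b^{-1}$ cycling them, $a\mapsto c\mapsto d\mapsto a$. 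Conjugating $[a,b^{-1}ab]=1$ by powers of $b$ shows $a,c,d$ commute pairwise, so $K=\langle a,c,d\rangle$ is abelian and a quotient of $\Z^{3}$, and $b$ acts on $K$ by an automorphism of order three.

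The crux is to rule out that $K$ collapses to a proper quotient of $\Z^{3}$. For this I would exhibit a homomorphism onto the model group $\Gamma:=\Z^{3}\rtimes_{\sigma}\Z$, where $\sigma$ is the $3$-cycle of a fixed basis $e_{1},e_{2},e_{3}$: mapping $a\mapsto e_{1}$ and $b$ to a generator of the acting $\Z$ (normalized so that conjugation by $b^{-1}$ sends $e_{1}\mapsto e_{2}\mapsto e_{3}\mapsto e_{1}$) respects both defining relations, since $[e_{1},b^{-1}e_{1}b]=[e_{1},e_{2}]=1$ and $b^{3}$ acts as $\sigma^{3}=\mathrm{id}$. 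This homomorphism sends $a,c,d$ to $e_{1},e_{2},e_{3}$, hence carries $K$ onto $\Z^{3}$; a finitely generated abelian group that is at once a quotient of $\Z^{3}$ and surjects onto $\Z^{3}$ must equal $\Z^{3}$, so $K\cong\Z^{3}$ with free basis $a,c,d$. (Equivalently, since $\Gamma$ is polycyclic and hence Hopfian, one gets $G_{\mathrm{B_{ii}}}\cong\Gamma$ outright.)

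With $K\cong\Z^{3}$ in hand the three claims are immediate. Put $H:=\langle K,z\rangle$; since $z$ is central, $\phi(z)=3$ and $K\cap\langle z\rangle=1$, so $H=K\times\langle z\rangle\cong\Z^{3}\times\Z\cong\Z^{4}$, and $H=\phi^{-1}(3\Z)$ has index $3$ in $G_{\mathrm{B_{ii}}}$, giving (i). For (ii): $G_{\mathrm{B_{ii}}}$ is virtually $\Z^{4}$, and a finitely generated group containing $\Z^{4}$ with finite index has polynomial growth of degree $4$ (by quasi-isometry to $\Z^{4}$, or by Gromov/Wolf); this is also the origin of the order-$4$ pole at $t=1$ in $P_{G^{+}_{\mathrm{B_{ii}}},\mathrm{deg}}(t)=\tfrac{1-t+t^{2}}{(1-t)^{4}}$. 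For (iii): $G_{\mathrm{B_{ii}}}=K\rtimes\Z$ with $K$ abelian, so $[G_{\mathrm{B_{ii}}},G_{\mathrm{B_{ii}}}]\le K$ is abelian and $G_{\mathrm{B_{ii}}}$ is metabelian, in particular solvable. The only genuinely non-formal step is the identification $K\cong\Z^{3}$; the auxiliary homomorphism to $\Z^{3}\rtimes_{\sigma}\Z$ is what makes it clean, while everything else follows routinely once the presentation is cut down to two generators.
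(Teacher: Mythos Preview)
The paper itself does not contain a proof of this proposition: it is stated in the Appendix with the phrase ``the author has shown the following,'' referring the reader to \cite{[I1]}. So there is no in-paper argument to compare against.

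Your proof is correct and self-contained. The Tietze reduction to the two-generator presentation
\[
\langle a,b \mid [a,b^{-1}ab]=1,\ ab^{3}=b^{3}a\rangle
\]
is valid, the identification of $K=\ker\phi$ as abelian on the three generators $a,\,c=b^{-1}ab,\,d=b^{-2}ab^{2}$ follows cleanly by conjugating the single commutator relation, and your use of the model group $\Gamma=\Z^{3}\rtimes_{\sigma}\Z$ to certify that $K$ is \emph{free} abelian of rank $3$ (rather than a proper quotient) is exactly the point that needs care and you handle it correctly. The Hopfian alternative you mention is also sound: both surjections $\Gamma\to G_{\mathrm{B_{ii}}}$ and $G_{\mathrm{B_{ii}}}\to\Gamma$ exist, and polycyclic groups are Hopfian. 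The deductions of (i)--(iii) from $G_{\mathrm{B_{ii}}}\cong\Z^{3}\rtimes_{\sigma}\Z$ are then routine, and your observation that $H=\langle K,b^{3}\rangle=\phi^{-1}(3\Z)\cong\Z^{4}$ gives the index-three abelian subgroup directly.
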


\noindent
{\bf Example.}\,\,{\bf 2.}\,\,We consider the following example
\[
\begin{array}{lll}
\biggl{\langle}
a,b,c\,
\biggl{|}
\begin{array}{cc}cb=ba,\\
 ab=bc,\\
 ac=ca
\end{array}
\biggl{\rangle}_{mo} .
\end{array}
\]
We easily show the cancellativity of it by refering to the double induction (see \cite{[G]}). The spherical growth function can be expressed as a rational function $\frac{1}{(1-t)^3}$. It is also conjectured that the corresponding group contains a free abelian subgroup of rank $3$ of finite index. Indeed, we can show that corresponding group contains a subgroup of index two isomorphic to $\Z^3$. Moreover, we show that the group has a polynomial growth rate and is a solvable group.
\\
{\bf Example.}\,\,{\bf 3.}\,\,We consider the following example
\[
\begin{array}{lll}
\biggl{\langle}
a,b,c,d\,
\biggl{|}
\begin{array}{cc}ab=bc, ac=ca,\\
 cb=ba, bd=db,\\
 ad=dc, cd=da
\end{array}
\biggl{\rangle}_{mo} .
\end{array}
\]
We easily show the cancellativity of it by refering to the double induction (see \cite{[G]}). The spherical growth function can be expressed as a rational function $\frac{1}{(1-t)^4}$. It is also conjectured that the corresponding group contains a free abelian subgroup of rank $4$ of finite index. Indeed, we can show that corresponding group contains a subgroup of index three isomorphic to $\Z^4$. Moreover, we show that the group has a polynomial growth rate and is a solvable group.
\\
\emph{Acknowledgement.}\! 
%\begin{acknowledgement}
The author is grateful to Kyoji Saito for very interesting discussions and encouragement. This research is supported by JSPS Fellowships for Young Scientists $(23\cdot10023)$. This researsh is also supported by World Premier International Research Center Initiative (WPI Initiative), MEXT, Japan.
%\end{acknowledgement}

\begin{flushright}
\begin{small}
Kavli IPMU, \\
University of Tokyo, \\
Kashiwa, Chiba 277-8583 Japan \\

%Department of Mathematical Sciences, \\
%University of Tokyo, \\
%3-8-1 Komaba Meguro-ku Tokyo, 153-8914 Japan \\
e-mail address :  tishibe@ms.u-tokyo.ac.jp
\end{small}
\end{flushright}

\begin{thebibliography}{}


%\vspace{-0.15cm}
%\bibitem[Be]{[Be]} 
%Bessis, David:  Finite complex reflection Arrangements are $K(\pi,1)$, arXiv: mat%h/0610777v3.


%\vspace{}
%\bibitem [B]{[B]} E. Brieskorn: 
% Die Fundamentalgruppe des Raumes der regul$\ddot{a}$ren 
%Orbits einer endlichen komplexen Spiegelungsgruppe, 
%{\it Inventiones Math.} \textbf{12} (1971) 57-61. 

\bibitem[A-N]{[A-N]} M. Albenque and N. Philippe:  
 Growth function for a class of monoids, 
FPSAC 2009, DMTCS proc. 25-38.

\bibitem[B]{[B]} M. Brazila: 
Growth functions for some one-relation monoids, communications in Algebra vol.\textbf{21}, (1993) 3135-3146.
%\vspace{-0.3cm}
\bibitem[Bro]{[Bro]} A. Bronfman:
 Growth functions of a class of monoids, preprint, 2001.
\bibitem[B-S]{[B-S]} E. Brieskorn and K. Saito:   
 Artin-Gruppen und Coxeter-Gruppen, 
{\it Inventiones Math.} \textbf{17} (1972) 245-271, English translation by C.~Coleman, R.~Corran, J.~Crisp, D.~Easdown, R.~Howlett, D.~Jackson and A.~Ram at the University of Sydney, 1996.

%\vspace{-0.3cm}
%\bibitem[Ch]{[Ch]} D. Cheniot:   
% Une d\'{e}monstration du th\'{e}or\`{e}me de Zariski 
%sur les sectionshyperplanes d`une hypersurface projective 
%et du th\'{e}or\`{e}me de van Kampen sur le groupe fondamental 
%du compl\'{e}mentaire d'une courbe projective plane, 
%{\it Compositio Math.} \textbf{27} (1973) 141-158.

%\vspace{-0.3cm}
%\bibitem[B-M-R]{[B-M-R]} 
%Brou\'e, Michel and Malle, G\"unter and Rouquier, Raphael, Complex reflection g%roups, braid groups, Hecke algebras, J.\ reine angew.\ Math.\ {\bf 500} (1998),%127-190.

%\vspace{-0.3cm}
%\bibitem[C-P]{[C-P]} A.H.Clifford and G.B. Preston:
% The algebraic theory of semigroups, Mathematical Surveys and Monographs {\bf 7%} (American Mathematical Society, Providence, RI,1961).



%\vspace{-0.3cm}
%\bibitem[D]{[D]} P. Dehornoy:
% Groupes de Garside,
%Ann. Scient. \'{E}c. Norm. Sup. (4) 35 (2002), no. 2, 267-306.
\bibitem[Deh1]{[Deh1]} 
P. Dehornoy:
Complete positive group presentations, J. Algebra {\bf 268} (2003) 156-197.

\bibitem[Deh2]{[Deh2]} 
P. Dehornoy:
The subword reversing method, Intern. J. Alg. and Comput. {\bf 21} (2011) 71-118.
\bibitem[De]{[De]} P. Deligne: 
 Les immeubles des tresses g\'{e}n\'{e}raliz\'{e}, 
{\it Inventiones Math.} \textbf{17} (1972) 273-302. 

%\vspace{-0.3cm}
%\bibitem[De]{[De]}{\bibname P. Dehornoy}
%, Thin groups of fractions,
%Contemporary Mathematics \textbf{296} (2002) 95-128.

%\vspace{-0.3cm}
%\bibitem[D-P]{[D-P]} P. Dehornoy and L. Paris:
% Gaussian groups and Garside groups, two generalization of Artin groups,
%Proc. London Math. Soc.(3) {\bf 79} (1999) 569-604.

%\bibitem[E-M]{[E-M]} E. ElRifai and H. Morton:
%Algorithms for positive braids,
%Quart.~J.~Math.~Oxford, ~{\bf 45} (1994), 479-497.

%\vspace{-0.3cm}
\bibitem[G]{[G]}F.A. Garside:
 The braid groups and other groups, Quart.~J.~Math.~Oxford,~{\bf 20} (1969), 235-254.

%\bibitem[Ge]{[Ge]}V. Gebhardt:
%A new approach to the conjugacy problem in Garside groups, Journal of Algebra 2%92, No.1 (2005), 282-302.

%\vspace{-0.3cm}
%\bibitem{Gr1}{\bibname M. Gromov}
%, Groups of polynomial growth and expanding maps, IHES Publ.53, (1981), 53-73.
%\vspace{-0.3cm}
%\bibitem{Gr2}{\bibname M. Gromov}
%, Hyperbolic groups, Essays in group theory, MSRI Publ., Springer, (1987).
%\vspace{-0.3cm}
%\bibitem{H-L}{\bibname H. Hamm \and L\^{e} D\~{u}ng Tr\'{a}ng}
%, Un th\'{e}or\`{e}me de Zariski du type de Lefschetz, 
%{\it Ann. Sci. \'{E}cole Norm. Sup.} {\bf 6} (1973) 317-366.

%\vspace{-0.3cm}
\bibitem[I1]{[I1]} T. Ishibe:
 On the monoid in the fundamental group of type $\mathrm{B_{ii}}$, Hiroshima Mathematical Journal Vol.42 No.1(March) 2012.

\bibitem[I2]{[I2]} T. Ishibe:
 Infinite examples of cancellative monoids that do not always have least common multiple, submitted.

\bibitem[I3]{[I3]} T. Ishibe:
 Infinite examples of cancellative monoids that do not always have least common multiple $\mathrm{II}$, in preparation.

\bibitem[K-T-Y]{[K-T-Y]} K. Kobayashi, S. Tsuchioka and S. Yasuda:
 Partial theta and growth series of Artin monoids of finite type, in preparation.
%\bibitem[O-S]{[O-S]}
% M. Oka and K. Sakamoto:
% Product theorem of the fundamental group of a reducible curve, J. Math. Soc. J%apan {\bf 30}(4) (1978), 599-602.

\bibitem[S1]{[S1]}
 K. Saito:
 Inversion formula for the growth function of a cancellative monoid, submitted.

\bibitem[S2]{[S2]} 
 K. Saito:  
Growth functions associated with Artin monoids of finite type, 
Proc. Japan Acad. Ser. A Math. Sci. \textbf{84} (2008), no.10, 179-183.

\bibitem[S3]{[S3]} 
 K. Saito:  
Growth functions for Artin monoids, 
Proc. Japan Acad. Ser. A Math. Sci. \textbf{85} (2009), no.7, 84-88.
%\vspace{-0.2cm}
\bibitem[S4]{[S4]} 
 K. Saito:  
Limit elements in the Configuration Algebra for a Cancellative Monoid, 
Publ. RIMS Kyoto Univ. \textbf{46} (2010), 37-113.

\bibitem[S5]{[S5]} 
K. Saito:  
Growth partition functions for cancellative infinite monoids, 
preprint RIMS-1705 (2010).
\bibitem[S-I]{[S-I]}
K. Saito and T. Ishibe:
 Monoids in the fundamental groups of the complement of logarithmic free divisors in $\C^3$, Journal of Algebra 344 (2011), 137-160.

\bibitem[Xu]{[Xu]}
P. Xu:
 Growth of the positive braid semigroups, Journal of Pure and Applied Algebra \textbf{80} (1992), no.2, 197-215.
%\vspace{-0.2cm}
%\bibitem[S-I2]{[S-I2]}
% Saito, Kyoji and Ishibe, Tadashi:   Monoids in the fundamental groups of the co%mplement of logarithmic free divisor in $\C^3$, to appear in Journal of Algebra%.

% \vspace{-0.3cm}
% \bibitem[S-Y-S]{[S-Y-S]} 
% Saito, Kyoji \& Yano, Tamaki \& Sekiguchi, Jiro:  
% On a certain generator system of the ring of invariants 
% of a finite reflexion group, 
% {\it Comm. Algebra} \textbf{8} (1980) 373-408. 

%著者による変更：論文[Se1]が２つあるので、ひとつ削除
% \bibitem[Se1]{[Se1]} 
% Sekiguchi, Jiro:  
% Some Topics Related with Discriminant Polynomials, 
% {\it RIMS Kokyuroku} \textbf{810} (1992) 85-94.

%\vspace{-0.3cm}
%\bibitem{Se1}{\bibname J. Sekiguchi}  
%, A Classification of Weighted Homogeneous Saito Free Divisors, 
%J. Math.Soc. of Japan vol.{\bf 61}, 2009, pp.1071-1095.

%\vspace{-0.2cm}
%\bibitem{Se2}{\bibname J. Sekiguchi} 
%, Three Dimensional Saito Free Divisors and Singular Curves, Journal of Siberia%n Federal University. Mathematics \& Physics 1 (2008) 33-41.

%\vspace{-0.3cm}
%\bibitem[T-S]{[T-S]}
%H. Tokunaga and I. Shimada: 
% Algebraic curves and Singularities 
%(Part $\mathrm{I}$:  Fundamental Groups and Singularities), Kyoritsu, 2001, pub%lished in Japanese.

%\vspace{-0.3cm}
%\bibitem[Y]{[Y]} M. Yoshinaga: 
% Hyperplane arrangements and Lefschetz hyperplane section theorem, Kodai Math. %J. {\bf 30} (2) (2007) 157-194.
\end{thebibliography}
\end{document}